\documentclass{amsart}
\usepackage[utf8]{inputenc}
\usepackage{amsfonts}
\usepackage{hyperref}
\hypersetup{
pdftitle={},
pdfauthor={},
}
\usepackage{breakurl}

\usepackage{amsmath}
\usepackage{xcolor}
\usepackage{amsthm}
\usepackage{pdflscape}
\usepackage{pgfplots}
\usepackage{mathrsfs}
\usepackage{enumitem}
\usepackage{multicol}

\usepackage[capitalise]{cleveref}
\crefformat{equation}{(#2#1#3)}
\crefrangeformat{equation}{(#3#1#4--#5#2#6)}
\crefformat{enumi}{(#2#1#3)}
\crefrangeformat{enumi}{(#3#1#4--#5#2#6)}

\newtheorem{thm}{Theorem}[section]
\newtheorem*{thm*}{Theorem}
\newtheorem*{cla*}{Classification}
\newtheorem*{main*}{Main Result}
\newtheorem{cor}[thm]{Corollary}
\newtheorem*{cor*}{Corollary}
\newtheorem{prop}[thm]{Proposition}
\newtheorem{lem}[thm]{Lemma}

\newtheorem{corollary}[thm]{Corollary}

\newtheorem{obs}[thm]{Observation}

\theoremstyle{definition}
\newtheorem{defn}[thm]{Definition}

\newtheorem{ex}[thm]{Example}

\newtheorem{notn}[thm]{Notation}

\newtheorem{remark}[thm]{Remark}
\newtheorem{appr}[thm]{Approach}

\theoremstyle{remark}
\newtheorem{rmk}[thm]{Remark}

\usepackage{amsmath}	
\usepackage{amssymb}		
\usepackage{amsthm}		
\usepackage{setspace}	
\usepackage{array} 
\usepackage{float}

\usepackage[mathscr]{euscript} 
 \let\mathscr\relax
\usepackage[scr]{rsfso}

\newcommand{\la}{\langle}
\newcommand{\ra}{\rangle}

\newcommand{\ZZ}{\mathbb{Z}}

\newcommand{\F}{\mathbb{F}}

\newcommand{\fX}{\mathfrak{X}}

\newcommand{\fY}{\Upsilon}

\newcommand{\cC}{\mathcal{C}}

\DeclareMathOperator{\coker}{coker}

\newcolumntype{P}[1]{>{\centering\arraybackslash}p{#1}}		            
\usepackage[top=1in,bottom=1in,left=1.25in,right=1in]{geometry}

\newtheorem*{lemma*}{lemma}

\usepackage{graphicx}

\usepackage[all]{xy}
\newcommand{\im}{\hspace{1mm}\text{im}\hspace{1mm}}

\newcommand{\onto}{\twoheadrightarrow}

\DeclareMathOperator{\lcm}{lcm}
\usepackage{tikz-cd}
\usetikzlibrary{cd}
\usepackage{adjustbox}
\usepackage{caption}  
\usepackage{subcaption}  
\usepackage{breakurl}

\usepackage[maxbibnames=9,style=alphabetic]{biblatex}
\newcommand{\Cone}[1]{\operatorname{Cone}\left({#1}\right)}

\newcommand{\FF}{\mathbb{F}}

\newcommand{\GG}{\mathbb{G}}

\newcommand{\KK}{\mathbb{K}}

\DeclareMathOperator{\Img}{Im}

\DeclareMathOperator{\Span}{span}
\DeclareMathOperator{\reg}{reg}
\DeclareMathOperator{\pdim}{pd}

\usepackage{amsaddr} 

\usepackage{orcidlink} 

\title[DG Algebra Structures on Mapping Cones with an Application to Edge Ideals]{DG Algebra Structures on Mapping Cones with an Application to Edge Ideals}

\author{Hugh Geller \orcidlink{0000-0002-4012-6404}}
\address{Center for Naval Analyses, Arlington, Virginia 22201 U.S.A.}
\email{\href{mailto:geller.hugh@gmail.com}{geller.hugh@gmail.com}}

\author{Desiree Martin \orcidlink{0000-0002-9297-582X}}
\address{Mathematics Department, Syracuse University, Syracuse, New York 13244 U.S.A.}
\email{\href{mailto:dmarti02@syr.edu}{dmarti02@syr.edu}}

\author{Matthew Mastroeni \orcidlink{0009-0003-6847-1079}}
\address{Department of Mathematics and Physics, SUNY Polytechnic Institute, Utica, New York 13502 U.S.A.}
\email{\href{mailto:mastromn@sunypoly.edu}{mastromn@sunypoly.edu}}

\author{Henry Potts-Rubin \orcidlink{0000-0002-9864-911X}}
\address{Department of Mathematics, Bowdoin College, Brunswick, Maine 04011 U.S.A.}
\email{\href{mailto:h.pottsrubin@bowdoin.edu}{h.pottsrubin@bowdoin.edu}}

\keywords{differential graded algebra, mapping cone, resolution, edge ideal, suspension} 

\subjclass[2020]{13D02, 05E40, 16E45}

\pgfplotsset{compat=1.18}
\begin{document}

\begin{abstract}
    We generalize a construction of Herzog and Takayama pertaining to differential graded algebra structures on mapping cones.  Via this generalization, we provide the minimal free resolution of the edge ideal of a ``new" graph built from suspension over a vertex cover of an ``old" graph, and we discuss when this process preserves differential graded algebra structure on the respective resolutions.  
\end{abstract}

\maketitle

\section{Introduction}

Imported to commutative algebra by Avramov, Buchsbaum, and Eisenbud, to name but a few, differential graded (dg) algebras first appeared in algebraic topology.  These days, the question of whether the minimal free resolution of a module admits a dg algebra structure is a central one in homological commutative algebra, not least because having a dg algebra on hand has proven to be extremely useful.  Among other applications, these structures play a central role in deformation theory (see, e.g., \cite{Manetti,Iacono}) and provide a framework for many senses of duality, for example, by contributing to a formula for the dualizing module of a Gorenstein ring \cite{DWYER}.    

It is challenging to determine whether or not a minimal free resolution is a dg algebra or module.  Some recent advances include those of \c{S}ega and Sireeshan, who explore these structures when an exact zero divisor is at play \cite{sega}; VandeBogert, who shows that a generalization of the Taylor complex admits the structure of dg algebra \cite{keller}; and three of the authors of this paper, who give a complete classification of trees and cycles with edge ideals minimally resolved by dg algebras \cite{GMP}.  We address edge ideals in the present paper, as well (see Sections \ref{suspensionsection} and \ref{iterativeconstructionsection}).  For example, we show that the minimal free resolution of the edge ideal of a threshold graph admits the structure of a dg algebra (Corollary \ref{threshold}).

The main result of this paper is the following construction, which generalizes a theorem of Herzog and Takayama \cite{mappingcones}.

\begin{main*}[Theorem \ref{thm: dgmodule M}]
    Let $A$ be a graded commutative dg algebra and $M$ be a dg $A$-module with $M_i=0$ for all $i<0$. Suppose there is a homomorphism $\beta: M \to A$ such that the induced homomorphism $\theta: M \otimes M \to M$ given by $\theta(m \otimes n) = m\beta(n) - \beta(m)n$ is nullhomotopic via an $A$-linear homotopy $\mu: M \otimes M \to M$ satisfying $\beta \mu =0$ and:
    \begin{equation}
        \mu(\mu(m \otimes n) \otimes u) = -(-1)^{|m|}\mu(m \otimes \mu(n \otimes u))
    \end{equation}
    for all $m,n,u \in M^{\natural}$. Then $C=\Cone{\beta}$ is a dg algebra with multiplication defined by 
    \[
    (a,m)(b,n) = (ab,mb+(-1)^{|a|}an + (-1)^{|m|}\mu(m \otimes n))
    \]
    for all $(a,m), (b,n)\in C^{\natural}_{\geq 1}$. Moreover, $C$ is graded commutative if $\mu$ satisfies 
    \begin{equation}
        \mu(m \otimes n)= -(-1)^{|m||n|}\mu(n \otimes m)
    \end{equation}
       for all $m,n, \in M^{\natural}$ and strictly commutative if $\mu$ additionally satisfies 
       \begin{equation}
           \mu(m \otimes m) = 0
       \end{equation}
       for all $m \in M^{\natural}$ with $|m|$ even.
\end{main*}


The outline of sections is as follows.  In Section \ref{defs}, we provide the necessary definitions around dg algebras and dg modules, so that in Section \ref{cones} we may extend a result of Herzog and Takayama in \cite{mappingcones} concerning such structure on mapping cones (see Theorems \ref{thm: dgmodule M} and \ref{thm-Betadef}).  We use Theorem \ref{thm: dgmodule M} in Section \ref{apps} to show that the minimal free resolutions of ideals built from complete intersections admit the structure of a strictly commutative dg algebra, and we then adapt Theorem \ref{thm-Betadef} to the setting of edge ideals and suspension (Theorem \ref{suspensionres}).  In particular, we provide an iterative process which constructs infinitely many graphs whose edge ideals are minimally resolved by a strictly commutative dg algebra.   

\section{DG Modules over DG Algebras}\label{defs}

In this section, we collect the terminology on dg algebras and dg modules that we will need throughout the paper. Our notation mostly follows that of Avramov \cite{AvramovIFR}. However, we take the point of view that differential graded objects are algebras and modules first and chain complexes second so that our notation $M^{\natural}$ differs slightly from the usage in \cite{AvramovIFR}. 

Let $R$ be a commutative ring. By a \textbf{graded} $R$-\textbf{algebra}, we mean a $\ZZ$-graded ring $A$ with $A_i=0$ for all $i < 0$ and $A_0=R$. If $M$ is a graded $A$-module, write $|m|=i$ for $m \in M_i$. Write $M^{\natural} = \cup_i M_i$ to denote the set of all homogeneous elements of $M$, and write $M_{\geq i}^{\natural} = \cup_{j \geq i}M_i$ to denote the set of homogeneous elements of degree at least $i$. We say that a graded $R$-algebra $A$ is \textbf{graded commutative} if $(ab)=(-1)^{|a||b|}ba$ for all $a,b \in A^{\natural}$ and \textbf{strictly commutative} if it is graded commutative and $a^2=0$ for all $a\in A^{\natural}$ with $|a|$ odd. We note that every graded commutative algebra is strictly commutative if $R$ does not have characteristic 2.

A \textbf{differential} on a graded $A$-module $M$ is a graded homomorphism $\partial: M \to M$ of degree $-1$ satisfying $\partial^2=0$. If $M$ has a differential, then we may view $M^{\natural}$ as a chain complex of $R$-modules:
\[
M^{\natural}: \hspace{5mm} \cdots\rightarrow M_{i+1} \xrightarrow{\partial} M_{i} \xrightarrow{\partial} M_{i-1} \rightarrow \cdots
\]
A \textbf{differential graded} (dg) $R$-\textbf{algebra} is a graded $R$-algebra $A$ together with a differential $\partial: A \to A$ satisfying the graded Leibniz rule
\[
\partial(ab)=\partial(a)b+(-1)^{|a|}a\partial(b)
\]
for all $a,b\in A^{\natural}$. A \textbf{homomorphism of dg algebras} is a graded homomorphism $\alpha: A \to B$ that is also a chain map. 

Give a dg algebra $A$, a \textbf{left dg $A$-module} is a left graded $A$-module $M$ together with a differential $\partial^M: M \to M$ satisfying 
\[
\partial^M(am) = \partial^A(a)m+(-1)^{|a|}a\partial^M(m)
\]
for all $a\in A^{\natural}$ and $m \in M^{\natural}$. We say that a graded $R$-linear map $\beta: M \to N$ of degree $s$ between left dg $A$-modules is $A$-\textbf{linear} if 
\[
\beta(am)=(-1)^{|a|s}a\beta(m)
\]
for all $m \in M^{\natural}$ and $a\in A^{\natural}$. If, in addition, we have $\beta\partial^M = (-1)^{s}\partial^M\beta$, then $\beta$ is called a \textbf{chain map}. For any left dg $A$-module $M$ and $s \in \ZZ$, the \textbf{shift} $M[s]$ is also a dg module with differential $\partial_{M[s]}=(-1)^{s}\partial^M$ and $A$-module structure given by $a\cdot_{[s]}m=(-1)^{|a|s}am$ for all $a \in A^{\natural}$ and $m \in M^{\natural}$. We note that an $A$-linear map (or chain map) $\beta: M \to N$ of degree $s$ is the same as an $A$-linear (or chain map) $\beta: M \to N[s]$ of degree 0. We will call an $A$-linear chain map of degree 0 a \textbf{homomorphism of dg modules}.

While the notion of a left dg module would be sufficient for our purposes, it will be convenient to also introduce right dg modules, as the sign convention for right modules is simpler. A \textbf{right dg $A$-module} is a right graded $A$-module $U$ together with a differential $\partial_U: U \to U$ satisfying
\[
\partial_U(ua) = \partial_U(u)a+(-1)^{|u|}u\partial^A(a)
\]
for all $a\in A^{\natural}$ and $u \in U^{\natural}$. When $A$ is graded commutative, then every left dg $A$-module $M$ also has the structure of a right dg $A$-module given by 
\[
ma = (-1)^{|a||m|}am
\]
for all $a \in A^{\natural}$ and $m \in M^{\natural}$, and conversely, every right module has a canonical left module structure. We say that a graded $R$-linear map $\beta: U \to V$ of degree $s$ between right dg $A$-modules is \textbf{$A$-linear} if 
\[
\beta(ua)=\beta(u)a
\]
for all $u \in U^{\natural}$ and $a \in A^{\natural}$. For any right dg $A$-module $U$ and $s\in \ZZ$, the \textbf{shift} $U[s]$ is also a dg module with differential $\partial_{U[s]} = (-1)^{s}\partial_U$ and $A$-module structure given by $u\cdot_{[s]}a = ua$ for all $u \in U^{\natural}$ and $a \in A^{\natural}$.

When $A$ is graded commutative and $M$ and $N$ are both dg $A$-modules, the \textbf{tensor product} of $M$ and $N$ is the dg $A$-module $M\otimes N$ obtained as a quotient of $M\otimes_R N$ by the subcomplex generated by the elements
\[
ma\otimes n - m \otimes an
\]
for all $a \in A^{\natural}$, $m\in M^{\natural}$ and $n \in N^{\natural}$. The $A$-module structure on $M\otimes N$ is given by 
\[
(m\otimes n)a = m\otimes na = (-1)^{|a||n|}m \otimes an = (-1)^{|a||n|}ma \otimes n.
\]
An $R$-bilinear map $\nu: M \times N \to U$ of degree $s$ between dg $A$-modules is $A$-\textbf{bilinear} if 
\[
\nu(m,na)=\nu(m,n)a = (-1)^{|a||n|}\nu(ma,n).\]
We note that the latter equality is equivalent to 
\[
\nu(am,n) = (-1)^{|a|s}a\nu(m,n).
\]

We end this section with two classic examples of strictly commutative dg algebras: the Taylor resolution and the Koszul complex.  

\begin{ex}[\cite{taylor, Gemeda}]\label{taylordef}
     Let $I$ be a monomial ideal of $Q=\Bbbk[x_1,\ldots,x_n]$ generated by the monomials $u_1, \ldots, u_t$, and let $<$ be a total order on these generators.  For $U \subseteq \{u_1,\ldots,u_t\}$, set $\lcm U=\lcm\{u_j \mid u_j \in U\}$.  The \textit{Taylor resolution} $\mathbb{T}$ of $Q/I$ is the $Q$-free resolution with
     \[
     \mathbb{T}_i=Q^{\binom{t}{i}},
     \]
     which has basis $g_U$, where $U$ is a subset of $\{u_1,\ldots,u_t\}$ of size $i$.  The differential of $\mathbb{T}$ is given by
     \[
     \partial(g_U)=\sum_{u \in U} (-1)^{\sigma(u,U)}\dfrac{\lcm_U}{\lcm_{U\backslash\{u\}}}g_{U \backslash\{u\}},
     \]
     where $\sigma(u,U)=|\{v \in U : v<u\}|$.  The Taylor resolution is often far-from-minimal, but it always admits the structure of strictly graded commutative dg algebra under the multiplication
     \[
g_V\cdot g_W=\begin{cases}
    (-1)^{\sigma(V,W)}\dfrac{\lcm_V\lcm_W}{\lcm_{V \cup W}}g_{V \cup W},& V \cap W=\emptyset,\\0,&V \cap W \neq \emptyset,
\end{cases}
\]
where $\sigma(V,W)=|\{(v,w)\in V \times W : v>w\}|$.  It is worth noting that any two total orders on the generators of $I$ induced isomorphic Taylor resolutions.      
\end{ex}

\begin{ex}
    The Koszul complex $\mathbb{K}$ of an ideal $I=(r_1,\ldots,r_n)$ of a commutative ring $R$ is the complex with free $R$-module $R^{{n \choose k}}$ in degree $k$, with basis elements $e_V$, where $V$ is a subset of $\{r_1,\ldots,r_n\}$ of size $k$.  The differential $\partial:R^{{n \choose k}} \to R^{{n \choose k-1}}$ is given by
    \[
    \partial(e_{\{r_{i_1},\ldots,r_{i_k}\}})=\sum_{j=1}^k(-1)^{j-1}r_{i_j}e_{\{r_{i_1},\ldots,r_{i_k}\}\smallsetminus\{r_{i_j}\}}.
    \]
   The Koszul complex is a strictly commutative dg algebra, and one can see this by viewing it as the exterior algebra $\bigwedge R^n$.  In particular, $e_V\cdot e_W=(-1)^{\sigma(V,W)}e_W \cdot e_V$.  Finally, we recall that if $r_1,\ldots,r_n$ is a regular sequence, then $\mathbb{K}$ minimally resolves $R/I$.  
\end{ex}

\section{DG Algebra Structures on Mapping Cones Via Idealization}\label{cones}

In \cite{mappingcones}, generalizing the notion on Nagata idealization  (or trivial extension) of a module over a commutative ring, Herzog and Takayama introduce the idealization  $A \ltimes M$ of a dg module $M$ over a graded commutative dg algebra $A$ admitting a homomorphism of complexes $\beta: M \to A$ satisfying the condition
\begin{equation}
\label{betamap}
    m \beta(n) = \beta(m)n
\end{equation}
 for all $m,n \in M^{\natural}$. Specifically, they show that $A \ltimes M$ is the mapping cone of $\beta$ together with a multiplication defined by 
\[
(a,m)(b,n) = (ab, mb + (-1)^{|a|}an)
\]
for all $(a,m)$, $(b,n) \in (A\ltimes M)^{\natural}$, and this generalizes an earlier construction of Avramov and Levin \cite{levin} in the case where $M$ is a dg ideal of $A$. The dg algebra structures of Koszul complexes, the Taylor resolution of a monomial ideal, and the minimal free resolution of Northcott ideals all arise via (possibly repeated) idealization \cite[Examples 3.5, 3.6, 3.7]{mappingcones}. In this section, we generalize Herzog and Takayama's construction by showing how to give a dg algebra structure on Cone($\beta$) when (\ref{betamap}) fails.

\begin{lem}
    Let $A$ be a graded commutative dg algebra and $M$ be a dg $A$-module with $M_i = 0$ for all $i < 0$ admitting a homomorphism $\beta:M\to A$. Then there is an induced homomorphism $\theta: M \otimes M \to M$ determined by 
    \[
    \theta(m \otimes n)= m \beta(n) - \beta(m)n
    \]
    for all $m,n \in M^{\natural}$.
\end{lem}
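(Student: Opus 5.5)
We prove the lemma in three steps. First we show that the formula gives a well-defined $A$-linear map on $M\otimes M$. Then we check that it has degree $0$. Finally we check that it commutes with the differentials, so it is a homomorphism of dg modules in the sense of Section \ref{defs}.

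\textbf{Step 1: well-definedness.} Write $M\otimes M$ as the quotient of $M\otimes_R M$ described in Section \ref{defs}. The assignment $(m,n)\mapsto m\beta(n)-\beta(m)n$ is $R$-bilinear, so it induces a map on $M\otimes_R M$. It therefore suffices to show that this map kills every generator $ma\otimes n - m\otimes an$. Since $\beta$ is a homomorphism of dg modules (degree $0$, $A$-linear), we have $\beta(an)=a\beta(n)$ and $\beta(ma)=\beta(m)a$. Then
\[
ma\beta(n) - \beta(ma)n = ma\beta(n)-\beta(m)an = m\beta(an)-\beta(m)(an),
\]
which is exactly the value on $m\otimes an$. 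Here we use associativity of the actions and the fact that $A$ is graded commutative, so the right and left structures agree with the stated signs.

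\textbf{Step 2: degree and $A$-linearity.} $\theta$ has degree $0$ because $\beta$ does, so $|m\beta(n)|=|m|+|n|$. For $A$-linearity, recall $(m\otimes n)a=m\otimes na$. Then
\[
\theta(m\otimes na)=m\beta(n)a-\beta(m)na=\theta(m\otimes n)a.
\]
The hypothesis $M_i=0$ for $i<0$ is only needed so that all these objects live in the graded setting of Section \ref{defs}.

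\textbf{Step 3: compatibility with differentials.} The differential on the tensor product is
\[
\partial(m\otimes n)=\partial m\otimes n+(-1)^{|m|}m\otimes\partial n.
\]
We expand $\partial(m\beta(n))$ and $\partial(\beta(m)n)$ using the Leibniz rules for the right and left module structures, together with $\partial\beta=\beta\partial$. This gives
\[
\partial(m\beta(n))=\partial m\,\beta(n)+(-1)^{|m|}m\beta(\partial n),
\]
\[
\partial(\beta(m)n)=\beta(\partial m)n+(-1)^{|m|}\beta(m)\partial n,
\]
where we used $|\beta(m)|=|m|$. Subtracting the second line from the first and regrouping yields $\theta(\partial m\otimes n)+(-1)^{|m|}\theta(m\otimes\partial n)$, as required.

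The only real care needed in this argument is sign bookkeeping between the left and right module structures. In Step 1 specifically, we must confirm that $\beta(ma)=\beta(m)a$ follows from left $A$-linearity of the degree-$0$ map $\beta$ via $ma=(-1)^{|a||m|}am$. It does, since the sign is the same on both sides.
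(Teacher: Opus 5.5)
Your proposal is correct and follows the paper's proof. The paper also notes that $(m,n)\mapsto m\beta(n)-\beta(m)n$ is $A$-bilinear; it calls this easily seen, whereas you verify it explicitly, including the sign check $\beta(ma)=\beta(m)a$. It then runs the same Leibniz-rule computation, using $\beta\partial=\partial\beta$ and $|\beta(m)|=|m|$, to get $\theta\partial^{M\otimes M}=\partial^M\theta$. One detail is worth adding in Step 1: killing the generators $ma\otimes n-m\otimes an$ is enough because their $R$-span is already closed under the differential. Alternatively, your Step 3 shows that the kernel of the induced map on $M\otimes_R M$ is a subcomplex, which gives the same conclusion.
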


\begin{proof}
    It is easily seen that the assignment $(m,n) \mapsto  m \beta(n) - \beta(m)n $ is $A$-bilinear for all $m,n \in M^{\natural}$ so that there is a well-defined graded $A$-linear map $\theta: M \otimes M \to M$. To see that $\theta$ is a homomorphism of dg modules, we note that 
    \begin{align*}
        \theta \partial^{M\otimes M}(m \otimes n) &= \theta (\partial^M(m)\otimes n + (-1)^{|m|} m \otimes \partial^M(n))\\
        & = \partial^M(m)\beta(n)-\beta\partial^M(m)n+(-1)^{|m|}m\beta\partial^M(n) - (-1)^{|m|}\beta(m)\partial^M(n)\\
        &=\partial^M(m)\beta(n)+ (-1)^{|m|}m \partial^M \beta(n) - \partial^M \beta(m)n - (-1)^{|m|}\beta(m)\partial^M(n)\\
        &=\partial^M(m\beta(n))-\partial^M(\beta(m)n) = \partial^M\theta(m\otimes n)
    \end{align*}
    for all $m,n \in M^{\natural}$.
\end{proof}
Herzog and Takayama construct a dg algebra structure on the mapping cone of $\beta: M \to A$ under the assumption that the induced homomorphism $\theta: M \otimes M \to M$ is zero. We weaken this assumption to the case where $\theta$ is nullhomotopic.

\begin{thm}
\label{thm: dgmodule M}
    Let $A$ be a graded commutative dg algebra and $M$ be a dg $A$-module with $M_i=0$ for all $i<0$. Suppose there is a homomorphism $\beta: M \to A$ such that the induced homomorphism $\theta: M \otimes M \to M$ given by $\theta(m \otimes n) = m\beta(n) - \beta(m)n$ is nullhomotopic via an $A$-linear homotopy $\mu: M \otimes M \to M$ satisfying $\beta \mu =0$ and:
    \begin{equation}
    \label{twisting data}
        \mu(\mu(m \otimes n) \otimes u) = -(-1)^{|m|}\mu(m \otimes \mu(n \otimes u))
    \end{equation}
    for all $m,n,u \in M^{\natural}$. Then $C=\Cone{\beta}$ is a dg algebra with multiplication defined by 
    \[
    (a,m)(b,n) = (ab,mb+(-1)^{|a|}an + (-1)^{|m|}\mu(m \otimes n))
    \]
    for all $(a,m), (b,n)\in C^{\natural}_{\geq 1}$. Moreover, $C$ is graded commutative if $\mu$ satisfies 
    \begin{equation}
    \label{gradedcommutative condition}
        \mu(m \otimes n)= -(-1)^{|m||n|}\mu(n \otimes m)
    \end{equation}
       for all $m,n, \in M^{\natural}$ and strictly commutative if $\mu$ additionally satisfies 
       \begin{equation}
       \label{strictly commutative condition}
           \mu(m \otimes m) = 0
       \end{equation}
       for all $m \in M^{\natural}$ with $|m|$ even.
\end{thm}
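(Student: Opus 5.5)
Set $C=\Cone{\beta}$ with $C_i=A_i\oplus M_{i-1}$, so an element $(a,m)$ has $|a|=|m|+1$, and take the differential $\partial^C(a,m)=(\partial a+\beta(m),-\partial m)$. The sign convention is the one used by Herzog and Takayama, and I will adjust it if the signs below demand it. The plan is to check three things directly: that the product is well defined and bilinear, that it is associative, and that it satisfies the Leibniz rule. Then I verify the commutativity statements.

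\textbf{Bilinearity, unit and grading.} Because $\mu$ has degree $+1$ as a homotopy $M\otimes M\to M$, the term $\mu(m\otimes n)$ lies in $M_{|m|+|n|+1}$. This is exactly the $M$-component of $C_{|a|+|b|}$, so the product is homogeneous. The element $(1,0)$ is a unit. $R$-bilinearity is clear from the formula.

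\textbf{Associativity.} I expand both $((a,m)(b,n))(c,u)$ and $(a,m)((b,n)(c,u))$. The $A$-components are $abc$ on both sides. The $M$-components split into three kinds of terms.
\begin{enumerate}
\item Terms with no $\mu$. These are $mbc$, $\pm anc$, $\pm abu$, and they match exactly as in the Herzog--Takayama idealization.
\item Terms with one $\mu$ and one element of $A$, for example $\mu(mb\otimes u)$ against $\mu(m\otimes bu)$, and $\mu(m\otimes n)c$ against $\mu(m\otimes nc)$. These match by the $A$-bilinearity of $\mu$, with the degree-$1$ sign rule $\mu(am\otimes n)=(-1)^{|a|}a\mu(m\otimes n)$.
\item Terms with two $\mu$'s. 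These are exactly the ones controlled by \eqref{twisting data}, once the sign $(-1)^{|\mu(m\otimes n)|}=(-1)^{|m|+|n|+1}$ is tracked against $(-1)^{|m|}(-1)^{|n|}$.
\end{enumerate}
The condition $\beta\mu=0$ does not enter associativity; it is needed for the Leibniz rule.

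\textbf{Leibniz rule.} I compute $\partial^C((a,m)(b,n))$ and compare it with $\partial^C(a,m)(b,n)+(-1)^{|a|}(a,m)\partial^C(b,n)$.
\begin{itemize}
\item In the $A$-component, the left side gives $\partial(ab)+\beta(mb)+(-1)^{|a|}\beta(an)+(-1)^{|m|}\beta\mu(m\otimes n)$. The last term vanishes by $\beta\mu=0$. The rest matches the right side, using that $\beta$ is $A$-linear.
\item In the $M$-component, the left side produces $-\partial\mu(m\otimes n)$. The right side produces $\mu(\partial m\otimes n)$ and $\pm\mu(m\otimes\partial n)$, together with extra terms $\beta(m)n$ and $\pm m\beta(n)$. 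These extra terms come from multiplying $\beta(m)\in A$ against $n$, and $m$ against $\beta(n)$.
\item The homotopy identity $\partial\mu+\mu\partial=\theta$ (with whatever sign convention makes it consistent), where $\theta(m\otimes n)=m\beta(n)-\beta(m)n$, converts the $\mu$-terms into exactly the $\beta$-terms needed.
\item The terms linear in $A$ match by the dg-module axioms and by $\beta$ being a chain map.
\end{itemize}
The main obstacle is the sign bookkeeping in this step and in the double-$\mu$ terms of associativity. The factor $(-1)^{|m|}$ in front of $\mu$ is presumably chosen precisely to make $\mu\partial^{M\otimes M}+\partial\mu=\theta$ line up with the cone differential. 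I will first fix the sign convention for a degree-$1$ homotopy and then verify the identity on generators.

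\textbf{Commutativity.} For graded commutativity I compare $(a,m)(b,n)$ with $(-1)^{|a||b|}(b,n)(a,m)$. The terms not involving $\mu$ agree because $A$ is graded commutative and $M$ carries the induced right-module structure. The $\mu$ terms agree exactly when $(-1)^{|m|}\mu(m\otimes n)=(-1)^{(|m|+1)(|n|+1)+|n|}\mu(n\otimes m)$, which simplifies to \eqref{gradedcommutative condition}.

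For strict commutativity, let $|(a,m)|$ be odd, so $|m|$ is even. Then
\[
(a,m)^2=(a^2,\,ma-an\pm\mu(m\otimes m)).
\]
Here $a^2=0$ by strict commutativity of $A$, and $ma=an$ with $n=m$ in the appropriate sense, so $ma-am$ cancels by the right-module convention. The remaining term vanishes by \eqref{strictly commutative condition}.
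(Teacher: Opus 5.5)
Your proposal is correct and follows essentially the same route as the paper. Both proofs check associativity by direct expansion (single-$\mu$ terms via $A$-linearity of $\mu$, double-$\mu$ terms via the twisting condition) and the Leibniz rule via $\beta\mu=0$ and the homotopy identity; the convention you left open is the standard one, $\theta=\partial\mu+\mu\,\partial^{M\otimes M}$ with $\partial^{M\otimes M}(m\otimes n)=\partial m\otimes n+(-1)^{|m|}m\otimes\partial n$, and it does match the cone differential $\partial^C(a,m)=(\partial a+\beta(m),-\partial m)$. The commutativity claims then follow, as you say, by comparing coefficients.

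One caveat you share with the paper: the strict-commutativity step needs $a^2=0$ for odd $a\in A$, i.e.\ that $A$ itself is strictly commutative. This is not among the stated hypotheses and cannot be dropped (take $M=0$ over a ring of characteristic $2$), so that assumption should be made explicit.
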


\begin{rmk}
    The Nagata idealization of a module over a commutative ring is an example of a square-zero extension (see, e.g., \cite{extensions}).  Under the above multiplication, $C$ is a square-zero extension of $A$ by $M$ precisely when one of the following equivalent conditions holds: (i) $\mu$ is identically zero, (ii) $m\beta(n)=\beta(m)n$ for all $m,n, \in M^\natural$, (iii) $C$ is precisely the idealization $A \ltimes M$ in \cite{mappingcones}.  
\end{rmk}

We call the dg algebra defined by the preceding proposition the \textbf{twisted idealization} of $M$ by $\beta$ and $\mu$ and denote it by $A \ltimes_{\beta, \mu}M$. We also refer to the pair $\beta, \mu$ of a homomorphism of dg modules $\beta: M  \to A$ and homotopy $\mu: M \otimes M \to M$ satisfying condition (\ref{twisting data}) of the preceding theorem as \textbf{twisting data} for $M$. 

\begin{proof}[Proof of Theorem \ref{thm: dgmodule M}]
    Let $(a,m), (b,n), (c,u) \in C_{\geq 1}$. It is easily verified that the multiplication in the proposition (together with the natural $R$-module structures) satisfies the conditions for $C$ to be a graded $R$-algebra except possibly associativity. Before we begin, recall that $|a|=|m|-1$ and $|b|=|n|-1$ as we will use these regularly. To check that the multiplication of $C$ is associative, we compute:
    \begin{align*}
        [(a,m)(b,n)](c,u)&= (ab, mb+(-1)^{|a|}an +(-1)^{|m|}\mu(m \otimes n))(c,u)\\
        & = (abc, mbc+(-1)^{|a|}anc + (-1)^{|m|}\mu(m \otimes n)c +(-1)^{|a|+|b|}abu\\
        &\hspace{8mm}+(-1)^{|m|+|b|}\mu(mb\otimes u) + (-1)^{|m|+|b|+|a|}\mu(an\otimes u) + (-1)^{|b|}\mu(\mu(m\otimes n) \otimes u))\\
         & = (abc, mbc+(-1)^{|a|}anc + (-1)^{|m|}\mu(m \otimes n)c +(-1)^{|a|+|b|}abu\\
        &\hspace{8mm}+(-1)^{|m|+|b|}\mu(mb\otimes u) + (-1)^{|n|}\mu(an\otimes u) + (-1)^{|b|}\mu(\mu(m\otimes n) \otimes u)).\\
    \end{align*}
    On the other hand,
     \begin{align*}
        (a,m)[(b,n)(c,u)]&= (a,m)(bc, nc+(-1)^{|b|}bu +(-1)^{|n|}\mu(n \otimes u))\\
        & = (abc, mbc+(-1)^{|a|}anc + (-1)^{|a|+|b|}abu + (-1)^{|n|+|a|}a\mu(n\otimes u)\\
        &\hspace{8mm}+(-1)^{|m|}\mu(m \otimes nc) +(-1)^{|m|+|b|}\mu(m\otimes bu) + (-1)^{|m|+|n|}\mu(m \otimes\mu(n\otimes u)))\\
         & = (abc, mbc+(-1)^{|a|}anc + (-1)^{|a|+|b|}abu + (-1)^{|n|}\mu(an\otimes u)\\
        &\hspace{8mm}+(-1)^{|m|}\mu(m \otimes n)c +(-1)^{|m|+|b|}\mu(mb\otimes u) + (-1)^{|n|-1}\mu(\mu(m\otimes n) \otimes u))),\\
    \end{align*}
    showing that the multiplication is associative since $\mu$ is $A$-linear and satisfies condition (\ref{twisting data}).

    To see that $C$ is a dg algebra under this multiplication, we compute:
    \begin{align*}
        \partial^C((a,m)(b,n)) & = \left(\partial^A(a) + \beta(m), -\partial^M(m)(b,n) \right)\\
        & = (\partial^A(a)b  +\beta(m)b,\\
        &\hspace{15mm}- \partial^M(m)b  - (-1)^{|a|}\partial^A(a)n - (-1)^{|a|}\beta(m)n + (-1)^{|m|}\mu(\partial^M(m) \otimes n))
    \end{align*}
    and 
    \begin{align*}
        (-1)^{|a|}(a,m)\partial^C(b,n) &= (-1)^{|a|}(a,m){\partial^A(b)+\beta(n),-\partial^M(n)}\\
        & = ((-1)^{|a|}a\partial^A(b)+(-1)^{|a|}a\beta(n),\\
        &\hspace{15mm} - (-1)^{|m|}m\partial^A(b)-(-1)^{|m|}m\beta(n)-a\partial^M(n)
+\mu(m \otimes \partial^M(n)))    \end{align*}
so that
\begin{align*}
     \partial^C((a,m)(b,n)) &+ (-1)^{|a|}(a,m)\partial^C(b,n)  = (\partial^A(ab) + 
     \beta(mb)+(-1)^{|a|}\beta(an),\\
     &-\partial^M(mb)-(-1)^{|a|}\partial^M(an)+(-1)^{|m|}\beta(m)n-(-1)^{|m|}m \beta(n)\\
     &\hspace{25mm} +(-1)^{|m|}\mu(\partial^M(m)\otimes n) + \mu(m \otimes \partial^M(n)).
\end{align*}
        Noting that $\beta \mu =0$ and 
        \[
        (-1)^{|m|}m \beta(n) - (-1)^{|m|}\beta(m)n = (-1)^{|m|}\partial^M\mu(m \otimes n) + (-1)^{|m|}\mu(\partial^M(m)\otimes n) + \mu(m \otimes \partial^M(n))
        \]
        we see that 
        \begin{align*}
            \partial^C((a,m)(b,n)) + (-1)^{|a|}(a,m)\partial^C(b,n) = &(\partial^A(ab) + 
     \beta(mb)+(-1)^{|a|}\beta(an) + (-1)^{|m|}\beta\mu(m \otimes u),\\
     &-\partial^M(mb) -(-1)^{|a|}\partial^M(an)-(-1)^{|M|}\partial^M\mu(m \otimes n)\\
     & \hspace{45mm} = \partial^C((a,m)(b,n)).
        \end{align*}
        Hence, $C$ is a dg algebra.

        Finally, if $A$ is graded commutative and $\mu$ satisfies condition (\ref{gradedcommutative condition}), then
        \begin{align*}
            (-1)^{|a||b|}(b,n)(a,m) &= (-1)^{|a||b|}(ba, na +(-1)^{|b|}bm +(-1)^{n}\mu(n\otimes m )\\
            &= (ab, (-1)^{|a||n|+|a|}na + (-1)^{|b||m|bm}+(-1)^{|m||n|+|m|+1}\mu(n \otimes m))\\
            &=(ab, (-1)^{|a|}an+mb+ (-1)^{|m|}\mu(m \otimes n)) = (a,m)(b,m)
        \end{align*}
        which shows $C$ is graded commutative. If $\mu$ also satisfies (\ref{strictly commutative condition}) and $|(a,m)|=|a|$ is odd so that $|m|$ is even, then 
        \[
        (a,m)(a,m) (a^2, ma-am + \mu(m \otimes m)) = (a^2, ma-ma +\mu(m \otimes m))=(0,0)
        \]
        showing $C$ is strictly commutative.
\end{proof}

Our next theorem shows how to construct plenty of dg $A$-modules $M$ with a homomorphism $\beta: M \to A$ to which the preceding theorem can be applied. In general, we note that the induced homomorphism $\theta: M \otimes M \to M$ will be nonzero for these examples: see (\ref{mu definition base}).

\begin{thm}
\label{thm-Betadef}
    Let $\alpha: A \to B$ be a homomorphism of graded commutative dg $R$-algebras.
    \begin{enumerate}[label=(\alph*)]
        \item The complex $M= \Cone{\alpha_{\geq 1}}[1]$ is a dg $A$-module with multiplication defined by 
        \[
        am = \begin{cases}
            ((-1)^{|a|}\alpha(a)b,ac), & m = (b,c) \in M_{\geq 1}^{\natural}\\
            
            ((-1)^{|a|}\alpha(a)m,-a \partial^B(m)), & m \in M_0 = B_1, a \in A_{\geq 1}^{\natural}
        \end{cases}
        \]
        
        \item There is a homomorphism $\beta: M \to A$ given by
         \[
        \beta(m) = \begin{cases} -c, & m= (b,c) \in M_{\geq 1}^{\natural}\\
        \partial^B(m), & m \in M_0 = B_1 
        \end{cases}
        \]

        \item There is an $A$-linear nullhomotopy $\mu: M \otimes M \to M$ for the induced homomorphism $\theta: M \otimes M \to M$ defined by $\theta(m \otimes n) = m\beta(n)-\beta(m)n$, where 
        \[\mu(m \otimes n) = 
        \begin{cases}
            (-(-1)^{|b|}bd, 0), & m=(b,c), n=(d,e) \in M_{\geq 1}^{\natural}\\
            (md,0), & m \in M_0, n=(d,e) \in M_{\geq 1}^{\natural}\\
            (-(-1)^{|b|}bn,0), & n \in M_0, m=(b,c) \in M_{\geq 1}^{\natural}\\
            (mn,0), & m,n \in M_0 = B_1
        \end{cases}
        \]
        and $\mu$ satisfies $\beta\mu =0$ and conditions (\ref{twisting data}) and (\ref{gradedcommutative condition}). Moreover, if $B$ is strictly commutative, then $\mu$ also satisfies (\ref{strictly commutative condition}).

        
    \end{enumerate}
\end{thm}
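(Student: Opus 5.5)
We verify the three parts in order by direct computation, splitting every check according to whether the elements involved lie in $M_0=B_1$ or in $M_{\geq 1}$. First we fix conventions. Since $\Cone{\alpha_{\geq 1}}$ has $B_{i}\oplus A_{i-1}$ in degree $i$ (with $A_{\ge 1}$ truncated), after the shift we get $M_i = B_{i+1}\oplus A_i$ for $i\geq 1$ and $M_0=B_1$. An element $(b,c)\in M_i$ has $|b|=i+1$ and $|c|=i$. The differential is the shifted cone differential: $\partial^M(b,c)=(-\partial^B(b)+\alpha(c),\partial^A(c))$ up to the sign of the shift, and on $M_1$ it lands in $M_0=B_1$ via $-\partial^B(b)+\alpha(c)$. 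We fix these signs once, choosing them so that $\beta$ in (b) is a chain map, and keep them throughout.

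For (a), $A$-bilinearity and associativity of the action, $(aa')m=a(a'm)$, follow from the facts that $\alpha$ is an algebra map and $B$ is graded commutative. The only cases needing care are those where $am$ changes pieces, namely $a\in A_{\geq1}$ and $m\in M_0$. There we use the Leibniz rule in $B$ to see that the formula $((-1)^{|a|}\alpha(a)m,-a\partial^B(m))$ is the product one expects once $m$ is identified with the element $(m,\,?)$ whose second coordinate is forced by the differential. The Leibniz identity $\partial^M(am)=\partial^A(a)m+(-1)^{|a|}a\partial^M(m)$ is then checked in four cases: $m\in M_0$ or $M_{\geq1}$, crossed with whether $\partial^M(m)$ lands in $M_0$. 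Each case reduces to the Leibniz rules in $A$ and $B$ together with $\alpha\partial^A=\partial^B\alpha$. For (b), $A$-linearity of $\beta$ is immediate from the formulas. The chain map property $\beta\partial^M=\partial^A\beta$ is immediate on $M_{\geq 2}$. On $M_1$ it reads $\partial^B(-\partial^B b+\alpha(c))=-\partial^A(c)$ after applying $\alpha$ suitably; this is where the sign conventions get pinned down. We also check that $\partial^B$ on $B_1$ lands in $B_0=R=A_0$.

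For (c), we first check that $\mu$ is $A$-bilinear of degree $+1$ in the case-by-case way, then verify the homotopy identity
\[
\theta(m\otimes n)=\partial^M\mu(m\otimes n)+\mu(\partial^M m\otimes n)+(-1)^{|m|}\mu(m\otimes\partial^M n),
\]
with signs matching those used in the proof of Theorem \ref{thm: dgmodule M}. The heuristic is that $\mu$ is the $B$-multiplication of the $B$-components, and the $A$-components of $\theta$ are exactly cancelled by the $\alpha(c)$ terms appearing in $\partial^M$. We carry this out in the four cases of the definition of $\mu$. The cases with $m$ or $n$ in $M_0$, where $\partial^M$ of an element of $M_1$ may drop into $M_0$ and change formulas, are the main obstacle: they require careful bookkeeping of the case distinctions in (a) and of the shift signs. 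The property $\beta\mu=0$ holds because every value of $\mu$ has zero $A$-component when it lies in $M_{\geq1}$. When it lies in $M_0$, which happens only in the case $m,n\in M_0$ with $|mn|=2$, this forces $|\mu|=1$, so the value is in $M_1$ and again has zero $A$-part. Thus $\beta\mu=-0=0$.

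For the remaining conditions, (\ref{twisting data}) reduces to associativity of multiplication in $B$ together with a sign check, since $\mu$ always outputs $(\text{product in }B,0)$. Condition (\ref{gradedcommutative condition}) reduces to graded commutativity of $B$. Writing $\mu(m\otimes n)=\pm(\bar m\bar n,0)$, where $\bar m$ denotes the $B$-component (of degree $|m|+1$), we compare $(-1)^{(|m|+1)(|n|+1)}$ with $-(-1)^{|m||n|}$ and the sign factors $(-1)^{|b|}$ in the definition. Finally, if $|m|$ is even then $|\bar m|$ is odd, so $\bar m^2=0$ when $B$ is strictly commutative, which gives (\ref{strictly commutative condition}).
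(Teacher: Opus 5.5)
Your plan is the same direct, case-by-case verification the paper carries out. As written, though, it contains a sign error that is not merely ``the sign of the shift,'' and it leaves the one substantive check undone.

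\textbf{The sign error.} For the theorem to hold, the differential on $M$ must be
\[
\partial^M(b,c)=\bigl(-\partial^B(b)-\alpha(c),\ \partial^A(c)\bigr),
\]
keeping only the first coordinate when $(b,c)\in M_1$. With your $+\alpha(c)$ the statement fails in two places. First, on $M_1$ one gets $\beta\partial^M(b,c)=\partial^B\alpha(c)=\partial^A(c)$, whereas $\partial^A\beta(b,c)=-\partial^A(c)$, so the identity you wrote for (b) is false. Second, for $a\in A_1$ and $m\in M_0$ the Leibniz rule in (a) acquires a spurious term $-2\alpha(a)\partial^B(m)$.

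Flipping the overall shift sign does not rescue it, since then $\beta$ anticommutes with the differentials on $M_{\geq 2}$. Your own heuristic explains the correct choice. The coordinate ``forced by the differential'' for $m\in M_0$ is the $c\in A_0$ making $(m,c)$ a cycle. The action formula $-a\partial^B(m)$ requires $c=-\partial^B(m)$, which is the $-\alpha(c)$ convention. Imposing $\partial^2=0$ and requiring $\beta$ to be a chain map in every degree does determine all the signs, so your stated principle would get there, but not through the shift sign alone.

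\textbf{The boundary cases.} The cases you single out as the main obstacle are exactly what the paper's Cases 1--4 grind through. Your $(m,?)$ idea, made precise, removes them. Write $\overline{m}$ and $\underline{m}$ for the $B$- and $A$-components of $m$, and set $\underline{m}:=-\partial^B(m)\in A_0$ for $m\in M_0$. This identifies $M$ with the good truncation in degrees $\geq 0$ of $\Cone{\alpha}[1]$. Then in every degree, boundary included,
\[
am=\bigl((-1)^{|a|}\alpha(a)\overline{m},\,a\underline{m}\bigr),\qquad \beta(m)=-\underline{m},
\]
\[
\overline{\partial^M m}=-\partial^B(\overline{m})-\alpha(\underline{m}),\qquad \mu(m\otimes n)=\bigl((-1)^{|m|}\overline{m}\,\overline{n},\,0\bigr).
\]
The homotopy identity then becomes a few lines. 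The $\partial^B$-terms cancel by the Leibniz rule in $B$. The $\alpha(\underline{m})$ and $\alpha(\underline{n})$ terms coming from $\partial^M$ give exactly $\bigl((-1)^{|m|}\alpha(\underline{m})\overline{n}-\overline{m}\alpha(\underline{n}),0\bigr)=m\beta(n)-\beta(m)n$. Note that $\theta$ has zero $A$-component; the $\alpha$-terms produce its $B$-component, not cancel an $A$-component. The same uniform formula for $\mu$ yields (\ref{twisting data}), (\ref{gradedcommutative condition}) and (\ref{strictly commutative condition}) just as you indicate.

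\textbf{The argument for $\beta\mu=0$.} This part is confused. $\mu$ has degree $+1$, so its values never lie in $M_0$. They always lie in $M_{\geq 1}$ with zero $A$-component, and that is the entire argument.
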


\begin{proof}
    \begin{enumerate}[label=(\alph*)]
        \item It is easily checked that the operation defined in the theorem (together with the natural $R$-module structure on $M$) is $R$-bilinear on homogeneous elements of $A$ and $M$, and so, it extends to an $R$-linear map $A\otimes_R M \to M$ making $M$ into a left graded $A$-module. Indeed, let $a,f \in A_{\geq 1}^{\natural}$. If $m =(b,c) , n=(d,e) \in M_{\geq 1}^{\natural}$ with $|m|=|n|$, then:
    \end{enumerate}

    \begin{align*}
        (af)m & = ((-1)^{|a|+|f|}\alpha(af)b, afc)=((-1)^{|a|+|f|}\alpha(a)\alpha(f)b,afc)\\
        &=a((-1)^{|f|}\alpha(f)b,fc)=a(fm)
    \end{align*}
    \begin{align*}
        a(m+n) &= a(b+d,c+e) = ((-1)^{|a|}\alpha(a)(b+d),a(c+e))\\
        &= ((-1)^{|a|}\alpha(a)c,ac) + ((-1)^{|a|}\alpha(a)d,ae) = am+an
    \end{align*}
    and if $|a| =|f|$, then 
    \begin{align*}
        (a+f)m &= ((-1)^{|a|}\alpha(a+f)b,(a+f)c)\\
        &=((-1)^{|a|}\alpha(a)b,ac) +((-1)^{|f|}\alpha(f)b,fc) = am+fm.
    \end{align*}
    If $m,n \in M_0$, then:
\begin{align*}
    (af)m &= ((-1)^{|a|+|f|}\alpha(af)m, -af\partial^B(m)) = ((-1)^{|a|+|f|}\alpha(a)\alpha(f)m, -af\partial^B(m))\\
    & = a((-1)^{|f|}\alpha(f)m, - f\partial^B(m)) = a(fm)
\end{align*}
\begin{align*}
    a(m+n) & = ((-1)^{|a|}\alpha(a)(m+n),-a\partial^B(m+n))\\
    & = ((-1)^{|a|}\alpha(a)m,-a\partial^B(m) )+ ((-1)^{|a|}\alpha(a)n,-a\partial^B(n)) = am+an
\end{align*}
and if $|a|=|f|$, then 
\begin{align*}
    (a+f)m &= ((-1)^{|a|}\alpha(a+f)m, -(a+f)\partial^B(m))\\
    & = ((-1)^{|a|}\alpha(a)m,-a\partial^B(m)) + ((-1)^{|f|}\alpha(f)m,-f\partial^B(m)) = am+fm.
\end{align*}

It remains to show that this multiplication makes $M$ into a dg $A$-module. If $m=(b,c) \in M_{\geq 1}^{\natural}$, then:
\begin{align*}
    \partial^M(am) &= (-(-1)^{|a|}\partial^B(\alpha(a)b) - \alpha(ac), \partial^A(ac))\\
    &=(-(-1)^{|a|}\partial^B\alpha(a)b- \alpha(a)\partial^B(b)-\alpha(a)\alpha(c), \partial^A(a)c+(-1)^{|a|}a\partial^A(c))\\
    & = ((-1)^{|a|-1}a \partial^A(a)b-\alpha(a)\partial^B(b)-\alpha(a)\alpha(c), \partial^A(a)c+(-1)^{|a|}a\partial^A(c))\\
    & =((-1)^{|a|-1}\alpha\partial^A(a)b, \partial^A(a)c) + (-\alpha(a)\partial^B(b)-\alpha(a)\alpha(c), (-1)^{|a|}a\partial^A(c))\\
    & = \partial^A(a)m+(-1)^{|a|}a(-\partial^B(b)-\alpha(c),\partial^A(c))\\
    &= \partial^A(a)m + (-1)^{|a|}a\partial^M(m).
    \end{align*}
    Otherwise, if $m\in M_0$, then:
    \begin{align*}
        \partial^M(am) &= (-(-1)^{|a|}\partial^B(\alpha(a)m)+\alpha(a\partial^B(m)),-\partial^A(a\partial^B(m)))\\
        & = ((-1)^{|a|-1}\partial^B\alpha(a)m-\alpha(a)\partial^B(m)+\alpha(a)\partial^B(m),-\partial^A(a)\partial^B(m))\\
        & = ((-1)^{|a|-1}\alpha\partial^A(a)m,-\partial^A(a)\partial^B(m))\\
        & = \partial^A(a)m= \partial^A(a)m + (-1)^{|a|}a\partial^M(m)
    \end{align*}
    since $\partial^M(m) = 0$. Hence, $M$ is a dg $A$-module.

\item The map $\beta: M \to A$ defined in the proposition is easily checked to be $R$-linear. We check simultaneously that $\beta$ is a chain map and $A$-linear. If $m = (b,c) \in M_{\geq 1}^{\natural}$ and $a\in A_{\geq 1}^{\natural}$, then
\begin{align*}
    \beta\partial^M(m) = \beta(-\partial^B(b)-\alpha(c), \partial^A(c))= -\partial^A(c) = \partial^A\beta(m)
\end{align*}
\begin{align*}
    \beta(am) = \beta((-1)^{|a|}\alpha(a)b,ac) = -ac = a\beta(m). 
\end{align*}
On the other hand, if $m \in M_0$ and $a \in A_{\geq 1}^{\natural}$, then
\[
\beta\partial^M(m)=0=\partial^A\beta(m)
\]
since $\beta(m)\in R=A_0$, and 
\[
\beta(am)=\beta((-1)^{|a|}\alpha(a)m, -a\partial^B(m)) = a\partial^B(m)=a\beta(m).
\]
Hence, $\beta$ is a homomorphism of dg $A$-modules.

\item First, we not that:
\[
ma =(-1)^{|a||m|}am = \begin{cases}
    (b\alpha(a),ca), & m =(b,c) \in M_{\geq 1}^{\natural}\\
    (m\alpha(a),-a\partial^B(m)), & m\in M_0=B_1, a\in A_{\geq 1 }^{\natural}
\end{cases}
\]
so that 
\begin{equation}
\label{mu definition base}
m\beta(n)-\beta(m)n
 = \begin{cases}
     ((-1)^{|c|}\alpha(c)d-b\alpha(e),0), & m=(b,c), n=(d,e) \in M_{\geq 1}^{\natural}\\
     (-m\alpha(e)-\partial^B(m)d, 0), & m \in M_0=B_1, n=(d,e)\in M_{\geq 1}^{\natural}\\
     (b\partial^B(n)-(-1)^{|c|}\alpha(c)n,0), & n \in M_0=B_1, m=(b,c)\in M_{\geq 1}^{\natural}\\
     m\partial^B(n)-\partial^B(m)n, & m,n\in M_0=B_1.
 \end{cases}\end{equation}
We will check that the assignment $(m,n) \mapsto \mu(m\otimes n)$ is $A$-bilinear for all $m,n \in M^{\natural}$ so that there is a well-defined $A$-linear map $\mu: M \otimes M \to M$ of degree $1$. In face, we will simultaneously verify that: (i) $(m,n) \mapsto \mu(m\otimes n)$ is $A$-linear, (ii) $\mu$ satisfies (\ref{gradedcommutative condition}),  (iii) $\mu$ satisfies (\ref{twisting data}), and (iv)
 $\mu$ acts as a nullhomotopy for $\theta$ on simple tensors.

Before verifying these properties, we note that in the presence of (\ref{gradedcommutative condition}) it is enough to show that $(m,n) \mapsto \mu(m \otimes n)$ is $A$-linear as a function of $n$ to show that it is bilinear. Indeed, under these conditions if $a\in A_{\geq 1}^{\natural}$ and $m,n \in M^{\natural}$, then:
\begin{align*}
    \mu(am \otimes n) &= -(-1)^{|a||n|+|m||n|}\mu(n \otimes am)\\
    & = -(-1)^{|a||n|+|m||n|+|a||m|}\mu(n \otimes ma)\\
    & = (-1)^{|a||n|+|m||n|+|a||m|}\mu(n \otimes m)a\\
    & = (-1)^{|a||n|+|a||m|}\mu(m \otimes n)a 
    = (-1)^{|a|}a\mu(m \otimes n).
\end{align*}
We now turn our attention to verifying (i)-(iv) in several cases.

\noindent CASE 1: $m = (b,c), n=(d,e) \in M_{\geq 1}^{\natural}$

\begin{itemize}
    \item[(i)] \[ \mu(m \otimes na) = (-(-1)^{|b|}bd\alpha(a),0) = (-(-1)^{|b|}bd,0)a = \mu(m\otimes n)a\]

    \item[(ii)] \begin{align*}
        \mu(m\otimes n) = (-(-1)^{|b|}bd,0) &= (-(-1)^{|b|+|b||d|}bd,0)\\
        &= (-1)^{|b||d|+|b|+|d|}(-(-1)^{|d|}db,0)\\
        & = -(-1)^{|m||n|}(-(-1)^{|d|}db,0) = -(-1)^{|m||n|}\mu(m\otimes n)
    \end{align*}
    \item[(iii)] If $u = (f,g)\in M_{\geq 1}^{\natural}$, then
    \begin{align*}
-(-1)^{|m|}\mu(m\otimes \mu(n \otimes u))&=(-1)^{|b|} ((-1)^{|b|+|d|}bdf,0)\\
        &= ((-1)^{|d|}bdf,0) = \mu(\mu(m\otimes n)\otimes u)
    \end{align*}
    and otherwise if $u \in M_0$, then
    \begin{align*}
        -(-1)^{|m|}\mu(m\otimes \mu(n \otimes u))&=(-1)^{|b|} ((-1)^{|b|+|d|}bdu,0)\\
        &= ((-1)^{|d|}bdu,0) = \mu(\mu(m\otimes n)\otimes u).
   \end{align*}
        \item [(iv) ]
        \begin{align*}
            \partial^M\mu&(m\otimes n) + \mu\partial^{M\otimes M}(m \otimes n) \\
            &= \partial^M(-(-1)^{|b|}bd,0)+ \mu(\partial^M(m)\otimes n)+(-1)^{|m|}\mu(m\otimes \partial^M(n))\\
            & = ((-1)^{|b|}\partial^B(b)d+b\partial^B(d), 0 ) + (-(-1)^{
            |b|}\partial^B(b)d-(-1)^{|b|}\alpha(c)d,0) + (-b\partial^B(d)-b\alpha(e),0)\\
            & = ((-1)^{|c|}\alpha(c)d-b\alpha(e),0) = m \beta(n)-\beta(m)n.
        \end{align*}         
\end{itemize}

\noindent CASE 2: $m\in M_0$ and $n = (d,e)\in M_{\geq 1}^{\natural}$

\begin{itemize}
    \item [(i)] \[ \mu(m\otimes na)=(md\alpha(a),0) = (md,0)a = \mu(m\otimes n)a
    \]
    \item [(ii)] 
    \begin{align*}
        \mu(m \otimes n) = (md,0) &= ((-1)^{2|d|}md,0) = ((-1)^{|d|}dm,0)\\
        &= -(-(-1)^{|d|}dm,0) = -(-1)^{|m||n|}\mu(m \otimes n)
    \end{align*}

    \item [(iii)]If $u= (f,g)\in M_{\geq 1}^{\natural} $, then
    \begin{align*}
        -(-1)^{|m|}\mu(m \otimes \mu(n \otimes u) )& = -(-1)^{|m|}(-(-1)^{|d|}mdf,0)\\
        & = (-(-1)^{|d|+1}mdf,0) = \mu(\mu(m \otimes n)\otimes u)
    \end{align*}
    and otherwise if $u \in M_0$, then
    \begin{align*}
        -(-1)^{|m|}\mu(m \otimes \mu(n \otimes u) ) & = -(-1)^{|m|}(-(-1)^{|d|}mdu,0)\\
        &=(-(-1)^{|d|+1}mdu,0) = \mu(\mu(m\otimes n)\otimes u).
    \end{align*}
    \item[(iv)]
    \begin{align*}
        \partial^M\mu(m\otimes n) + \mu\partial^{M\otimes M}(m \otimes n) & = \partial^M(md,0)+\mu(\partial^M(m)\otimes n) + \mu(m \otimes \partial^M(n))\\
        &= (-\partial^B(m)d+m\partial^B(d),0)+(-m\partial^B(d)-m\alpha(e),0)\\
        &= (-\partial^B(m)d-m\alpha(e),0) = m\beta(n)-\beta(m)n.
    \end{align*}
\end{itemize}

\noindent CASE 3: $m = (b,c)\in M_{\geq 1}^{\natural}$and  $n\in M_0$ 

\begin{itemize}
    \item [(i)] \[ \mu(m \otimes na) = (-(-1)^{|b|}bn\alpha(a),0) = (-(-1)^{|b|}bn,0)a = \mu(m\otimes n)a\]

    \item[(ii)] 
   \[
        \mu(m\otimes n)= (-(-1)^{|b|}bn,0) = -(nb,0) = -(-1)^{|m||n|}\mu(n\otimes m)
    \]
    \item[(iii)] If $u= (f,g) \in M_{\geq 1}^{\natural}$, then
    \begin{align*}
        -(-1)^{|m|}\mu(m\otimes \mu(n \otimes u))
        &= (-1)^{|b|}((-1)^{|b|+1}bnf)\\
        &= ((-1)^{2|b|+1}bfn,0) = \mu(\mu(m\otimes n) \otimes u)
    \end{align*}
    and otherwise if $u\in M_0$, then
    \begin{align*}
         -(-1)^{|m|}\mu(m\otimes \mu(n \otimes u))
        &= (-1)^{|b|}((-1)^{|b|+1}bnu)\\
        &= ((-1)^{2|b|+1}bfu,0) = \mu(\mu(m\otimes n) \otimes u).
    \end{align*}
    \item[(iv)]
    \begin{align*}
        \partial^M\mu(m\otimes n)+\mu\partial^{M\otimes M}(m \otimes n) &= \partial^M(-(-1)^{|b|}bn,0)+ \mu(\partial^M(m)\otimes n)+\mu(m \otimes \partial^M(n))\\
        &=((-1)^{|b|}\partial^B(b)n-b\partial^B(n),0)+((-1)^{|b|-1}\partial^B(b)n+(-1)^{|b|-1}\alpha(c)n,0)\\
        &= (-b\partial^B(n)+(-1)^{|c|}\alpha(c)n,0)= m\beta(n)-\beta(m)n.
    \end{align*}
\end{itemize}

\noindent CASE 4: $m,n\in M_0$

    \begin{itemize}
        \item [(i)] \[\mu(m \otimes na) = (mn\alpha(a),0)=(mn,0)a = \mu(m\otimes n)a\]

        \item [(ii)] \[
            \mu(m \otimes n) = (mn,0) = (-nm,0) = -(-1)^{|m||n|}\mu(n \otimes n)
       \]
       \item[(iii)] If $u= (f,g) \in M _{\geq 1}^{\natural}$
       \[
           -(-1)^{|m|}\mu(m \otimes \mu(n \otimes u)) = -(mnf,0) = (-mnf,0) = \mu(\mu(m\otimes n)\otimes u)
       \]
       and otherwise if $u \in M_0$, then
       \[
       -(-1)^{|m|}\mu(m \otimes \mu(n \otimes u)) = -(mnu,0) = (-mnu,0) = \mu(\mu(m\otimes n)\otimes u).
       \]
       \item[(iv)] 
       \[
       \partial^M\mu(m \otimes n) = \partial^M(mn,0) = -\partial^B(mn) = m\partial^B(n)-\partial^B(m)n = m \beta(n)-\beta(m)n
       \]
    \end{itemize}

Thus, $\mu$ is an $A$-linear nullhomotopy for $\theta$ satisfying (\ref{twisting data}) and (\ref{gradedcommutative condition}), which is also easily seen to satisfy $\beta\mu =0$ by definition.

Finally, suppose that $B$ is strictly commutative and $m\in M^{\natural}$ with $|m|$ even. If $m=(b,c) \in M_{\geq 1}^{\natural}$, then $|b|$ is odd so that $\mu(m\otimes m) = (b^2,0)= (0,0)$, and otherwise if $m \in M_0 =B_1$, then $\mu(m\otimes m) = (m^2,0) = (0,0)$. Hence, $\mu$ satisfies (\ref{strictly commutative condition}).

\end{proof}

\section{Applications}\label{apps}

We now apply the construction in the previous section to two different families of ideals, adapting when necessary.  We only provide applications involving and obtaining strictly commutative dg algebras.  For brevity, we will simply write ``dg algebra" to mean strictly commutative dg algebra.    

\subsection{Complete Intersection-like Ideals}

We begin with an application related to resolutions of ideals built from complete intersections.  

\begin{cor}\label{CIdg}
    Let $(R,\mathfrak{m})$ be a Noetherian local ring or standard graded algebra over a field with irrelevant ideal $\mathfrak{m}$. Suppose that $C,L \subseteq R$ are complete intersection ideals with $C \subseteq \mathfrak{m}L$ and that $f\in R$ is regular on $R$ and $R/C$ (all homogeneous in the graded case).  Set $I = C +fL$. Then, the minimal free resolution of $R/I$ over $R$ has the structure of a dg algebra.
\end{cor}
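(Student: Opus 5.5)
Since $C\subseteq \mathfrak m L$, we have $C\subseteq L$, so there is a natural surjection $R/C\to R/L$. The plan is to realize the resolution of $R/I$ as the mapping cone of a lift of this map, and then apply Theorems \ref{thm-Betadef} and \ref{thm: dgmodule M}.

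First, let $A=\KK(C)$ and $B=\KK(L)$ be the Koszul complexes on minimal generating regular sequences of $C$ and $L$. These minimally resolve $R/C$ and $R/L$, and both are strictly commutative dg algebras. Write each generator of $C$ as an $R$-linear combination of the generators of $L$, with coefficients in $\mathfrak m$; this is possible because $C\subseteq\mathfrak mL$. Sending $e_{c_i}\mapsto \sum_j r_{ij}e_{\ell_j}$ on $A_1$ then extends uniquely to a homomorphism of dg algebras $\alpha:A\to B$, since the Koszul complex is the exterior algebra and the map on $A_1$ commutes with the differentials. To account for $f$, replace $B$ by $B'=B$ with the differential scaled by $f$ in the appropriate way. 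A cleaner route is to note that $fL$ is generated by a regular sequence times $f$, so its Koszul-type resolution is $B$ with $\partial^{B}$ on $B_1$ multiplied by $f$. Equivalently, twist the degree-one map: take $\alpha$ on $A_1$ to be $e_{c_i}\mapsto f^{-1}$-free coefficients. I will set it up as follows. Let $B$ be the Koszul complex on $f\ell_1,\dots,f\ell_s$, which is a dg algebra because the Koszul complex on any sequence is one. Since $C\subseteq \mathfrak m L$, write $c_i=\sum r_{ij}\ell_j$. This needs care, because $c_i$ must be expressed in terms of the $f\ell_j$. So instead I take the more natural route through Theorem \ref{thm-Betadef}: with $M=\Cone{\alpha_{\geq1}}[1]$ and $\beta$ as in part (b), the image of $\beta$ on $M_0=B_1$ is $\partial^B(B_1)$. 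Choosing $B=\KK(f\ell_1,\dots,f\ell_s)$ makes $\Img\beta$ in degree 0 equal to $fL$, so $\Cone{\beta}$ resolves $R/(C+fL)$ in degree 0. For $\alpha$ I need $\alpha(A_1)$ to have boundaries landing in $C$. The difficulty is only that $\alpha$ must be a chain map, i.e. $\partial^B\alpha(e_{c_i})=c_i$, which requires $C\subseteq fL$. That fails in general, so the correct choice is $B=\KK(L)$ for the cone over $A$, with $f$ inserted in $\beta$ on $M_0$.

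The corrected plan is to modify Theorem \ref{thm-Betadef} slightly by defining $\beta(m)=f\partial^B(m)$ on $M_0$ and $\mu(m\otimes n)=(fmn,0)$ for $m,n\in M_0$, and similarly multiplying $\alpha$ by $f$ in the module structure $am$ in degree $0$. The verifications in Cases 1–4 go through verbatim with $f$ carried along, since $f$ has degree $0$ and is central. Theorem \ref{thm: dgmodule M} then gives a strictly commutative dg algebra structure on $\Cone{\beta}$.

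It remains to show that $\Cone{\beta}$ is a minimal free resolution of $R/I$. Unwinding, $\Cone{\beta}$ is the iterated cone of $A\xleftarrow{} B[\,\cdot\,]\xleftarrow{} A$, which is the standard mapping cone resolution for
\[
0\to R/(C:fL)\to R/C\to R/I\to 0 .
\]
I expect this step to be the main obstacle. Since $f$ is regular on $R/C$, we have $(C:f)=C$, hence $C:fL=C:L$. The cone computes $R/I$ via the exact sequence $0\to (C+fL)/C\to R/C\to R/I\to0$, and $(C+fL)/C\cong fL/(fL\cap C)\cong L/(C:f)\cap L=L/C$, using that $f$ is regular on $R$ and on $R/C$. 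So $\Cone{\alpha}$, suitably shifted, resolves $L/C$, and the multiplication by $f$ realizes the embedding. Acyclicity then follows from the long exact sequence in homology, using $H(A)=R/C$ and $H(B)=R/L$. Minimality follows because every entry of $\alpha$ and of $\beta$ in positive degrees lies in $\mathfrak m$: the $r_{ij}\in\mathfrak m$ by the hypothesis $C\subseteq\mathfrak mL$, and the factor $f\in\mathfrak m$ handles degree $0$.
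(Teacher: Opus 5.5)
Your skeleton matches the paper's: $\alpha=\bigwedge Y\colon \KK^C\to\KK^L$, then $M=\Cone{\alpha_{\geq 1}}[1]$ resolving $L/C\cong I/C$, then Theorems \ref{thm-Betadef} and \ref{thm: dgmodule M}. Your second computation of $(C+fL)/C\cong L/C$ is also correct. But the step where $f$ enters, which is the real content of the corollary, fails as you set it up.

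Suppose $\beta$ is rescaled only on $M_0$, i.e.\ $\beta(m)=f\partial^B(m)$ there but still $\beta(b,c)=-c$ on $M_{\geq 1}$. Then $\beta$ is not a chain map. For $m=(0,c)\in M_1$ with $c=e_{c_1}$ you get $\beta\partial^M(m)=\beta(-\alpha(c))=-f\partial^B\alpha(c)=-fc_1$, whereas $\partial^A\beta(m)=-c_1$. Consequently $\partial^2(0,m)=((f-1)c_1,0)\neq 0$ in $\Cone{\beta}$, so the cone is not even a complex.

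This failure does not involve the $A$-action, so rescaling the action on $M_0$ cannot repair it. In general such a rescaling also destroys the Leibniz rule $\partial^M(am)=\partial^A(a)m$ for $m\in M_0$, producing terms like $(f-1)\alpha(a)\partial^B(m)$. So the claim that Cases 1--4 ``go through verbatim with $f$ carried along'' is false for your modification. Your minimality argument breaks for the same reason: on $M_{\geq 1}$ your $\beta$ is the projection $(b,c)\mapsto -c$, whose matrix entries are $\pm 1$, not elements of $\mathfrak m$.

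The missing idea is to scale uniformly and change nothing else. Keep the module structure and $\mu$ of Theorem \ref{thm-Betadef}, and use $f\beta$ and $f\mu$ on all of $M$. The twisting conditions then hold automatically:
since $f\in R=A_0$ has degree $0$, $f\beta$ is again a homomorphism of dg $A$-modules;
its induced map is $f\theta$, which is nullhomotopic via $f\mu$;
$(f\beta)(f\mu)=f^2\beta\mu=0$;
both sides of (\ref{twisting data}) are quadratic in $\mu$, so each simply acquires a factor $f^2$;
and (\ref{gradedcommutative condition}) and (\ref{strictly commutative condition}) are linear in $\mu$, so they are preserved.

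The resolution properties follow as well. $f\beta$ lifts the inclusion $I/C\hookrightarrow R/C$, using the augmentation $M\to L/C$ followed by multiplication by $f$. The long exact sequence gives acyclicity because $L/C\xrightarrow{\cdot f}R/C$ is injective when $f$ is regular on $R/C$. Minimality now holds because $f\beta(b,c)=-fc$ has entries in $(f)\subseteq\mathfrak m$. This is precisely the paper's proof.

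Finally, discard your first exact sequence $0\to R/(C:fL)\to R/C\to R/I\to 0$. Here $(C+fL)/C\cong L/C$ is not cyclic unless $L$ is principal, and that sequence plays no role.
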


\begin{proof}
    We construct the minimal free resolution of $R/I$ over $R$ via two mapping cones. Consider the short exact sequence
    \[
    0 \to I/C \to R/C \to R/I \to 0.
    \]
    Since $f$ is a nonzerodivisor on $R/C$, we have 
    \[
    L/C \cong (C+fL)/C = I/C
    \]
    via multiplication by $f$. Now, consider the short exact sequence
    \[
    0 \to C \to L \to L/C\to 0.
    \]
    Let $\KK^C$ and $\KK^L$ denote the Koszul complexes minimally resolving $R/C$ and $R/L$, respectively, where we view the underlying graded modules of the complexes as $\bigwedge R^b$ and $\bigwedge R^a$. If $C=(f_1, \ldots, f_b)$ and $L=(g_1, \ldots, g_a)$, then, since $C\subseteq \mathfrak{m}L$, we can write $f_j = \sum_{i=1}^{a}g_iy_{i,j}$ for some $a\times b$ matrix $Y= (y_{i,j})$ with entries in $\mathfrak{m}$ (homogeneous in the graded case). It is not too hard to verify that the $R$-algebra map $\alpha = \bigwedge Y : \KK^C \to \KK^L$ is a chain map lifting the natural surjection $R/C \to R/L$. Since $\KK^C$ and $\KK^L$ are dg algebras generated in degree one and $\alpha$ is an $R$-algebra map, it suffices to note that $\alpha$ is a chain map up to degree one. Hence, $\alpha$ is a homomorphism of dg algebras. In particular, we note that $M := \text{Cone}(\alpha_{\geq 1})[1]$ is the minimal free resolution of $L/C$.

    By \cref{thm-Betadef}, $M$ has the structure of a dg $\KK^C$-module with twisting data $\beta, \mu$. Hence, the minimal free resolution of $I/C$ is $M$, where we simply multiply the augmentation  map $M \to L/C$ by $f$ to obtain the augmentation map $M \to I/C$. Then, $f\beta: M \to \KK^C$ is a homomorphism of dg modules lifting the inclusion $I/C\subseteq R/C$ so that $C = \text{Cone}(f\beta)$ is the minimal free resolution of $R/I$.  It is easily seen that $f\beta, f\mu$ is also twisting data for $M$, so that $C$ has the structure of a dg algebra by \cref{thm: dgmodule M} and \cref{thm-Betadef}.
\end{proof}

An ideal $I$ of $Q=\Bbbk[x_1,\ldots,x_n]$ is \textbf{Koszul} if $\Bbbk$ has a linear minimal free resolution over $Q/I$.  Two ideals $I,J \subseteq Q$ are \textbf{linked} if there is a complete intersection ideal $C$ with $C \subseteq I \cap J$, $C:I=J$, and $C:J=I$.  If $I$ and $J$ are linked, we write $I \sim J$.  Call $I$ is \textbf{licci} if there is a sequence of links $I \sim J_1 \sim J_2 \sim \dots \sim J_s$ in which $J_s$ is a complete intersection.  In \cite{koszullicci}, the authors classify Koszul licci ideals $I$ into four types based on the deviation $\delta(I)$ of $I$: (1) when $\delta(I)=0$, (2) when $\delta(I)=1$, (3) when $\delta(I)=2$ and $I$ is Gorenstein, and (4) otherwise.  In case (4), it is shown that 
\[
I=wI_1(X)+I_1(XY)+\mathfrak{q},
\]
where $X$ and $Y$ are matrices of linear forms, $w$ is a linear form, $I_1(-)$ is the ideal generated by the entries of the input matrix, and $\mathfrak{q}$ is generated by a regular sequence on $\dfrac{Q}{wI_1(X)+I_1(XY)}$.  In cases (1), (2), and (3), the ideal $I$ is minimally resolved by a dg algebra: case (1) ideals are complete intersections, which are minimally resolved by the Koszul complex; case (2) ideals are Cohen-Macaulay of height 2 and thus minimally resolved by the Hilbert-Burch complex \cite{hilbert}, which is known to admit a dg algebra structure \cite{burch}; and case (3) ideals are Gorenstein of height 3, shown to be minimally resolved by a dg algebra in \cite{kustinmiller}.  Corollary \ref{CIdg} takes care of case (4) ideals.  

\begin{cor}\label{cor:koszullicci}
    Koszul licci ideals are minimally resolved by dg algebras.  
\end{cor}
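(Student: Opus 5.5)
The plan is a case analysis that follows the classification of \cite{koszullicci}. A Koszul licci ideal $I \subseteq Q$ falls into one of four cases according to its deviation $\delta(I)$. When $\delta(I)=0$, $I$ is a complete intersection and the Koszul complex minimally resolves $Q/I$. When $\delta(I)=1$, $I$ is perfect of height 2 and the Hilbert--Burch complex carries a dg algebra structure \cite{burch}. When $\delta(I)=2$ and $I$ is Gorenstein, $I$ has height 3 and \cite{kustinmiller} supplies the dg algebra structure. So the work is concentrated in case (4), where
\[
I = wI_1(X) + I_1(XY) + \mathfrak{q}.
\]
There I intend to apply Corollary \ref{CIdg} with $R = Q$, $f = w$, $L = I_1(X)$, and $C = I_1(XY) + \mathfrak{q}$. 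Since $I_1(XY) \subseteq I_1(X)$, this gives $I = C + wL$.

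To invoke Corollary \ref{CIdg} I must check four hypotheses.
\begin{enumerate}
\item $L = I_1(X)$ is a complete intersection. The entries of $X$ are linear forms, so $I_1(X)$ is generated by linearly independent linear forms and is automatically a complete intersection.
\item $C \subseteq \mathfrak{m}L$. The entries of $XY$ lie in $\mathfrak{m}I_1(X)$ because $Y$ consists of linear forms. For $\mathfrak{q}$ I would use the structure results of \cite{koszullicci}, which, together with minimality of generators, show that the generators of $\mathfrak{q}$ may be taken inside $\mathfrak{m}I_1(X)$, or can be arranged to be after a change of coordinates.
\item $C$ is a complete intersection. For this I need $I_1(XY)$ to be generated by a regular sequence; I expect this is part of the description in \cite{koszullicci}. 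Then $\mathfrak{q}$, being regular modulo $wI_1(X)+I_1(XY)$, is also regular modulo $I_1(XY)$.
\item $w$ is regular on $Q/C$. This should follow from the height count for $I$ in \cite{koszullicci}.
\end{enumerate}

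The main obstacle is extracting from \cite{koszullicci} exactly these hypotheses: that $C$ is a complete intersection contained in $\mathfrak{m}I_1(X)$ and that $w$ is regular on $Q/C$. If they are not stated there directly, my fallback is a different grouping. I would take $C = I_1(XY)$ alone, apply Corollary \ref{CIdg} to $J = wI_1(X) + I_1(XY)$, and then account for $\mathfrak{q}$ by tensoring with the Koszul complex on $\mathfrak{q}$. Because $\mathfrak{q}$ is a regular sequence on $Q/J$, this tensor product is the minimal free resolution of $Q/I$, and a tensor product of strictly commutative dg algebras is again a strictly commutative dg algebra.
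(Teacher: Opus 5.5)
Your fallback is exactly the paper's proof. Corollary~\ref{CIdg} with $L=I_1(X)$, $C=I_1(XY)$, $f=w$ gives a dg algebra resolution of $Q/J$, where $J=wI_1(X)+I_1(XY)$, and tensoring with the Koszul complex on $\mathfrak{q}$ finishes the argument. Your primary route, however, cannot work, so you should drop it rather than keep it as the main plan. Hypothesis (2), that $\mathfrak{q}\subseteq\mathfrak{m}I_1(X)$, fails whenever $\mathfrak{q}\neq 0$:
\begin{enumerate}
\item Since $wI_1(X)\subseteq J$, we have $I_1(X)\subseteq (J:w)$.
\item $w\notin J$, because $J$ is generated by quadrics and $w$ is a nonzero linear form.
\item Hence every element of $I_1(X)$ is a zerodivisor on $Q/J$.
\item The first element of the regular sequence generating $\mathfrak{q}$ is a nonzerodivisor on $Q/J$, so it cannot lie in $I_1(X)$, let alone in $\mathfrak{m}I_1(X)$.
\end{enumerate}
This obstruction is stated purely in terms of the ideals $J$, $I_1(X)$ and $\mathfrak{q}$. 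So no choice of generators and no change of coordinates can repair it. The regular sequence $\mathfrak{q}$ is transverse to $L$ rather than inside it, which is why it must be split off by a tensor product instead of absorbed into $C$.

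In the fallback you still need the analogues of your items (3) and (4) for $C=I_1(XY)$: that $I_1(XY)$ is a complete intersection, and that $w$ is regular on $Q/I_1(XY)$. The containment $I_1(XY)\subseteq\mathfrak{m}I_1(X)$ is immediate because $Y$ has linear entries. The paper simply asserts the complete intersection property. Its stated reason, generation by linear forms, applies only to $I_1(X)$, since the entries of $XY$ are quadrics. So you are right that both facts have to be taken from the structure theorem in \cite{koszullicci}.
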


\begin{proof}
    Both $I_1(X)$ and $I_1(XY)$, being generated by linear forms in $Q$, are complete intersection ideals.  Taking $L=I_1(X)$, $C=I_1(XY)$, and $f=w$ in Corollary \ref{CIdg}, we get that $wI_1(X)+I_1(XY)$ is minimally resolved by a dg algebra.  By tensoring the minimal free resolution of $wI_1(X)+I_1(XY)$ by the Koszul complex minimally resolving $\mathfrak{q}$, we obtain the minimal free resolution of $I$, since $\mathfrak{q}$ is generated by a regular sequence on $\dfrac{Q}{wI_1(X)+I_1(XY)}$.  The tensor product of dg algebras is again a dg algebra.      
\end{proof}

\subsection{Suspension of Edge Ideals}\label{suspensionsection}

We now construct the minimal free resolution of the edge ideal of a ``new" graph obtained via the introduction of a new vertex to an ``old" graph.  Let $G$ be a finite simple graph on the vertex set $V$, and let $I(G)$ be its edge ideal in $\Bbbk[V]$.  Let $\mathcal{C}$ be a vertex cover of $G$, and let $z$ be a variable not in $V$.  The \textbf{suspension} of $z$ over $\mathcal{C}$ is the graph $G^\mathcal{C}$ on the vertex set $V \cup\{z\}$ with edge set $E(G) \cup \{zx \mid x \in \mathcal{C}\}$.  The edge ideal $I(G^\mathcal{C})$ of $G^\mathcal{C}$ then lives in the polynomial ring $Q=\Bbbk[V \cup \{z\}]$.  

\begin{ex}
    Figure \ref{suspfig} shows a graph $G$ on the vertices $x_1,\ldots,x_5$ and the suspension of a new vertex $z$ over $\mathcal{C}=\{x_1,x_3,x_4\}$, a vertex cover of $G$.
\end{ex}
    \begin{figure}[h]
        \centering
\begin{tikzpicture}[
    scale=0.85,
    every node/.style={font=\small},
    vertex/.style={circle, draw, fill=white, minimum size=5mm, inner sep=0pt},
    edge/.style={thick},
    >={Latex}
]

\begin{scope}
    \node[vertex] (v4) at (-3,0) {$x_4$};
    \node[vertex] (v2) at (-1,0) {$x_2$};
    \node[vertex] (v3) at (1,0) {$x_3$};
    \node[vertex] (v5) at (3,0) {$x_5$};
    \node[vertex] (v1) at (0,1.4) {$x_1$};

    \draw[edge] (v1) -- (v3);
    \draw[edge] (v2) -- (v3);
    \draw[edge] (v3) -- (v5);
    \draw[edge] (v2) -- (v4);
    \draw[edge] (v2) -- (v1);

    \node[align=center] at (0,-1)
    {$I(G)=(x_1x_2,\ x_1x_3,\ x_2x_3,\ x_2x_4,\ x_3x_5)$};
\end{scope}

\draw[->, thick] (3.85,0.75) -- (5.25,0.75);

\begin{scope}[xshift=9cm]
    \node[vertex] (v4) at (-3,0) {$x_4$};
    \node[vertex] (v2) at (-1,0) {$x_2$};
    \node[vertex] (v3) at (1,0) {$x_3$};
    \node[vertex] (v5) at (3,0) {$x_5$};
    \node[vertex] (v1) at (0,1.4) {$x_1$};
    \node[vertex] (z)  at (0,3) {$z$};

    \draw[edge] (v1) -- (v3);
    \draw[edge] (v2) -- (v3);
    \draw[edge] (v3) -- (v5);
    \draw[edge] (v2) -- (v4);
    \draw[edge] (v2) -- (v1);
    \draw[edge] (z) -- (v1);
    \draw[edge] (z) -- (v3);
    \draw[edge] (z) -- (v4);

    \node[align=center] at (0,-1)
    {$I(G^{\mathcal{C}})=(x_1x_2,\ x_1x_3,\ x_2x_3,\ x_2x_4,\ x_3x_5,\ zx_1,\ zx_3,\ zx_4)$};
\end{scope}

\end{tikzpicture}
        \caption{Suspension over a vertex cover}
        \label{suspfig}
    \end{figure}

We will look at the case when the minimal free resolution of $Q/I(G)$ admits the structure of a dg algebra.  For a squarefree monomial ideal $I$ (such as an edge ideal), Katth\"an shows that if the minimal free resolution of $Q/I$ is a dg algebra, then it must have a certain form \cite[Theorem 3.6]{Kat}.

\begin{thm}[Katth\"an, 2019]\label{structurethm}
Let $I$ be a squarefree monomial ideal, and suppose that the minimal $Q$-free resolution $\mathbb{G}$ of $Q/I$ admits the structure of a differential graded algebra.  Then, there is an isomorphism of differential graded algebras $\mathbb{G} \cong \mathbb{T}/\mathbb{J}$, where $\mathbb{T}$ is the Taylor resolution of $Q/I$ and $\mathbb{J}$ is a dg ideal of $\mathbb{T}$.  
\end{thm}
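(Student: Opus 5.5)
The plan is to compare $\mathbb{G}$ with the Taylor resolution $\mathbb{T}$ through a comparison map, and to use minimality to make that map surjective in every degree.

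First, both $\mathbb{T}$ and $\mathbb{G}$ are free resolutions of $Q/I$. Choose a comparison chain map $\varphi:\mathbb{T}\to\mathbb{G}$ lifting the identity on $Q/I$. We want $\varphi$ to be a homomorphism of dg algebras, and we build it inductively along the multiplicative structure of $\mathbb{T}$. The Taylor complex is generated as a $Q$-algebra by its degree-one basis $g_{\{u_i\}}$. By the Taylor multiplication of Example \ref{taylordef}, each $g_U$ with $U=\{u_{i_1}<\dots<u_{i_k}\}$ satisfies
\[
g_{\{u_{i_1}\}}\cdots g_{\{u_{i_k}\}}=\bigl(\textstyle\prod_j u_{i_j}/\lcm_U\bigr)\, g_U,
\]
so $g_U$ is a divided version of the product of the degree-one generators. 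We therefore cannot simply define $\varphi$ on the generators and extend multiplicatively; the divisibility has to be handled.

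Next, we use that $I$ is squarefree and that $\mathbb{G}$ is a dg algebra. We grade $\mathbb{G}$ by $\mathbb{Z}^n$; it has a multigraded basis in squarefree degrees (by Hochster / the lcm-lattice). In $\mathbb{G}$, set $e_i=\varphi(g_{\{u_i\}})$, a basis element of multidegree $u_i$. The product $e_{i_1}\cdots e_{i_k}$ has multidegree $\prod_j u_{i_j}$. Define $\varphi(g_U)$ to be the unique element of multidegree $\lcm_U$ with
\[
\bigl(\textstyle\prod_j u_{i_j}/\lcm_U\bigr)\,\varphi(g_U)=e_{i_1}\cdots e_{i_k}.
\]
This requires the monomial $\prod u_{i_j}/\lcm_U$ to divide the product in the free module $\mathbb{G}$. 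We prove this by induction on $k$, using the Leibniz rule and the fact that $\mathbb{G}$ is acyclic and multigraded: the differential of the product is divisible by induction, and a cycle argument on the multigraded strands then shows the product itself is divisible. This is the step I expect to be the \textbf{main obstacle}. A cleaner route is to note that $\mathbb{G}_{\mathbf{a}}=0$ for non-squarefree multidegrees above the lcm-lattice, and then use that $\mathbb{G}$ is torsion-free together with a "divided power"-free argument: the product $e_{i_1}\cdots e_{i_k}$ lies in multidegree $\prod u_{i_j}$, and every basis element of $\mathbb{G}$ has squarefree degree dividing $\lcm_U$, so every coefficient is automatically divisible by the required monomial.

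Once $\varphi$ is defined, it is multiplicative by construction and by the Taylor product formula. It commutes with differentials because $Q$ is a domain and $\mathbb{G}$ is free, so the relation can be checked after multiplying by the nonzero monomial.

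Finally, we show $\varphi$ is surjective. Since $\mathbb{G}$ is minimal and $\varphi$ lifts the identity, there is also a comparison map $\psi:\mathbb{G}\to\mathbb{T}$ with $\varphi\psi$ homotopic to the identity. By minimality of $\mathbb{G}$, $\varphi\psi$ is then an isomorphism, hence $\varphi$ is surjective. Set $\mathbb{J}=\ker\varphi$. It is a dg ideal because $\varphi$ is a dg algebra map, and so $\mathbb{G}\cong\mathbb{T}/\mathbb{J}$ as dg algebras.
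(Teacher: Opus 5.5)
The paper gives no proof of this statement; it quotes Katthän. Your overall route is the standard one, and most of it is correct: define $\varphi(g_U)=c_U^{-1}e_{i_1}\cdots e_{i_k}$ with $c_U=\prod_j u_{i_j}/\lcm_U$, check the chain-map and multiplicativity identities after multiplying by $c_U$ in the torsion-free module $\mathbb{G}$, and get surjectivity from minimality ($\varphi\psi\otimes\Bbbk=\mathrm{id}$, then Nakayama). The gap is the divisibility step. The sentence ``the product $e_{i_1}\cdots e_{i_k}$ has multidegree $\prod_j u_{i_j}$'' assumes the multiplication on $\mathbb{G}$ is $\ZZ^n$-homogeneous. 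The hypothesis does not give this, since the paper's dg algebras are graded only homologically, and it is not automatic.

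Here is a counterexample to your construction. Take $I=(xy,xz,w,v)$. Then $\mathbb{G}$ is the resolution $0\to Q\epsilon\to Qe_1\oplus Qe_2\to Q$ of $Q/(xy,xz)$, with $\partial\epsilon=ze_1-ye_2$ and $e_1e_2=-x\epsilon$, tensored with the Koszul complex on $e_3,e_4$ (where $\partial e_3=w$, $\partial e_4=v$). Let $t$ be the $Q$-linear map of degree $+1$ with $t(e_1)=e_3e_4$ and $t=0$ on every other basis element, and put $\phi=1+\partial t+t\partial$. This $\phi$ is a chain automorphism with $\phi_0=1$: it satisfies $(\phi-1)^2=0$ in degrees $1,2$ and is the identity elsewhere. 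Hence $a*b:=\phi^{-1}(\phi(a)\phi(b))$ is again a strictly commutative dg algebra structure on $\mathbb{G}$, even homogeneous for the standard $\ZZ$-grading. But
\[
e_1*e_2=-x\epsilon+xz\,e_3e_4-w\,e_2e_4+v\,e_2e_3,
\]
which is not divisible by $c_{\{xy,xz\}}=x$. Yet $e_1,\dots,e_4$ is exactly the multihomogeneous basis you would choose, so $\varphi(g_{\{xy,xz\}})$ cannot be defined. Also, $\partial(e_1*e_2)=xy\,e_2-xz\,e_1$ is divisible by $x$, so divisibility cannot be propagated from the differential as in your first sketched route.

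The conclusion of the theorem does hold for $*$: $\phi^{-1}$ is a dg algebra isomorphism from the multigraded structure to $*$, so the basis $\phi^{-1}(e_i)$ works. This pinpoints what is missing. You need either an argument that the given structure is isomorphic to a $\ZZ^n$-graded one, or a choice of degree-one basis adapted to the given product.

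If you add the hypothesis that the dg algebra structure is multigraded, your proof is complete. That hypothesis is the natural reading in this monomial setting, and it is how the theorem is used in this paper, where everything is a quotient of the Taylor resolution. You should also state the key fact correctly. It is false that $\mathbb{G}_{\mathbf a}=0$ for non-squarefree $\mathbf a$. The correct statement is this: the basis of $\mathbb{G}$ lies in squarefree degrees, so every element of multidegree $\mathbf a$ is a combination of basis elements of degrees $\mathbf b\le\sqrt{\mathbf a}$, with coefficients divisible by $x^{\mathbf a-\sqrt{\mathbf a}}$. When $x^{\mathbf a}=\prod_j u_{i_j}$, this factor $x^{\mathbf a-\sqrt{\mathbf a}}$ is exactly $c_U$.
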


Throughout, let $\mathbb{F}$ be the Taylor resolution on $\la zx \mid x \in \mathcal{C} \ra$, $\mathbb{G}$ be the minimal free resolution of $Q/I(G)$, and $\mathbb{K}$ be the Koszul complex on $\mathcal{C}$, i.e., resolving the quotient by $\la x \mid x \in \mathcal{C}\ra$.  For $W$ a subset of the relevant generating set, the basis elements of $\mathbb{F}$ will be denoted $f_W$ and the basis elements of $\mathbb{K}$ will be denoted $e_W$.  

Suppose $\mathbb{G}$ is a dg algebra.  By Theorem \ref{structurethm}, there is a surjection $\pi:\mathbb{T} \onto \mathbb{G}$ whose kernel $\mathbb{J}$ is a dg ideal of $\mathbb{T}$, where $\mathbb{T}$ is the Taylor resolution on the edges of $G$ \cite{Kat}.  Viewing $\mathbb{G}$ as this quotient, we denote basis elements of $\mathbb{G}$ by $g_W$, where $W$ is a subset of the generators of $I(G)$.  We will likewise write $g_W$ for the generators of the Taylor resolution $\mathbb{T}$ of which $\GG$ is a quotient.      


Moving forward, we will abuse notation by writing the edge $\{x_i,x_j\} \in E(G)$ as the quadratic $x_ix_j$. This simplifies the notation in our following definitions.
\begin{defn}
    Let $G$ be a graph and $\cC$ a vertex cover of $G$. We call $\fX : E(G) \to  V(G)$ a \textbf{cover map (of $(G,\cC)$)} if $\fX(w) \in \cC$ for all $w \in E(G)$. We say $\fY : E(G) \to V(G)$ is the \textbf{co-cover map (with respect to $\fX$)} if $\fY(w) = w / \fX(w)$ for all $w \in E(G)$.
\end{defn}

To match the prior work of the first, second, and fourth authors in \cite{GMP}, we introduce the following notation.
\begin{defn}\label{xWyW}
    Consider a graph $G$, and fix a vertex cover $\cC$ and maps $\fX$ and $\fY$ as defined above. For every subset $W \subseteq E(G)$, we set $x_W := \lcm \{\fX(w) : w \in W\}$ and $y_W := \lcm \{\fY(w) : w \in W \}$. Further, we set $d_W := \gcd \{x_W, y_W\}$.
\end{defn}

This notation appears in \cite[Section 4]{GMP} as part of an explicit construction of the minimal free resolution of the edge ideal of a diameter-four tree. A key difference is that the prior work did not consider $d_W$. In fact, translating this notation to the prior work, one would find $d_W = 1$, since in that case no two vertices in the vertex cover are adjacent to each other. In other words, for any edge $w$, there was only one choice for $\fX(w)$.

This changes in our current context since we are considering any finite, simple graph $G$ and any vertex cover $\cC$. Thus, for some edge $w = x_i x_j \in E(G)$ it is possible that $x_i, x_j \in \cC$, and so one can choose $\fX(w) = x_i$ or $\fX(w) = x_j$. If one chooses $\fX(w) = x_i$, then $\fY(w) = x_j$. Now suppose there exists another vertex $x_{\ell} \in V(G)$ such that $x_{\ell} \notin \cC$ but $w' = x_j x_{\ell} \in E(G)$, then $\fX(w') = x_j$. As a consequence, if $w, w' \in W \subseteq E(G)$, then $x_j$ divides both $x_W$ and $y_W$. This means $x_j$ divides $d_W$, hence $d_W \neq 1$. Since $x_W$ and $y_W$ are not necessarily coprime, we note the following lemma.
\begin{lem}\label{lcm*gcd}
    Consider the set-up of Definition \ref{xWyW}. If $\emptyset \neq W \subseteq E(G)$, then $x_W y_W = d_W \lcm W$. 
\end{lem}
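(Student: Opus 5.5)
Since every monomial involved is squarefree, I will verify the identity one variable at a time. Each of $x_W$, $y_W$, $d_W$, $\lcm W$ is a squarefree monomial: $x_W$ is an lcm of variables, $y_W$ is an lcm of variables, $d_W$ is a gcd of squarefree monomials, and $\lcm W$ is an lcm of squarefree quadratics. Hence $x_W y_W$ has every exponent in $\{0,1,2\}$, while $d_W \lcm W$ also has exponents at most $2$. It therefore suffices to show, for each vertex $v$, that the exponent of $v$ agrees on both sides.

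Fix a vertex $v$ and write $\chi_X(v)=1$ if $v\mid x_W$ and $0$ otherwise, and define $\chi_Y(v)$ similarly for $y_W$. The key observation is that, for each edge $w\in W$, we have $w=\fX(w)\fY(w)$ with $\fX(w)\neq\fY(w)$, since $\fY(w)=w/\fX(w)$ and $G$ is simple. Thus $v$ divides some $w\in W$ if and only if $v=\fX(w)$ or $v=\fY(w)$ for some $w\in W$, which holds if and only if $v\mid x_W$ or $v\mid y_W$. The exponent of $v$ in $x_Wy_W$ is $\chi_X(v)+\chi_Y(v)$. The exponent of $v$ in $\lcm W$ is $\max(\chi_X(v),\chi_Y(v))$, because $v\mid\lcm W$ if and only if $v$ lies in some edge of $W$. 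The exponent of $v$ in $d_W=\gcd\{x_W,y_W\}$ is $\min(\chi_X(v),\chi_Y(v))$, since both $x_W$ and $y_W$ are squarefree.

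The identity $\min+\max=\text{sum}$ for the pair $(\chi_X(v),\chi_Y(v))$ then gives the equality of exponents at every $v$, and hence $x_Wy_W=d_W\lcm W$. The hypothesis $W\neq\emptyset$ only ensures that the lcms are over nonempty sets; even then the convention $\lcm\emptyset=1$ would make the identity hold trivially. There is no real obstacle here. The only point needing care is the identification of the support of $\lcm W$ with the union of the supports of $x_W$ and $y_W$, which rests on $w=\fX(w)\fY(w)$ for every edge $w$.
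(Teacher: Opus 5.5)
Your proof is correct and follows the same route as the paper. The paper uses $w=\fX(w)\fY(w)$ to get $\lcm W=\lcm\{x_W,y_W\}$ and then applies $x_Wy_W=\gcd\{x_W,y_W\}\lcm\{x_W,y_W\}$; your identification of the support of $\lcm W$ together with the exponentwise identity $\min+\max=\text{sum}$ is exactly this argument, written out one variable at a time.
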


\begin{proof}
    Note that for any $w \in W$, we have $w = \fX(w) \fY(w)$ which allows us to observe
    \begin{align*}
        \lcm W &= \lcm \{\fX(w)\fY(w) : w \in W \} \\
         &= \lcm \left\{\lcm \{\fX(w) : w \in W \}, \lcm \{\fY(w) : w \in W \} \right\} \\
         &= \lcm \{x_W, y_W\}.
    \end{align*}
    The result then follows from the fact that $x_W y_W = \gcd \{x_W, y_W\} \lcm \{x_W, y_W\}$.
\end{proof}

As an immediate consequence, we can prove the following.
\begin{lem}
\label{lem:Lcm}
    Consider the set-up of Definition \ref{xWyW}, and suppose $W, V \subseteq E(G)$. Set $\fX(W) := \{\fX(w): w \in W\}$. If $\fX(W) \cap \fX(V) = \emptyset$, then \[\frac{\lcm W \lcm V}{\lcm (W \cup V)} = \frac{y_W y_V d_{W \cup V} }{d_W d_V y_{W \cup V}}.\]
\end{lem}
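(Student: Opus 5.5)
The plan is to reduce the identity to Lemma \ref{lcm*gcd} together with one observation about the monomials $x_W$. First I would dispose of the degenerate case where $W$ or $V$ is empty. For $W=\emptyset$, the conventions give $\lcm W = x_W = y_W = 1$, and hence $d_W = \gcd\{1,1\} = 1$. Both sides then reduce to $\lcm V/\lcm V = 1$ and $y_V d_V/(d_V y_V) = 1$ respectively, and symmetrically for $V=\emptyset$. So I may assume $W$, $V$, and therefore $W\cup V$, are all nonempty.

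In that case, Lemma \ref{lcm*gcd} applies to each of $W$, $V$, and $W \cup V$, giving
\[
\lcm W = \frac{x_W y_W}{d_W},\qquad \lcm V = \frac{x_V y_V}{d_V},\qquad \lcm(W\cup V) = \frac{x_{W\cup V}\, y_{W\cup V}}{d_{W\cup V}}.
\]
Substituting these into the left-hand side, it becomes
\[
\frac{\lcm W \lcm V}{\lcm (W \cup V)} = \frac{x_W x_V}{x_{W\cup V}}\cdot \frac{y_W y_V\, d_{W\cup V}}{d_W d_V\, y_{W\cup V}}.
\]
It therefore suffices to prove that $x_W x_V = x_{W\cup V}$.

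This is the only point where the hypothesis $\fX(W)\cap\fX(V)=\emptyset$ is used. Each $\fX(w)$ is a single variable, so $x_W=\lcm\,\fX(W)$ is the squarefree product $\prod_{x\in\fX(W)} x$, and likewise for $x_V$. Since $\fX(W\cup V)=\fX(W)\cup\fX(V)$ and this union is disjoint, we get
\[
x_{W\cup V}=\prod_{x\in \fX(W)\cup\fX(V)} x = x_W x_V.
\]
Substituting this back gives the claimed identity. All quotients involved are genuine monomials or at least well-defined rational expressions in the polynomial ring, so no division issues arise.

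I do not expect a real obstacle here. The only points needing care are the empty-set convention, which Lemma \ref{lcm*gcd} excludes, and the observation that the lcm of distinct variables is their product. No analogous simplification is available for the $y$'s: the sets $\fY(W)$ and $\fY(V)$ may overlap, which is exactly why $y_{W\cup V}$ and the $d$ terms survive on the right-hand side.
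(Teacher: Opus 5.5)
Your proposal is correct and matches the paper's proof: you apply Lemma \ref{lcm*gcd} to $W$, $V$ and $W\cup V$, then use $\fX(W)\cap\fX(V)=\emptyset$ to get $x_Wx_V=x_{W\cup V}$. You also treat the case where $W$ or $V$ is empty separately, which the paper does not do and which is needed because Lemma \ref{lcm*gcd} assumes the set is nonempty.
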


\begin{proof}
    Observe that
\begin{align*}
    \frac{\lcm W \lcm V}{\lcm (W \cup V)} &= \frac{\left(\frac{x_Wy_W}{d_W}\right) \left(\frac{x_Vy_V}{d_V}\right)}{\left(\frac{x_{W\cup V} y_{W \cup V}}{d_{W \cup V}}\right)} \\
     &= \frac{x_W x_V}{x_{W \cup V}} \frac{y_W y_V d_{W \cup V} }{d_W d_V y_{W \cup V}}.
\end{align*}
Since $\fX(W) \cap \fX(V) = \emptyset$, we have $x_W x_V = x_{W \cup V}$, and thus the above reduces to the desired equation.
\end{proof}


To set up the following lemmas, fix a total order $<_1$ on the elements of $\cC$. Let $<_2$ be a total order on $E(G)$ such that for any $w, w' \in E(G)$,  if $\fX(w) <_1 \fX(w')$, then $w <_2 w'$. Since it will be clear from the context, we will denoted both of these total orders by $<$.

\begin{lem}\label{phi}
    Define $\varphi:\mathbb{T} \to \mathbb{K}$ by sending 1 to 1 and each $g_W$ to $\frac{y_W}{d_W}e_{W'}$, where $W' = \mathfrak{X}(W)$ if $|W|=|\mathfrak{X}(W)|$ and $e_{W'}=0$ if not.  Then, $\varphi$ is a chain map of dg algebras. 
\end{lem}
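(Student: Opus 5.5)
The proof splits into three checks: \(\varphi\) is well defined, it is multiplicative, and it commutes with the differentials. Since \(\varphi(1)=1\) and \(\varphi\) is \(Q\)-linear by definition, this suffices. Throughout, I would use two facts. First, \(d_W=\gcd\{x_W,y_W\}\) divides \(y_W\), so \(y_W/d_W\) is a monomial and \(\varphi\) is a well-defined graded \(Q\)-linear map. Second, whenever \(\fX\) is injective on \(W\), the monomial \(x_W\) is the product \(\prod_{w\in W}\fX(w)\), because the elements of \(\fX(W)\) are distinct variables.

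The compatibility of the orders does most of the sign bookkeeping. Since \(\fX(w)<\fX(w')\) forces \(w<w'\), whenever \(\fX\) is injective on \(V\cup W\) we have \(\sigma(V,W)=\sigma(\fX(V),\fX(W))\) and \(\sigma(w,W)=\sigma(\fX(w),\fX(W))\). Also, edges with a common \(\fX\)-value are consecutive in \(<_2\).

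\emph{Multiplicativity.} Take \(g_V,g_W\) and split into cases.
\begin{enumerate}[label=(\arabic*)]
\item If \(\fX\) is not injective on \(V\) or on \(W\), both \(\varphi(g_V)\varphi(g_W)\) and \(\varphi(g_Vg_W)\) vanish. The latter is either \(0\) or a multiple of \(\varphi(g_{V\cup W})=0\).
\item If \(\fX\) is injective on \(V\) and on \(W\) but not on \(V\cup W\), then \(\fX(V)\cap\fX(W)\neq\emptyset\). This covers \(V\cap W\neq\emptyset\). Then \(e_{\fX(V)}e_{\fX(W)}=0\) in \(\mathbb{K}\), and \(\varphi(g_Vg_W)\) is \(0\) or a multiple of \(\varphi(g_{V\cup W})=0\).
\item If \(\fX\) is injective on \(V\cup W\), then \(\fX(V)\cap\fX(W)=\emptyset\). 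Lemma \ref{lem:Lcm} gives
\[
\frac{\lcm V\lcm W}{\lcm(V\cup W)}\cdot\frac{y_{V\cup W}}{d_{V\cup W}}=\frac{y_Vy_W}{d_Vd_W},
\]
and the signs agree by the order compatibility above.
\end{enumerate}

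\emph{Chain map.} In degree one, \(d_w=\gcd(\fX(w),\fY(w))=1\), so \(\partial\varphi(g_w)=\fY(w)\fX(w)=w=\varphi\partial(g_w)\). For \(|W|\geq 2\), write \(\partial g_W=\sum_{w}\pm\frac{\lcm W}{\lcm(W\setminus w)}g_{W\setminus w}\) and use Lemma \ref{lcm*gcd} to rewrite \(\lcm W=x_Wy_W/d_W\), and likewise for \(W\setminus w\). This gives
\[
\frac{\lcm W}{\lcm (W\setminus w)}\cdot\frac{y_{W\setminus w}}{d_{W\setminus w}}=\frac{x_W}{x_{W\setminus w}}\cdot\frac{y_W}{d_W}.
\]
Now split on the injectivity of \(\fX\) on \(W\).
\begin{enumerate}[label=(\arabic*)]
\item If \(\fX\) is injective on \(W\), then \(x_W/x_{W\setminus w}=\fX(w)\), and each term matches the corresponding term of \(\partial\big(\tfrac{y_W}{d_W}e_{\fX(W)}\big)\), with signs matching since \(\sigma(w,W)=\sigma(\fX(w),\fX(W))\).
\item If \(\fX\) is not injective on \(W\), then \(\varphi(g_W)=0\) and we must show \(\varphi\partial g_W=0\). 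A term survives only if \(\fX\) is injective on \(W\setminus w\). If there are two or more collisions, no term survives. Otherwise there is exactly one pair \(w_1,w_2\) with \(\fX(w_1)=\fX(w_2)\), and only \(w\in\{w_1,w_2\}\) survives. In that case \(x_{W\setminus w_i}=x_W\), so both surviving coefficients equal \(y_W/d_W\), and both terms involve \(e_{\fX(W)}\). Because \(w_1,w_2\) are adjacent in \(<_2\), we have \(\sigma(w_2,W)=\sigma(w_1,W)+1\), so the signs are opposite and the two terms cancel.
\end{enumerate}

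The main obstacle is this last case, the cancellation when \(\fX\) fails to be injective on \(W\). It is exactly where the choice of \(<_2\) refining \(<_1\) through \(\fX\) is needed. It also needs the observation that removing one of two colliding edges leaves \(x_W\) unchanged, so the two coefficients coincide.
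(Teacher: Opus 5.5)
Your proposal is correct and follows the paper's proof essentially step for step. It uses the same split into $\fX$ injective versus non-injective on $W$ for the chain-map check, with Lemma \ref{lcm*gcd} rewriting the Taylor coefficients and the adjacency of the colliding pair $w_1,w_2$ in $<_2$ giving the cancellation, and the same use of Lemma \ref{lem:Lcm} together with $\sigma(V,W)=\sigma(\fX(V),\fX(W))$ for multiplicativity. (Your collision-case coefficient $y_W/d_W$ is in fact the right one; the paper writes an extraneous factor $\fX(w_1)$, which is harmless because it cancels.)

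One small slip is in multiplicativity case (3). Injectivity of $\fX$ on the set $V\cup W$ does not force $\fX(V)\cap\fX(W)=\emptyset$ when $V\cap W\neq\emptyset$ (take $V=W$). You should dispose of $V\cap W\neq\emptyset$ first, where $g_Vg_W=0$ and $e_{\fX(V)}e_{\fX(W)}=0$, exactly as the paper does.
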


\begin{proof}
    Since $I(G) \subseteq \la x \mid x \in \mathcal{C} \ra$, there is a surjection $Q/I(G) \onto Q/\la x \mid x \in \mathcal{C} \ra$.  It is straightforward to see that
   \[
\begin{tikzcd}
\mathbb{T}_0 \arrow[r, two heads] \arrow[d, "\text{id}"] & Q/I(G) \arrow[d, two heads] \\
\mathbb{K}_0 \arrow[r, two heads]                        & Q/\langle x \mid x \in \mathcal{C} \rangle        
\end{tikzcd}
    \]
    commutes.  We now look at higher-degree cases.  

    We begin with the case that $e_{W'}\neq 0$.  In this case, 
    \begin{align*}
        \partial^\mathbb{K}(\varphi(g_W))&=\partial^\mathbb{K}\left(\frac{y_W}{d_W}e_{W'}\right)\\
        &=\frac{y_W}{d_W}\partial^\mathbb{K}(e_{W'})\\
        &=\frac{y_W}{d_W}\sum_{\mathfrak{X}(w) \in W'} (-1)^{\sigma(\fX(w),W')} \fX(w) e_{W'\smallsetminus\{\fX(w)\}}.
    \end{align*}
    Since $e_{W'} \neq 0$ implies $e_{(W \smallsetminus \{w\})'} = e_{W' \smallsetminus \{\fX(w)\}}$, we also find
    \begin{align*}
        \varphi(\partial^\mathbb{T}(g_W))&=\varphi\left(\sum_{w \in W}(-1)^{\sigma(w,W)}\dfrac{\lcm W}{\lcm(W\smallsetminus\{w\})}g_{W\smallsetminus\{w\}}\right)\\
        &= \sum_{w \in W}(-1)^{\sigma(w,W)}\dfrac{x_W y_W d_{W \smallsetminus\{w\}}}{x_{W\smallsetminus\{w\}} y_{W\smallsetminus\{w\}} d_W} \left(\dfrac{y_{W \smallsetminus \{w\}}}{d_{W \smallsetminus \{w\}}} e_{(W\smallsetminus\{w\})'}\right) \\
        &= \sum_{w \in W}(-1)^{\sigma(w,W)}\dfrac{x_W y_W}{x_{W\smallsetminus\{w\}} d_W} e_{W'\smallsetminus\{\fX(w)\}} \\
        &= \dfrac{y_W}{d_W} \sum_{w \in W}(-1)^{\sigma(w,W)} \fX(w) e_{W'\smallsetminus\{\fX(w)\}} \\
        &=\frac{y_W}{d_W}\sum_{\mathfrak{X}(w) \in W'}(-1)^{\sigma(\fX(w),W')}\fX(w) e_{W'\smallsetminus\{\fX(w)\}},
    \end{align*}
where $\sigma(w,W)$ is the number of elements of $W$ which $w$ is greater than under $<$.  

In the case that $e_{W'}=0$, there are distinct $w_1,w_2 \in W$ such that $\fX(w_1)=\fX(w_2)$.  Without loss of generality, $\sigma(w_1,W)=\sigma(w_2,W)+1$.  Then,
\begin{align*}
    \partial^\mathbb{T}(g_W)&=(-1)^{\sigma(w_1,W)}\dfrac{\lcm W}{\lcm(W \smallsetminus\{w_1\})}g_{W \smallsetminus\{w_1\}}-(-1)^{\sigma(w_1,W)}\dfrac{\lcm W}{\lcm(W \smallsetminus\{w_2\})}g_{W \smallsetminus\{w_2\}}\\
    &+\sum_{w \in W\smallsetminus\{w_1,w_2\}}(-1)^{\sigma(w,W)}\dfrac{\lcm W}{\lcm(W\smallsetminus \{w\})}g_{W\smallsetminus \{w\}}.
\end{align*}
Since $w_1,w_2 \in W \smallsetminus\{w\}$ for all $w\in W\smallsetminus\{w_1,w_2\}$, the summation 
\[
\sum_{w \in W\smallsetminus\{w_1,w_2\}}(-1)^{\sigma(w,W)}\dfrac{\lcm W}{\lcm(W\smallsetminus \{w\})}g_{W\smallsetminus \{w\}}
\]
goes to zero under $\varphi$.  Now, since $\fX(w_1) = \fX(w_2)$, we have $\fX(W \smallsetminus \{w_1\}) = \fX(W) = \fX(W \smallsetminus \{w_2\})$. It follows that $e_{(W \smallsetminus \{w_1\})'} = e_{(W \smallsetminus \{w_2\})'}$ and

\begin{align*}
    (-1)^{\sigma(w_1,W)}\varphi\left(\dfrac{\lcm W}{\lcm(W \smallsetminus\{w_1\})}g_{W \smallsetminus\{w_1\}} - \dfrac{\lcm W}{\lcm(W \smallsetminus\{w_2\})}g_{W \smallsetminus\{w_2\}}\right) \hspace{-10cm} &  \\
    & = (-1)^{\sigma(w_1,W)} \left(\dfrac{\lcm W}{\lcm(W \smallsetminus\{w_1\})}\frac{y_{W \smallsetminus\{w_1\}}}{d_{{W \smallsetminus\{w_1\}}}}e_{(W \smallsetminus\{w_1\})'} - \dfrac{\lcm W}{\lcm(W \smallsetminus\{w_2\})}\frac{y_{W \smallsetminus\{w_2\}}}{d_{{W \smallsetminus\{w_2\}}}}e_{(W \smallsetminus\{w_2\})'} \right) \\
    & = (-1)^{\sigma(w_1,W)}\left(\dfrac{x_Wy_W}{d_Wx_{W \smallsetminus\{w_1\}}}e_{(W \smallsetminus\{w_1\})'} - \dfrac{x_Wy_W}{d_Wx_{W \smallsetminus\{w_2\}}}e_{(W \smallsetminus\{w_2\})'}\right) \\
    & = (-1)^{\sigma(w_1,W)}\left(\dfrac{y_W}{d_W} \fX(w_1) e_{(W \smallsetminus\{w_1\})'} - \dfrac{y_W}{d_W} \fX(w_2) e_{(W \smallsetminus\{w_2\})'}\right) \\
    & = (-1)^{\sigma(w_1,W)}\left(\dfrac{y_W}{d_W} \fX(w_1) e_{(W \smallsetminus\{w_1\})'} - \dfrac{y_W}{d_W} \fX(w_1) e_{(W \smallsetminus\{w_1\})'}\right) \\
    &=0.
\end{align*}
 
Since $\partial^\mathbb{K}(\varphi(g_W))=0$, as well, the proof that $\varphi$ is a chain map is complete.

To see that $\varphi$ is a map of dg algebras, we begin by examining $\varphi(g_Vg_W)$.  If $V\cap W=\emptyset$, then
\begin{align*}
    \varphi(g_Vg_W)&=\varphi\left((-1)^{\sigma(V,W)}\dfrac{\lcm V\lcm W}{\lcm (V \cup W)}g_{V \cup W}\right) \\
    & = (-1)^{\sigma(V,W)}\dfrac{\lcm V\lcm W}{\lcm (V \cup W)}\varphi\left(g_{V \cup W}\right) \\
    & = (-1)^{\sigma(V,W)}\dfrac{\lcm V\lcm W}{\lcm (V \cup W)} \frac{y_{V \cup W}}{d_{V \cup W}} e_{(V \cup W)'},
\end{align*}
where $e_{(V \cup W)'} \neq 0$ if and only if $|\fX(V)| = |V|$, $|\fX(W)| = |W|$, and $\fX(V) \cap \fX(W) = \emptyset$.  Here, $\sigma(V,W)$ is the number of elements $(v,w)$ of $V \times W$  with $v>w$.
It follows that either $e_{(V \cup W)'} = 0$ or we can apply Lemma \ref{lem:Lcm} to get 
\[\varphi(g_Vg_W) = (-1)^{\sigma(V,W)} \dfrac{y_V}{d_V}\dfrac{y_W}{d_W} e_{(V \cup W)'}.\]

By definition of $\varphi$, we have $\varphi(g_V)\varphi(g_W)=\frac{y_V}{d_V}\frac{y_W}{d_W}e_{V'}e_{W'}$, which is equal to $\varphi(g_V g_W)$ if and only if $e_{V'} e_{W'} = (-1)^{\sigma(V,W)} e_{(V \cup W)'}$. If $e_{V'} = 0$, then, by definition, we must have $|\fX(V)| \neq |V|$. In turn, this means $|\fX(V \cup W)| \neq |V \cup W|$ and $e_{(V \cup W)'} = 0$. As such, we have
\[e_{V'} e_{W'} = 0 \cdot e_{W'} = 0 = (-1)^{\sigma(V,W)} \cdot 0 = (-1)^{\sigma(V,W)} e_{(V \cup W)'}.\]
A similar argument holds for when $e_{W'} = 0$.

We may now assume that $e_{V'} \neq 0 \neq e_{W'}$ meaning $e_{V'} = e_{\fX(V)}$ and $e_{W'} = e_{\fX(W)}$. If $\fX(V) \cap \fX(W) \neq \emptyset$, then the Koszul product gives us $e_{\fX{V}} e_{\fX{W}} = 0$. Moreover, we have $|\fX(V \cup W)| = |\fX(V) \cup \fX(W)| < |V \cup W|$ and hence
\[(-1)^{\sigma(V,W)} e_{(V \cup W)'} = (-1)^{\sigma(V,W)} \cdot 0 = 0 = e_{\fX(V)} e_{\fX(W)} = e_{V'} e_{W'}.\]

Finally, if $\fX(V) \cap \fX(W) = \emptyset$, then we have $|\fX(V \cup W)| = |V \cup W|$, and thus $e_{(V \cup W)'} = e_{\fX(V \cup W)}$. Moreover, the correspondence on the total order on $E(G)$ with the total order on $\cC$ gives $\sigma(V,W) = \sigma(\fX(V),\fX(W))$. Hence, the Koszul product produces
\begin{align*}
    e_{V'} e_{W'} & = e_{\fX(V)} e_{\fX(W)} \\
     & = (-1)^{\sigma(\fX(V),\fX(W))} e_{\fX(V) \cup \fX(W)} \\
     & = (-1)^{\sigma(\fX(V),\fX(W))} e_{\fX(V \cup W)} \\
     & = (-1)^{\sigma(V,W)} e_{(V \cup W)'}.
\end{align*}
Thus, it holds that $e_{V'} e_{W'} = (-1)^{\sigma(V,W)} e_{(V \cup W)'}$ and implies $\varphi(g_V) \varphi(g_W) = \varphi(g_V g_W)$ when $V \cap W = \emptyset$.

On the other hand, if $V \cap W \neq \emptyset$, then $\varphi(g_Vg_W)=0=\varphi(g_V)\varphi(g_W)$.  Thus, $\varphi$ is a map of dg algebras.   
\end{proof}

Standard homological algebra tells us that there is a chain map $\tau:\mathbb{G} \to \mathbb{K}$ lifting the surjection $Q/I \onto Q/\la x \mid x \in \mathcal{C} \ra$.  The next lemma says that we may take $\tau$ as we took $\varphi$.   

\begin{lem}\label{tau}
    The map $\tau:\mathbb{G} \to \mathbb{K}$ may be taken to be given by $\tau(1)=1$ and $\tau(g_W)=\frac{y_W}{d_W}e_{W'}$, where the notation is as in Lemma \ref{phi}.  
\end{lem}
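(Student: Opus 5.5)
The plan is to factor $\varphi$ from Lemma \ref{phi} through the quotient $\pi:\mathbb{T}\onto\mathbb{G}\cong\mathbb{T}/\mathbb{J}$ supplied by Theorem \ref{structurethm}. Concretely, define $\tau:\mathbb{G}\to\mathbb{K}$ by $\tau(\pi(t))=\varphi(t)$. If this is well defined, then $\tau$ is automatically a chain map and a homomorphism of dg algebras, because $\pi$ and $\varphi$ both are and $\pi$ is surjective. It also lifts $Q/I(G)\onto Q/\langle x\mid x\in\mathcal{C}\rangle$, since $\tau_0=\varphi_0=\mathrm{id}_Q$. On the basis elements $g_W$ of $\mathbb{G}$, viewed as images of the Taylor basis, it is given by $\tau(g_W)=\frac{y_W}{d_W}e_{W'}$, which is the asserted formula. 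So the entire content reduces to showing $\varphi(\mathbb{J})=0$.

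To prove $\varphi(\mathbb{J})=0$, I would use minimality and the multigrading. Both $\mathbb{T}$ and $\mathbb{K}$ are $\mathbb{Z}^{V\cup\{z\}}$-graded, and $\varphi$ is multihomogeneous. Indeed, $g_W$ has multidegree $\lcm W = x_Wy_W/d_W$ by Lemma \ref{lcm*gcd}, while $\frac{y_W}{d_W}e_{W'}$ has multidegree $\frac{y_W}{d_W}x_W$ when $e_{W'}\neq 0$. The isomorphism in Theorem \ref{structurethm} is one of multigraded dg algebras, so $\mathbb{J}$ can be taken multigraded. Since $\mathbb{G}=\mathbb{T}/\mathbb{J}$ is a minimal resolution, the kernel $\mathbb{J}$ is a split exact subcomplex. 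In particular, $\mathbb{J}$ is generated in each homological degree $i$ by elements $j$ with $\partial j\in\mathbb{J}_{i-1}$, and each $\mathbb{J}_i$ decomposes as $B_i\oplus\partial(B_{i+1})$, where $\partial|_{B_i}$ is injective and $\mathbb{J}$ is acyclic.

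The key step is then an induction on homological degree. First, $\varphi(\mathbb{J}_0)=0$, since $\mathbb{J}_0=0$ and $\mathbb{T}_0=\mathbb{G}_0=Q$. Suppose $\varphi(\mathbb{J}_{i-1})=0$ and take $j\in\mathbb{J}_i$. Then $\partial^{\mathbb{K}}\varphi(j)=\varphi(\partial j)=0$, so $\varphi(j)$ is a cycle of $\mathbb{K}_i$. I then need to show this cycle vanishes, and here the $\Bbbk$-structure enters. For $j=\partial(b)$ with $b\in\mathbb{J}_{i+1}$ it is immediate, because $\varphi(j)=\partial\varphi(b)$ and, by induction applied one step up, we can arrange things so that the image is in $\mathfrak{m}$ times a boundary. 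The cleaner route is the following. Since $\mathbb{J}$ is exact, contractible and free, we can choose its generators so that $\mathbb{J}$ is spanned by pairs $(b,\partial b)$ with $\partial b$ having a unit coefficient. We can also replace $\mathbb{J}$ by any dg ideal complement, because Theorem \ref{structurethm} only asserts an isomorphism. A more robust alternative is to argue purely up to homotopy. Any two lifts $\mathbb{G}\to\mathbb{K}$ are homotopic, so it is enough to exhibit one lift whose formula on $g_W$ is as stated. That lift is $\varphi\circ s$, where $s:\mathbb{G}\to\mathbb{T}$ is the multigraded section of $\pi$ sending $g_W\mapsto g_W$. This section is a chain map precisely when the Taylor differential of $g_W$, for $g_W$ a surviving basis element, involves only surviving basis elements modulo $\mathbb{J}$, which is how the $g_W$ notation for $\mathbb{G}$ was set up.

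The main obstacle is this compatibility: checking that $\varphi$ kills $\mathbb{J}$, or equivalently that $\varphi\circ s$ commutes with differentials. I would handle it with the multidegree count above. The element $\varphi(j)$ is a cycle of $\mathbb{K}$ of multidegree $\le\lcm W$ for the relevant $W$. Moreover, $\mathbb{K}$ is minimal, and cycles of $\mathbb{K}_i$ for $i\ge1$ lie in $\mathfrak{m}\mathbb{K}_i$ and are boundaries. Combining this with the direct-sum decomposition $\mathbb{T}=s(\mathbb{G})\oplus\mathbb{J}$, I expect to be able to modify $\varphi$ on $\mathbb{J}$ by a homotopy. This yields a chain map $\mathbb{T}\to\mathbb{K}$ that agrees with $\varphi$ on the $g_W$ surviving in $\mathbb{G}$ and vanishes on $\mathbb{J}$, and it then descends to the desired $\tau$.
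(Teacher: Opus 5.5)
Your proof has a genuine gap: it reduces the lemma to $\varphi(\mathbb{J})=0$, and that containment is false in general. The paper treats it as an extra condition. By Lemma \ref{Lem:tau-dgcondition} it is equivalent to $\tau$ being a dg algebra morphism, and Example \ref{ex:C_5nogood} shows it fails for the dg graph $C_5$ with $\mathcal{C}=\{x,z,v\}$. There every admissible $\mathbb{J}$ contains $g_{\{xy,yz,xv\}}$, while $\varphi_1(g_{\{xy,yz,xv\}})=y\,e_{\{x,z,v\}}\neq 0$. So factoring $\varphi$ through $\pi$ would show that $\tau$ is always multiplicative, which is false, and re-choosing $\mathbb{J}$ does not help. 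Your induction breaks at $i=2$. We have $\mathbb{J}_1=0$, but $j=\partial^{\mathbb{T}}(g_{\{xy,yz,xv\}})$ lies in $\mathbb{J}_2$, because $\mathbb{J}$ is a subcomplex, and
$\varphi_1(j)=\partial^{\mathbb{K}}\varphi_1(g_{\{xy,yz,xv\}})=y\,\partial^{\mathbb{K}}(e_{\{x,z,v\}})\neq 0$.
That $\varphi(j)$ is a cycle in $\mathfrak{m}\mathbb{K}$, hence a boundary, says nothing about whether it vanishes.

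Your fallbacks do not repair this. The section $s\colon g_W\mapsto g_W$ is not a chain map: for $C_5$, $\partial^{\mathbb{T}}(g_{\{xy,yz,uv\}})$ involves $g_{\{yz,uv\}}$ and $g_{\{xy,uv\}}$, which are not basis elements of $\mathbb{G}$. Modifying $\varphi$ to a chain map $\mathbb{T}\to\mathbb{K}$ that agrees with $\varphi$ on $s(\mathbb{G})$ and vanishes on $\mathbb{J}$ is equivalent to $\varphi\circ s$ being a chain map on $\mathbb{G}$. That is the lemma itself, so the step is circular.

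What must be checked, and what the paper's one-line proof refers to by repeating the computation of Lemma \ref{phi}, is $\partial^{\mathbb{K}}\tau(g_W)=\tau(\partial^{\mathbb{G}}g_W)$ on the basis of $\mathbb{G}$. Here $\partial^{\mathbb{G}}g_W=\pi(\partial^{\mathbb{T}}g_W)$ has to be rewritten in that basis. This only requires $\varphi$ to kill the particular elements $\partial^{\mathbb{T}}g_W-s\pi(\partial^{\mathbb{T}}g_W)\in\mathbb{J}$ for the chosen representatives $W$. That is strictly weaker than $\varphi(\mathbb{J})=0$, and it depends on the representatives. In the $C_5$ example, label the generator of $\mathbb{G}_3$ by $\{xy,yz,uv\}$. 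Then $\tau(g_W)=yu\,e_{\{x,z,v\}}$, which satisfies the chain condition in degree $3$. The label $\{yz,zu,xv\}$ also gives a basis element, but it yields $\tau(g_W)=0$, which violates the chain condition. No representative-independent argument, such as factoring through $\pi$, can detect this distinction.
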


\begin{proof}
    The proof the $\tau$ is a chain map lifting the surjection $Q/I \onto Q/\la x \mid x \in \mathcal{C} \ra$ is essentially the same as the analog for $\varphi$, found in the proof of Lemma \ref{phi}.  
\end{proof}

Henceforth, $\tau$ will be assumed to have the definition in Lemma \ref{tau}.

\begin{lem}
\label{Lem:tau-dgcondition}
    The map $\tau$ is a dg morphism if and only if $\mathbb{J} \subseteq \ker \varphi$.
\end{lem}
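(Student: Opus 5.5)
The plan is to compare $\tau$ with $\varphi$ through the projection $\pi:\mathbb{T} \onto \mathbb{G}$ of Theorem \ref{structurethm}, with kernel $\mathbb{J}$. The key identity is
\[
\tau \circ \pi = \varphi,
\]
and it holds on basis elements: $\pi(g_W) = g_W$ whenever $g_W$ survives in $\mathbb{G}$. Both maps send $g_W$ to $\frac{y_W}{d_W}e_{W'}$. For basis elements of $\mathbb{T}$ that do not survive, their images in $\mathbb{G}$ are expressed via $\pi$ as combinations of surviving basis elements. The honest formulation is therefore that $\tau$ is defined as the map induced by $\varphi$ on the quotient whenever this makes sense.

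More carefully, I would first settle what $\tau\circ\pi=\varphi$ means. Since $\mathbb{G}\cong \mathbb{T}/\mathbb{J}$ and $\tau$ is given by the same formula as $\varphi$ on the basis $g_W$ of $\mathbb{G}$, I would check that $\tau\circ\pi$ and $\varphi$ agree on the chosen basis lifts. I then reduce the lemma to the question of whether $\varphi$ factors through $\pi$. The two directions follow.

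($\Leftarrow$) Suppose $\mathbb{J}\subseteq\ker\varphi$. Then $\varphi$ descends to a map $\bar\varphi:\mathbb{T}/\mathbb{J}=\mathbb{G}\to\mathbb{K}$ with $\bar\varphi\circ\pi=\varphi$. Since $\pi$ is a surjective dg algebra map and $\varphi$ is a dg algebra map by Lemma \ref{phi}, we get
\[
\bar\varphi(\pi(s)\pi(t)) = \bar\varphi(\pi(st)) = \varphi(st) = \varphi(s)\varphi(t) = \bar\varphi(\pi(s))\bar\varphi(\pi(t)),
\]
so $\bar\varphi$ is multiplicative and hence a dg morphism. On basis elements $\bar\varphi(g_W)=\varphi(g_W)=\frac{y_W}{d_W}e_{W'}=\tau(g_W)$, so $\bar\varphi=\tau$.

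($\Rightarrow$) Suppose $\tau$ is a dg algebra morphism. Then $\tau\circ\pi$ and $\varphi$ are both dg algebra maps $\mathbb{T}\to\mathbb{K}$. $\mathbb{T}$ is generated as a $Q$-algebra by the degree-one elements $g_{\{w\}}$: indeed $g_V g_W$ equals a unit multiple of $g_{V\cup W}$ times $\lcm V\lcm W/\lcm(V\cup W)$, which is $1$ for singletons. Since $\pi(g_{\{w\}})=g_{\{w\}}$, the two maps agree on these generators, as $\tau(g_{\{w\}})=\frac{y_w}{d_w}e_{\fX(w)}=\varphi(g_{\{w\}})$. Hence $\tau\circ\pi=\varphi$, and therefore $\mathbb{J}=\ker\pi\subseteq\ker(\tau\circ\pi)=\ker\varphi$.

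The main obstacle is the bookkeeping of identifying the basis of $\mathbb{G}$ with a subset of the Taylor basis via $\pi$. Concretely, one must verify that $\pi(g_{\{w\}})=g_{\{w\}}$ in degree one, which holds because $\mathbb{G}_1\cong\mathbb{T}_1$ for minimally generated ideals. One must also verify that in the forward direction the descended map agrees with the stated formula for $\tau$ on every surviving basis element $g_W$. I expect to handle both points by taking the basis of $\mathbb{G}$ to be the images $\pi(g_W)$, as the paper's notation suggests.
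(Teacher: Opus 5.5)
Your overall route is the paper's: both reduce the lemma to commutativity of the triangle $\tau\circ\pi=\varphi$. The paper simply asserts that each condition is equivalent to it. Your ($\Leftarrow$) direction is fine; add that $\bar\varphi$ is a chain map because $\pi$ is a surjective chain map, or because $\bar\varphi=\tau$ and Lemma \ref{tau} applies. Your basis bookkeeping also matches the paper's convention. One can choose the basis of $\mathbb{G}$ among the images $\pi(g_W)$ by graded Nakayama, since $\pi$ is a multigraded surjection, and the argument works for any such choice.

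The problem is in ($\Rightarrow$): the claim that $\mathbb{T}$ is generated as a $Q$-algebra by $\mathbb{T}_1$ is false. For singletons, $\lcm\{v\}\lcm\{w\}/\lcm\{v,w\}=\gcd(v,w)$, which is a variable whenever the edges $v,w$ share a vertex. For example, $g_{\{x_1x_2\}}g_{\{x_2x_3\}}=\pm x_2\,g_{\{x_1x_2,x_2x_3\}}$, and $g_{\{x_1x_2,x_2x_3\}}$ does not lie in the subalgebra generated by $\mathbb{T}_1$. In fact the Taylor algebra is generated in degree one only when the generators are pairwise coprime. So ``two algebra maps agreeing on generators coincide'' does not yield $\tau\circ\pi=\varphi$ as written. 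This step is not automatic either: dg algebra lifts of the same map are not unique in general, so some argument is genuinely needed. The paper's one-line proof leaves it implicit.

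The repair is short. For $W=\{w_1<\cdots<w_k\}$ one has $g_{\{w_1\}}\cdots g_{\{w_k\}}=\pm c_W\,g_W$, where $c_W=\prod_i w_i/\lcm W$ is a nonzero monomial. The maps $\tau\circ\pi$ and $\varphi$ are both multiplicative and agree on $\mathbb{T}_0\oplus\mathbb{T}_1$; this is your degree-one check, using that $\pi_1$ is an isomorphism and $d_{\{w\}}=1$. Hence $c_W\bigl(\tau\pi(g_W)-\varphi(g_W)\bigr)=0$. Since $\mathbb{K}$ is a free module over the domain $Q$, this gives $\tau\pi(g_W)=\varphi(g_W)$. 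Equivalently, $\mathbb{T}$ becomes generated in degree one after inverting the variables, and $\mathbb{K}$ embeds in its localization. With this torsion-freeness argument in place, your proof is complete, and it supplies the justification the paper omits for the implication that $\tau$ being a dg morphism forces the triangle to commute.
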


\begin{proof}
    The proof here is just noting that the given conditions are equivalent to the triangle
    \[
\begin{tikzcd}
\mathbb{T} \arrow[d, "\pi"'] \arrow[rd, "\varphi"] &            \\
\mathbb{G} \arrow[r, "\tau"']                      & \mathbb{K}
\end{tikzcd}
    \]
    commuting.  
\end{proof}

We now use $\tau$ to build a map $\alpha$, the mapping cone of which will be the a key ingredient in the construction of the minimal free resolution of $Q/I(G^\mathcal{C})$.

\begin{lem}\label{mulem}
    Define $\mu^z:\mathbb{K} \to \mathbb{F}$ by sending $1$ to $z$ and $e_{x_1,\ldots,x_q}$ to $f_{zx_1,\ldots,zx_q}$.  The map
    \[
    \alpha:=\mu^z \circ \tau:\mathbb{G} \to \mathbb{F}
    \]
    is a chain map, but it is not a map of dg algebras.   
\end{lem}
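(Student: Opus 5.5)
Since $\tau$ is already a chain map by Lemma \ref{tau}, the approach is to show that $\mu^z$ is a chain map $\mathbb{K}\to\mathbb{F}$. The composite $\alpha=\mu^z\circ\tau$ is then a chain map. To see that it is not multiplicative, I will exhibit a failure of unitality or multiplicativity in low degree.

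First I check that $\mu^z$ is a chain map. In degree $0$, the augmentations send $1\in\mathbb{K}_0$ to $1\in Q/\la x\mid x\in\cC\ra$, and $z\in\mathbb{F}_0$ to $z\in Q/\la zx\mid x\in\cC\ra$. Multiplication by $z$ induces a map $Q/\la x\mid x\in\cC\ra\to Q/\la zx\mid x\in\cC\ra$, and $\mu^z$ lifts it. For $q\ge1$ and $W=\{x_1<\dots<x_q\}\subseteq\cC$, I order the generators $zx$ of $\mathbb{F}$ compatibly with $<$ on $\cC$, so that the sign $\sigma(zx_j,zW)$ agrees with the Koszul sign $(-1)^{j-1}$. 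The lcm ratio in the Taylor differential is
\[
\frac{\lcm\{zx_1,\dots,zx_q\}}{\lcm(\{zx_1,\dots,zx_q\}\smallsetminus\{zx_j\})},
\]
which equals $x_j$ when $q\ge2$ and $zx_1$ when $q=1$. In the case $q=1$ we get
\[
\partial^{\mathbb{F}}(f_{zx_1})=zx_1\cdot 1_{\mathbb{F}}=x_1\,\mu^z(1)=\mu^z(\partial^{\mathbb{K}}e_{x_1}).
\]
For $q\ge2$ the two differentials match term by term. Hence $\mu^z$ commutes with the differentials, and so does $\alpha$, since $\tau$ does.

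Next I show that $\alpha$ is not a map of dg algebras. The cleanest witness is the unit: $\alpha(1)=\mu^z(\tau(1))=z\neq1$, so $\alpha$ is not a homomorphism of (unital) graded algebras. If a multiplicative witness is preferred, take $e_{x_i}e_{x_j}=\pm e_{x_i,x_j}$ with $x_i\neq x_j\in\cC$. Then $\mu^z(e_{x_i})\mu^z(e_{x_j})=f_{zx_i}f_{zx_j}=\pm z\,f_{zx_i,zx_j}$, by the Taylor product in Example \ref{taylordef}. On the other hand $\mu^z(e_{x_i}e_{x_j})=\pm f_{zx_i,zx_j}$. These differ by the factor $z$, so $\mu^z$ is not multiplicative. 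To transfer this failure to $\alpha$, I need edges $w,w'$ with $\fX(w)=x_i\neq\fX(w')=x_j$ such that $\tau(g_wg_{w'})\neq0$. Unitality already suffices, so I would rely on that and mention the multiplicative discrepancy only as an illustration.

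The main obstacle is the bookkeeping of signs and lcm's in the chain map check, in particular the degree-one case where the lcm ratio is $zx_1$ rather than $x_1$. This is absorbed by the fact that $\mu^z(1)=z$.
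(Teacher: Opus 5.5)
Your proof is correct and follows the paper's argument: you reduce to checking that $\mu^z$ is a chain map (the degree-one lcm ratio $zx$ is absorbed by $\mu^z(1)=z$, and the ratio is $x_j$ in higher degrees), and you detect the failure of multiplicativity in degree $0$. The paper states that failure as $\alpha(1\cdot 1)=z\neq z^2=\alpha(1)\alpha(1)$, which uses the same element as your $\alpha(1)=z\neq 1$ but does not rely on unitality being part of the definition of a dg algebra map.
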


\begin{proof}
Note that $z=\mu^z(1)=\mu^z(1 \cdot 1)$, but $\mu^z(1)\cdot \mu^z(1) =z^2$, so that $\mu^z$, and thus $\alpha$, is not a map of dg algebras.  

    By Lemma \ref{tau}, $\tau$ is a chain map.  So, it remains to show that $\mu^z$ is a chain map.  Indeed,
     \[
\begin{tikzcd}
\mathbb{K}_1 \arrow[r,"\partial^{\KK}_1"] \arrow[d, "\mu^z_1"] & \mathbb{K}_0 \arrow[d,"\cdot z"] \\
\mathbb{F}_1 \arrow[r,"\partial^{\FF}_1"]                        & \mathbb{F}_0
\end{tikzcd}
    \]
    commutes because $\partial_1^\FF(f_{zx})=zx=z \cdot \partial^\KK_1(e_x)$.  In higher degrees
    \begin{align*}
        \mu^z\left(\partial^\KK(e_W)\right)&=\mu^z\left(\sum_{i=1}^{|W|} (-1)^{i-1}x_ie_{W\smallsetminus\{x_i\}}\right)\\
        &=\sum_{i=1}^{|W|} (-1)^{i-1}x_if_{W_z\smallsetminus\{zx_i\}},
    \end{align*}
    where $W=\{x_1<\ldots<x_{|W|}\}$ and $W_z=\{zx_1<\ldots<zx_{|W|}\}$.  Furthermore,
    \begin{align*}
        \partial^\FF \left(\mu^z(e_W)\right)&=\partial^\FF \left(f_{W_z}\right)\\
        &=\sum_{i=1}^{|W|} (-1)^{i-1}x_if_{W_z\smallsetminus\{zx_i\}},
    \end{align*}
    completing the proof.  
\end{proof}

Regardless of the fact that $\alpha$ is not a dg morphism, we can still recover a dg module over $\mathbb{G}$.

\begin{lem}\label{newmodstructure}
    If $\tau$ is a dg morphism, then the complex $M:=\Cone{\alpha_{\geq 1}}[1]$ is a dg module over $\mathbb{G}$.
\end{lem}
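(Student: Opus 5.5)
The plan is to imitate part (a) of Theorem~\ref{thm-Betadef}, with $A=\GG$ and $B=\FF$, even though $\alpha=\mu^z\circ\tau$ is not a dg algebra map. The module structure defined there only ever uses $\alpha$ through expressions of the form $\alpha(a)b$ with $a\in\GG_{\geq 1}$ and $b\in\FF$. So what we really need is a multiplicative compatibility weaker than $\alpha$ being a dg algebra homomorphism.

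Since $\tau$ is a dg morphism, we have $\alpha(ac)=\mu^z(\tau(a)\tau(c))$. The key observation is that $\mu^z$ is $\KK$-semilinear in the following sense. Define $\iota:\KK\to\FF$ by $\iota(1)=1$ and $\iota(e_W)=f_{W_z}$ for $W\neq\emptyset$. Using the Taylor product on $\FF$, this gives
\[
f_{V_z}f_{W_z}=(-1)^{\sigma(V,W)}\frac{\lcm V_z\lcm W_z}{\lcm (V\cup W)_z}f_{(V\cup W)_z}=(-1)^{\sigma(V,W)}z\,f_{(V\cup W)_z}
\]
when $V\cap W=\emptyset$, and $0$ otherwise. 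Here the total order on $\{zx\}$ is induced from $<$ on $\cC$. Hence $\iota(e_V)\iota(e_W)=z\,\iota(e_Ve_W)$ for $V,W\neq\emptyset$, while $\mu^z=\iota$ in positive degrees and $\mu^z(1)=z$.

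I would then give $\FF$ a $\GG$-action by $a\cdot b:=\iota(\tau(a))b$ for $a\in\GG_{\geq1}$, using the Taylor product of $\FF$. I need to check two things.
\begin{enumerate}
\item It is associative. The map $\iota\circ\tau$ is not multiplicative, so I would instead replace $\alpha$ by the genuinely multiplicative map $\alpha':=\iota\circ\tau$, viewed as a map into the dg algebra $\FF$ modified by the twisted product $b*b'=bb'/z$ on the image of $\iota$. A cleaner route is to observe that $\FF_{\geq1}$ is spanned by $f_{W_z}$ and that every product there is divisible by $z$. Then $\FF_{\geq1}$ with product $b\star b'=z^{-1}bb'$ is a graded algebra, and $\iota$ becomes a graded algebra map $\KK_{\geq 1}\to(\FF_{\geq 1},\star)$. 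Checking that $\star$ is associative is just Taylor associativity with the $\lcm$ factors all equal to $z$.
\item It satisfies the Leibniz rule $\partial^\FF(a\cdot b)=\partial^\GG(a)\cdot b\pm a\cdot\partial^\FF b$. On degree-zero pieces this produces $\alpha=\mu^z\tau$ exactly: $\partial^\FF(\iota(e_x))=zx=\mu^z(\partial^\KK e_x)$. This is where the factor $z$ reappears.
\end{enumerate}

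Next, I define the action on $M=\Cone{\alpha_{\geq1}}[1]$ by the formulas of Theorem~\ref{thm-Betadef}(a), with $\alpha(a)b$ replaced by $a\cdot b$, namely $a(b,c)=((-1)^{|a|}a\cdot b,\,ac)$ and, for $m\in M_0=\FF_1$, $am=((-1)^{|a|}a\cdot m,\,-a\,\partial^\FF(m)/z)$. The second component of the latter must land in $\GG$. Since $M_0=\FF_1\to\GG_0$ is the $0$-degree piece of the cone of $\alpha_{\geq 1}$, its structure needs care. I would read off the correct second component by redoing the computation of $\partial^M(am)$ from the proof of Theorem~\ref{thm-Betadef}(a) and choosing whatever term makes it vanish appropriately; I expect this to be $-a\,\tau$-preimage data, that is, the entry $x$ from $\partial^\FF(f_{zx})=zx$.

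The verification then repeats the case computations of Theorem~\ref{thm-Betadef}(a): associativity and additivity of the action, and $\partial^M(am)=\partial(a)m+(-1)^{|a|}a\partial^M(m)$ in the cases $m\in M_{\geq1}$ and $m\in M_0$. At each point where that proof used $\partial^B\alpha=\alpha\partial^A$ together with multiplicativity of $\alpha$, I would substitute the chain map property of $\alpha$ (Lemma~\ref{mulem}) and the compatibility $\partial^\FF(a\cdot b)=\dots$.

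I expect the main obstacle to be the degree-zero and degree-one boundary. There the factor $z=\mu^z(1)$ breaks the identity $\alpha(a)\alpha(c)=\alpha(ac)$, since the two sides differ by $z$. The whole difficulty is confirming that the $\star$-product absorbs this discrepancy consistently with the cone differential $-\partial^\FF(b)-\alpha(c)$. I would attack this first by testing the case $a=g_{\{w\}}$, $m=(f_{zx},g_{\{w'\}})$ explicitly.
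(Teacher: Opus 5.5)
This is essentially the paper's proof. Your $\star$-twisted action $a\cdot b=\iota(\tau(a))\star b=z^{-1}\alpha(a)b$, together with the second component $-z^{-1}a\,\partial^{\FF}(m)$ on $M_0=\FF_1$, is exactly the modified action the paper uses. It is justified by the identity $\alpha(ac)=z^{-1}\alpha(a)\alpha(c)$, which is your observation that $\iota\circ\tau\colon\GG_{\geq 1}\to(\FF_{\geq 1},\star)$ is multiplicative, and after that the checks go through as in Theorem~\ref{thm-Betadef}(a). Make sure you use the twisted action throughout: the untwisted $a\cdot b=\iota(\tau(a))b$ you first wrote fails both associativity and the Leibniz rule against the $-\alpha(c)$ term of $\partial^M$, in each case by a factor of $z$.
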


\begin{proof}
  By definition, $\alpha = \mu^z \circ \tau$.  Since $\tau$ is a dg morphism and $\mu^z(ab) = \frac{1}{z}\mu^z(a)\mu^z(b)$ for all $a,b \in \KK$, we have
    \[
    \alpha(ab) = \tfrac{1}{z}\alpha(a)\alpha(b)
    \]
     for all $a,b \in \GG$.  Changing the action in Theorem \ref{thm-Betadef} (a) to 
    \[
    am =
    \begin{cases}
    ((-1)^{|a|}\frac{1}{z}\alpha(a)b,ac), & m=(b,c)\in M_{\geq 1}^{\natural}\\
    ((-1)^{|a|}\frac{1}{z}\alpha(a)m, -\frac{1}{z}a\partial^B(m)), & m \in M_0=\FF_1, a \in \GG_{\geq 1}^{\natural},
    \end{cases}
    \]
    a similar proof to that of Theorem \ref{thm-Betadef} (a) shows that $\Cone{\alpha_{\geq 1}}[1]$ is a dg module over $\GG$.
\end{proof}

For a graph $G$ and a vertex cover $\mathcal{C}$ of $G$, define the directed graph $D(G,\mathcal{C})$ to have vertex set the vertex set of $G$ and directed edge $(a,b)$ if and only if $\{a,b\}$ is an edge in $G$ and $b \in \mathfrak{X}(E(G))$.  By definition of $\varphi$ (Lemma \ref{phi}), and thus $\tau$ (Lemma \ref{tau}), we observe that $\varphi$, and thus $\tau$, will produce a unit coefficient on an output (in positive degrees) precisely when $y_W = d_W$ for some $W \subseteq E(G)$.  The only way to get $y_W=d_W$ is if $W$ is an oriented cycle in $D(G,\mathcal{C})$.  Thus, we have the following result.

\begin{lem}\label{mintau}
    The map $\tau$ satisfies $\tau_{\geq 1} \otimes \Bbbk=0$ if and only if $D(G,\mathcal{C})$ has no oriented cycles.
\end{lem}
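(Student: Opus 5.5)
The plan is to reduce the statement to a purely combinatorial condition on the coefficient $y_W/d_W$, and then to match that condition with oriented cycles in $D(G,\mathcal{C})$. I orient each edge $w$ as the arc $\fY(w)\to\fX(w)$; this is the orientation implicit in the definition of $D(G,\mathcal{C})$.

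By \cref{tau}, in positive degrees $\tau(g_W)=\frac{y_W}{d_W}e_{W'}$. This is nonzero only when $\fX$ is injective on $W$. All monomials involved are squarefree, and $d_W=\gcd(x_W,y_W)$. Hence $\tau(g_W)\otimes\Bbbk\neq 0$ exactly when $\fX|_W$ is injective and $y_W\mid x_W$. The second condition says that every $\fY(w)$, for $w\in W$, lies in $\fX(W)$. So $\tau_{\geq 1}\otimes\Bbbk\neq 0$ if and only if some basis element $g_W$ of $\GG$ has $W$ satisfying:
\begin{enumerate}[label=(\roman*)]
\item $\fX|_W$ is injective, and
\item $\fY(W)\subseteq\fX(W)$.
\end{enumerate}

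Suppose $W$ satisfies (i) and (ii). Injectivity gives a well-defined self-map $s$ of the finite set $\fX(W)$, namely $s(\fX(w))=\fY(w)$. Iterating $s$ from any point eventually produces a periodic orbit $v_1\mapsto v_2\mapsto\cdots\mapsto v_k\mapsto v_1$. Each step $v_i\mapsto v_{i+1}$ is the arc $\fY(w_i)\to\fX(w_i)$ traversed backwards, so reversing the orbit gives an oriented cycle of $D(G,\mathcal{C})$.

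I must rule out a degenerate orbit. A fixed point is impossible, since $\fX(w)\neq\fY(w)$. A $2$-cycle would force two elements $w\neq w'$ of $W$ with $\fX(w)=\fY(w')$ and $\fX(w')=\fY(w)$. Then $w=w'$ as edges, which is a contradiction. So the orbit is a genuine oriented cycle.

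Conversely, let $W=\{w_1,\dots,w_k\}$ be the edge set of an oriented cycle. Then $\fX$ is a bijection from $W$ onto the cycle's vertices, and $\fY(W)=\fX(W)$. Hence $y_W=x_W=d_W$, and $\varphi(g_W)=\pm e_{\fX(W)}$ has unit coefficient.

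The main obstacle is this converse direction. It produces a unit coefficient for the Taylor generator $g_W$ under $\varphi$, but the statement concerns $\tau$ on $\GG=\mathbb{T}/\mathbb{J}$, and $g_W$ need not survive as a basis element of $\GG$.

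My first attempt is to pass to $\operatorname{Tor}$. The map $\tau\otimes\Bbbk$ is, up to homotopy, the canonical map $\operatorname{Tor}^Q(Q/I(G),\Bbbk)\to\operatorname{Tor}^Q(Q/\la x\mid x\in\mathcal{C}\ra,\Bbbk)$. I would work in the squarefree multidegree $\fX(W)\cup\fY(W)=\fX(W)$, the vertex set of the cycle. It suffices to show this map is nonzero in homological degree $k$ and that multidegree.

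For that I would find a cycle in $\mathbb{T}\otimes\Bbbk$ in that multidegree whose image under $\varphi\otimes\Bbbk$ is $\pm e_{\fX(W)}$, for instance a suitable signed sum of the $g_{W}$'s of all oriented cycles on those vertices. Alternatively I would use Hochster's formula on the induced subgraph on the cycle's vertices.

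If this fails, I would reinterpret ``basis elements of $\GG$'' as the images $\pi(g_W)$ and compare directly via \cref{Lem:tau-dgcondition}. That would require the $\varphi=\tau\pi$ setting, and I expect that this is what the authors implicitly intend.
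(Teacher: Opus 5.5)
Your proof of the direction ``no oriented cycle $\Rightarrow \tau_{\geq1}\otimes\Bbbk=0$'' is correct, and it is the paper's argument done more carefully. The paper asserts that a unit coefficient ($y_W=d_W$) forces $W$ to \emph{be} an oriented cycle. Your functional-graph argument correctly shows only that $W$ \emph{contains} one, and it rules out loops and $2$-cycles. The gap is the converse, and it cannot be closed because it is false for $\tau$. Conditions (i) and (ii) give $\lcm W=\lcm(x_W,y_W)=x_W$, a monomial of degree $|W|$. So such a $g_W$ would be a generator of $\GG_{|W|}$ of internal degree $|W|$. But $I(G)$ is generated by quadrics, so for $k\geq1$ the module $\GG_k$ is generated in degrees $\geq k+1$, whereas $\KK_k$ is generated in degree exactly $k$. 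Hence no basis element of $\GG$ satisfies (i) and (ii) at all. Since $\tau$ is homogeneous, $\tau_{\geq1}\otimes\Bbbk=0$ for \emph{every} $G$, $\mathcal{C}$ and $\fX$, whether or not $D(G,\mathcal{C})$ has oriented cycles. Concretely, take $G=K_3$ on $a,b,c$ with $\mathcal{C}=\{a,b,c\}$ and $\fX(ab)=a$, $\fX(bc)=b$, $\fX(ca)=c$. Then $D(G,\mathcal{C})$ is an oriented triangle, yet $\GG_3=0$. In degree one, $\tau(g_{ab})=b\,e_a$, $\tau(g_{bc})=c\,e_b$, $\tau(g_{ca})=a\,e_c$. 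In degree two, $\tau(g_{\{ab,bc\}})=c\,e_{\{a,b\}}$, $\tau(g_{\{ab,ca\}})=b\,e_{\{a,c\}}$, $\tau(g_{\{bc,ca\}})=a\,e_{\{b,c\}}$. All these coefficients are non-units.

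This also explains why both of your repair strategies must fail. For the Tor approach, Hochster's formula gives $\operatorname{Tor}^Q_k(Q/I(G),\Bbbk)_\sigma\cong\tilde H_{-1}(\Delta|_\sigma;\Bbbk)=0$ whenever $|\sigma|=k\geq1$, where $\Delta$ is the independence complex and $\Delta|_\sigma$ has vertices. So every cycle of $\mathbb{T}\otimes\Bbbk$ in that multidegree is a boundary, and $\varphi\otimes\Bbbk$ kills it. In fact $g_W\otimes1$ is not even a cycle, since deleting one edge of a cycle leaves $\lcm W$ unchanged. For your fallback, $\pi(g_W)$ lands in an internal degree where $\GG_k$ vanishes, so $g_W\in\mathbb{J}$ while $\varphi(g_W)=e_{\fX(W)}\neq0$. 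Thus an oriented cycle forces $\mathbb{J}\not\subseteq\ker\varphi$. By Lemma \ref{Lem:tau-dgcondition}, this obstructs $\tau$ from being a dg morphism, which is how the cycle is used in Example \ref{ex:C_5nogood}; it does not obstruct minimality. Your two halves together do prove the stated equivalence for $\varphi\colon\mathbb{T}\to\KK$. The paper's justification passes from $\varphi$ to $\tau$ (``$\varphi$, and thus $\tau$'') at exactly the step you flagged. Only the ``if'' direction is used later (Lemma \ref{middleconeres}), and by the degree argument above it holds unconditionally.
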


\begin{lem}\label{middleconeres}
    The complex $\Cone{\alpha_{\geq 1}}[1]$ resolves $\dfrac{\la zx \mid x \in \mathcal{C}\ra+zI(G)}{zI(G)}$ over $Q$.  If $D(G,\mathcal{C})$ has no oriented cycles, then $\Cone{\alpha_{\geq 1}}[1]$ is minimal.
\end{lem}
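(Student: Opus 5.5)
We view $\Cone{\alpha_{\geq 1}}[1]$ as the mapping cone of a chain map between resolutions of two explicit modules, and then read off its homology from the long exact sequence.

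\textbf{Identifying the two complexes.} The complex $\GG_{\geq 1}$, shifted down by one, resolves $I(G)$, with augmentation $\GG_1 \to I(G)$. Likewise $\FF_{\geq 1}$, shifted, resolves $J:=\la zx \mid x\in\cC\ra$. Since $\alpha=\mu^z\circ\tau$ and $\tau_0=\mathrm{id}$, $\mu^z_0=\cdot z$, Lemmas \ref{tau} and \ref{mulem} give that $\alpha_{\geq 1}$ is a chain map lifting multiplication by $z$, namely $I(G)\to J$, $g\mapsto zg$. This map is well defined because $\cC$ is a vertex cover, so every edge lies in $\la x\mid x\in\cC\ra$ and hence $zI(G)\subseteq J$. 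It is also injective, since $z$ is a nonzerodivisor on $Q$. Note that over $Q=\Bbbk[V\cup\{z\}]$ the complex $\GG\otimes_{\Bbbk[V]}Q$ is still a free resolution of $I(G)Q$, by flatness.

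\textbf{Computing the homology of the cone.} For an injective map $\iota:N\to P$ and a chain map $\tilde\iota:F\to F'$ between free resolutions lifting it, the long exact sequence of the cone gives
\[
H_i(\Cone{\tilde\iota})\cong\begin{cases}\coker\iota,& i=0,\\ 0,& i\neq 0.\end{cases}
\]
Indeed, $H(F)\to H(F')$ is concentrated in degree $0$, where it is $\iota$, and injectivity of $\iota$ kills the connecting homology in degree $1$. After the shift by $[1]$ and the re-indexing forced by $M_0=\FF_1$, this identifies the unique nonzero homology of $M$ as
\[
J/zI(G)=\dfrac{\la zx\mid x\in\cC\ra+zI(G)}{zI(G)},
\]
sitting in degree $0$. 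I will check the indexing against the explicit description $M_0=B_1=\FF_1$ and $M_i=\FF_{i+1}\oplus\GG_i$ used in Theorem \ref{thm-Betadef}, since the degree bookkeeping there is where errors are most likely.

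\textbf{Minimality.} Both $\FF$ and $\GG$ are minimal: $\FF$ is the Taylor resolution of $\la zx \mid x\in\cC\ra$, which is isomorphic to the Koszul complex on $\cC$ scaled by $z$ and is minimal because any two lcm's of distinct subsets differ. So the differential of the cone tensored with $\Bbbk$ reduces to $\alpha_{\geq1}\otimes\Bbbk$. By construction $\mu^z$ has no unit entries in positive degrees, so $\alpha_{\geq 1}\otimes\Bbbk=\mu^z\otimes\Bbbk\circ\tau_{\geq1}\otimes\Bbbk$. By Lemma \ref{mintau}, $\tau_{\geq1}\otimes\Bbbk=0$ exactly when $D(G,\cC)$ has no oriented cycles, and in that case the cone is minimal.

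The main obstacle I expect is the careful degree and sign bookkeeping of the shift and the $M_0$ term, so that the homology lands exactly in degree $0$ with the stated module.
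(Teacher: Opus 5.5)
Your proposal is correct and follows the paper's proof essentially step for step. Both arguments take the long exact sequence of the cone, whose inputs have single homology modules $I(G)$ and $\la zx \mid x\in\cC\ra$, use that multiplication by $z$ is injective to kill the extra homology, and invoke Lemma \ref{mintau} for minimality; your indexing worry also resolves cleanly, since $F_{i-1}\oplus F'_i=\GG_i\oplus\FF_{i+1}=M_i$ for $i\geq 1$ and $F'_0=\FF_1=M_0$. One small slip: $\mu^z$ \emph{does} have unit entries in positive degrees (it sends $e_W\mapsto f_{W_z}$ with coefficient $1$), but you don't need that claim, because $\alpha_{\geq1}\otimes\Bbbk=(\mu^z\otimes\Bbbk)\circ(\tau_{\geq1}\otimes\Bbbk)$ holds by functoriality and vanishes as soon as $\tau_{\geq1}\otimes\Bbbk=0$.
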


\begin{proof}
    Since $\GG$ and $\FF$ are resolutions, the long exact sequence in homology associated to $\Cone{\alpha_{\geq 1}}$ reduces to
    \[
    0 \xrightarrow[]{} H_2(\Cone{\alpha_{\geq 1}}) \xrightarrow[]{} H_1(\GG_{\geq 1}) \xrightarrow[]{} H_1(\FF_{\geq 1}) \xrightarrow[]{} H_1(\Cone{\alpha_{\geq 1}}) \xrightarrow[]{} 0,
    \]
    which, since $\GG$ resolves $Q/I(G)$ and $\FF$ resolves $Q/\la zx \mid x \in \mathcal{C}\ra$, may be written
    \[
    0 \xrightarrow[]{} H_2(\Cone{\alpha_{\geq 1}}) \xrightarrow[]{} I(G) \xrightarrow[]{\cdot z} \la zx \mid x \in \mathcal{C}\ra \xrightarrow[]{} H_1(\Cone{\alpha_{\geq 1}}) \xrightarrow[]{} 0.
    \]
    Since $z$ is a nonzerodivisor on $Q$, we see that $H_2(\Cone{\alpha_{\geq 1}})=0$ and
    \begin{align*}
    H_0(\Cone{\alpha_{\geq 1}}[1])&=H_1(\Cone{\alpha_{\geq 1}})\\
    &=\dfrac{\la zx \mid x \in \mathcal{C}\ra}{zI(G)}\\
    &=\dfrac{\la zx \mid x \in \mathcal{C}\ra}{\la zx \mid x \in \mathcal{C}\ra \cap zI(G)}\\
    &\cong \dfrac{\la zx \mid x \in \mathcal{C}\ra+zI(G)}{zI(G)},
    \end{align*}
    and we see that $\Cone{\alpha_{\geq 1}}[1]$ resolves what we claimed it does.  Both $\GG$ and $\FF$ are minimal, and $\alpha_{\geq 1} \otimes \Bbbk=0$ if $\tau$ produces coefficients in positive homological degrees, which is the case if $D(G,\mathcal{C})$ has no oriented cycles (Lemma \ref{mintau}).  Thus, in this case, $\Cone{\alpha_{\geq 1}}[1]$ is minimal.  
\end{proof}

\begin{lem}\label{isos}
    There is an isomorphism
    \[
    \dfrac{\la zx \mid x \in \mathcal{C}\ra+zI(G)}{zI(G)} \cong \dfrac{\la zx \mid x \in \mathcal{C}\ra+I(G)}{I(G)}.
    \]
\end{lem}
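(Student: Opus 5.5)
Write $J=\la zx \mid x \in \mathcal{C}\ra$. Both quotients in Lemma \ref{isos}, by the second isomorphism theorem, reduce to
\[
\frac{J+zI(G)}{zI(G)} \cong \frac{J}{J\cap zI(G)}, \qquad \frac{J+I(G)}{I(G)} \cong \frac{J}{J\cap I(G)}.
\]
The plan is therefore to prove the equality of ideals $J \cap zI(G) = J \cap I(G)$. The isomorphism then follows by composing these natural isomorphisms. The composite sends the class of $j\in J$ to the class of $j$, so no choices are involved.

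One inclusion is immediate, since $zI(G)\subseteq I(G)$.

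For the reverse inclusion, I would use that all ideals involved are monomial ideals. An intersection of monomial ideals is monomial, generated by the least common multiples of pairs of generators. So it suffices to check that every monomial $u\in J\cap I(G)$ lies in $zI(G)$.

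\begin{itemize}
\item Since $u\in J$, the variable $z$ divides $u$.
\item Since $u\in I(G)$, some edge $x_ix_j\in E(G)$ divides $u$.
\item Because $z\notin V$, the monomials $z$ and $x_ix_j$ involve disjoint variables. Hence $z\,x_ix_j$ divides $u$.
\end{itemize}

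Therefore $u\in zI(G)$. Equivalently, $\lcm(zx, x_ix_j)$ is divisible by $zx_ix_j$ for every $x\in\mathcal{C}$ and every edge $x_ix_j$. This gives $J\cap I(G)\subseteq zI(G)$, and combined with $J\cap I(G)\subseteq J$ we get $J \cap I(G)\subseteq J\cap zI(G)$.

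I do not expect a real obstacle here. The only point requiring care is the reduction to monomials: we need that membership in a monomial ideal can be tested on monomial terms, and that $J\cap I(G)$ is generated by monomials. Both are standard facts for monomial ideals in $Q=\Bbbk[V\cup\{z\}]$. If a graded (multigraded) isomorphism is wanted, it holds as well, because every map above preserves the $\mathbb{Z}^{|V|+1}$-grading.
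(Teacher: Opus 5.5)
Your proposal is correct and follows essentially the paper's approach: reduce via the Second Isomorphism Theorem to the equality $J \cap zI(G) = J \cap I(G)$, where the inclusion $\subseteq$ is clear. For the reverse inclusion the paper uses $J \cap I(G) \subseteq (z) \cap I(G) = zI(G)$, and your monomial divisibility argument is exactly a justification of $(z)\cap I(G) = zI(G)$, a fact the paper states without proof.
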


\begin{proof}
    This will follow from the Second Isomorphism Theorem if we can show that
    \[
    \la zx \mid x \in \mathcal{C}\ra \cap\, zI(G) = \la zx \mid x \in \mathcal{C}\ra \cap I(G).
    \]
    That the left-hand side is contained in the right-hand side is clear.  For the other containment, we note that
    \[
    \la zx \mid x \in \mathcal{C}\ra \cap I(G) \subseteq (z) \cap I(G)=zI(G),
    \]
    and so
    \[
    \la zx \mid x \in \mathcal{C}\ra \cap I(G)=\la zx \mid x \in \mathcal{C}\ra \cap\la zx \mid x \in \mathcal{C}\ra \cap I(G)\subseteq \la zx \mid x \in \mathcal{C}\ra \cap zI(G).
    \]
\end{proof}

We are now ready to construct the minimal free resolution of $Q/I(G^\mathcal{C})$.  

\begin{thm}\label{suspensionres}
    Let $M=\Cone{\alpha_{\geq 1}}[1]$.  Define $\bar\beta:M \to \mathbb{G}$ by
    \[
    \bar\beta(m) = \begin{cases} z\beta(m), & m= (b,c) \in M_{\geq 1}^{\natural}\\
        \beta(m), & m \in M_0 = \FF_1,
        \end{cases}
    \]
    where $\beta$ is as in Theorem \ref{thm-Betadef}, with $A=\GG$ and $B=\FF$.  When $D(G,\mathcal{C})$ does not have any oriented cycles, $C:=\Cone{\bar\beta}$ is the minimal free resolution of $Q/I(G^\mathcal{C})$.  Furthermore, if $\tau$ is a dg morphism, then $\Cone{\bar\beta}$ is a dg algebra.   
\end{thm}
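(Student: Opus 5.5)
The proof has three parts: show that $C=\Cone{\bar\beta}$ resolves $Q/I(G^\mathcal{C})$, show that it is minimal, and, when $\tau$ is a dg morphism, put a dg algebra structure on it.

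\textbf{Resolution.} We first check that $\bar\beta$ is a chain map $M\to\GG$. On $M_{\geq 1}$ we have $\beta(b,c)=-c$, so scaling by $z$ commutes with the differentials. The only place needing care is the passage from $M_1$ to $M_0$. For $m=(b,c)\in M_1$ with $b\in \FF_2$ and $c\in\GG_1$, $\partial^M(m)=-\partial^\FF(b)-\alpha(c)$, so we must have
\[
\beta\partial^M(m)=-\partial^\FF(\alpha(c)) = -z\,\partial^\GG(c).
\]
This holds because $\partial^\FF\partial^\FF=0$, $\alpha=\mu^z\tau$ is a chain map, and $\mu^z$ is multiplication by $z$ in degree $0$. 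So the extra factor $z$ on $M_{\geq1}$ is exactly what makes $\bar\beta$ a chain map.

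Next, $\bar\beta$ lifts the inclusion
\[
\frac{\la zx \mid x \in \mathcal{C}\ra+I(G)}{I(G)}\hookrightarrow Q/I(G).
\]
Here we identify $H_0(M)$ with this module via Lemmas \ref{middleconeres} and \ref{isos}: the isomorphism of Lemma \ref{isos} divides by $z$, which is compensated on $M_0=\FF_1$ by $\beta=\partial^\FF$, sending $f_{zx}\mapsto zx$. The short exact sequence
\[
0\to \frac{\la zx\ra+I(G)}{I(G)}\to Q/I(G)\to Q/I(G^\mathcal{C})\to 0,
\]
together with the long exact sequence of the cone, then shows that $\Cone{\bar\beta}$ resolves $Q/I(G^\mathcal{C})$.

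\textbf{Minimality.} When $D(G,\mathcal{C})$ has no oriented cycles, $M$ is minimal by Lemma \ref{middleconeres}, and $\GG$ is minimal. The map $\bar\beta$ has entries in $\mathfrak m$: on $M_0=\FF_1$ the entries are the monomials $zx$, and on $M_{\geq1}$ there is a factor of $z$. Hence $\bar\beta\otimes\Bbbk=0$, and the cone is minimal.

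\textbf{DG structure.} Here we apply Theorem \ref{thm: dgmodule M} with $A=\GG$ and the dg $\GG$-module $M$ of Lemma \ref{newmodstructure}, whose action is twisted by $\tfrac1z$. The map $\bar\beta$ must be $\GG$-linear for this action. In degree $\geq1$, $\bar\beta(am)=-zac=a\bar\beta(m)$. For $m\in M_0$,
\[
\bar\beta(am)=z\cdot\tfrac1z a\partial^\FF(m)=a\bar\beta(m).
\]
For the homotopy, take $\mu$ from Theorem \ref{thm-Betadef}(c) rescaled to match the new action and $\bar\beta$. Concretely, the products $bd$, $md$, $bn$, $mn$ in $\FF$ are replaced by the corresponding products multiplied by $\tfrac1z$ where needed, or equivalently $\mu$ is transported through $\mu^z$. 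The $z$-factors should cancel so that $\mu$ is a genuine nullhomotopy for $\bar\theta(m\otimes n)=m\bar\beta(n)-\bar\beta(m)n$.

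We then re-run the four-case verification of Theorem \ref{thm-Betadef}(c), checking $A$-linearity, conditions (\ref{twisting data}), (\ref{gradedcommutative condition}) and (\ref{strictly commutative condition}), the homotopy identity, and $\bar\beta\mu=0$. The last holds because $\mu$ lands in the $\FF$-component with zero $\GG$-component. Strict commutativity comes from that of the Taylor complex $\FF$.

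The main obstacle is this last step: choosing the correct powers of $z$ in $\mu$. The homotopy identity and the associativity condition (\ref{twisting data}) must hold simultaneously with the $\tfrac1z$-twisted action, and we must check that every division by $z$ lands in the free module, since the images of $\FF$ products in degree $\geq1$ carry $z$ in their coefficients. We would first try to define $\mu$ via the products in $\FF$ divided by $z$ and verify the $z$-divisibility directly on basis elements $f_W$.
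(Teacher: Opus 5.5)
Your treatment of exactness and minimality follows the paper's argument. Your check that the factor $z$ in $\bar\beta$ is exactly what makes it a chain map at $M_1\to M_0$ is in fact more careful than the paper's.

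The gap is in the dg step, at the point you flag as the main obstacle, and your tentative fix goes the wrong way. No rescaling of $\mu$ is needed. Dividing the $\FF$-products by $z$ (equivalently, transporting the Koszul product through $\mu^z$) breaks the homotopy identity. The missing observation is how the twist cancels in $\bar\theta(m\otimes n)=m\bar\beta(n)-\bar\beta(m)n$. The $\GG_{\geq 1}$-action enters $\bar\theta$ only through elements of $\bar\beta(M_{\geq 1}^\natural)\subseteq z\GG_{\geq 1}$, so the factor $z$ cancels the $\tfrac1z$ of Lemma \ref{newmodstructure}. On $M_0$, $\bar\beta=\partial^\FF$ lands in $Q$, whose action is untwisted.

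For instance, take $m=(b,c)$, $n=(d,e)\in M_{\geq 1}^\natural$. Then $m\bar\beta(n)=-z\,me=(-b\alpha(e),-zce)$ and $\bar\beta(m)n=-z\,cn=(-(-1)^{|c|}\alpha(c)d,-zce)$. Hence $\bar\theta(m\otimes n)=((-1)^{|c|}\alpha(c)d-b\alpha(e),0)$, and the other three cases work the same way. So $\bar\theta$ is given by the same expressions in $b,c,d,e,\alpha$ as the $\theta$ of Theorem \ref{thm-Betadef}(c).

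Consequently the \emph{unmodified} $\mu$ of Theorem \ref{thm-Betadef}(c), built from the ordinary Taylor products in $\FF$, is already a nullhomotopy. The verifications (iv) there use only the formula for $\partial^M$ and the Leibniz rule in $\FF$, never multiplicativity of $\alpha$. The $\tfrac1z$ enters only in $\GG$-linearity, where it appears on both sides: $\mu(m\otimes na)=(-(-1)^{|b|}\tfrac1z\, bd\alpha(a),0)=\mu(m\otimes n)a$. This is exactly what the paper does.

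By contrast, consider $\mu'=\tfrac1z\mu$. It is well defined, since products of positive-degree elements of $\FF$ are divisible by $z$. It satisfies $\GG$-linearity, $\bar\beta\mu'=0$, and (\ref{twisting data})--(\ref{strictly commutative condition}). But $\partial^M\mu'+\mu'\partial^{M\otimes M}=\tfrac1z\bar\theta\neq\bar\theta$.

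Concretely, take $m=f_{zx}$, $n=f_{zy}\in M_0$ with $x<y$. Then $\bar\theta(m\otimes n)=zy\,f_{zx}-zx\,f_{zy}$ and $f_{zx}f_{zy}=z\,f_{\{zx,zy\}}$. So $\partial^M(f_{zx}f_{zy},0)=\bar\theta(m\otimes n)$, whereas $\partial^M(f_{\{zx,zy\}},0)=y\,f_{zx}-x\,f_{zy}$. The homotopy identity is precisely what yields the Leibniz rule in the proof of Theorem \ref{thm: dgmodule M}, so your proposed $\mu$ would not give a dg algebra.

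A minor point: the isomorphism of Lemma \ref{isos} does not divide by $z$. Because $zI(G)\subseteq\la zx\mid x\in\cC\ra$, it is the identity on representatives, and $\bar\beta=\partial^\FF$ on $M_0$ lifts the inclusion directly.
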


\begin{proof}
    By Lemma \ref{newmodstructure}, if $\tau$ is a dg morphism, then $\Cone{\alpha_{\geq 1}}[1]$ is a dg $\GG$-module.  We claim that remaining the hypotheses of Theorem \ref{thm: dgmodule M} are satisfied when the action of $A:=\mathbb{G}$ on $M$ is as described in Lemma \ref{newmodstructure}, where $\beta$ is replaced with $\bar\beta$.  Define $\mu:M \otimes M \to M$ as in Theorem \ref{thm-Betadef}, part (c).  Several things need to be checked, but only one is worth writing, as the others' proofs are, mutatis mutandis, in the proof of Theorem \ref{thm-Betadef}:
    \begin{itemize}
        \item[(I)] $\bar\beta$ is a chain map which respects the action of $A$ of $M$
        \item[(II)] $\mu$ is $A$-linear
        \item[(III)] $\bar\beta\mu=0$
        \item[(IV)] $\theta:M \otimes M \to M$, given by $\theta(m \otimes n)=m\bar\beta(n)-\bar\beta(m)n$, is nullhomotopic via $\mu$
        \item[(V)] $\mu$ satisfies equation \ref{twisting data}
        \item[(VI)] $\mu$ satisfies equation \ref{gradedcommutative condition}
        \item[(VII)] $\mu$ satisfies equation \ref{strictly commutative condition}
    \end{itemize}

\noindent (I) The map $\bar\beta: M \to A$ is $Q$-linear because $\beta$ is. We check simultaneously that $\bar\beta$ is a chain map and $A$-linear. If $m = (b,c) \in M_{\geq 1}^{\natural}$ and $a\in A_{\geq 1}^{\natural}$, then
\begin{align*}
    \bar\beta\partial^M(m) = \bar\beta(-\partial^B(b)-\alpha(c), \partial^A(c))= -z\partial^A(c) = -\partial^A(zc)=\partial^A\bar\beta(m)
\end{align*}
\begin{align*}
    \bar\beta(am) = \bar\beta((-1)^{|a|}\tfrac{1}{z}\alpha(a)b,ac) = -zac = a\bar\beta(m). 
\end{align*}
On the other hand, if $m \in M_0$ and $a \in A_{\geq 1}^{\natural}$, then
\[
\bar\beta\partial^M(m)=0=\partial^A\bar\beta(m)
\]
since $\bar\beta(m)\in Q=A_0$, and 
\[
\bar\beta(am)=\bar\beta((-1)^{|a|}\tfrac{1}{z}\alpha(a)m, -\tfrac{1}{z}a\partial^B(m)) = a\partial^B(m)=a\bar\beta(m).
\]
Hence, $\bar\beta$ is a homomorphism of dg $A$-modules.\\

\noindent (II)-(VII) The proofs of go as their counterparts do in Theorem \ref{thm-Betadef}.  In (II), one simply needs to include a $\tfrac{1}{z}$ when the action of $A$ on $M$ is performed, and in (IV), one notes an occasional instance of $z \cdot \tfrac{1}{z}=1$.

    Since $A$ and $M$ are both minimal and $\bar\beta \otimes \Bbbk = 0$, we have that $C$ is minimal.  The long exact sequence in homology associated to $\Cone{\bar\beta}$ reduces to
    \[
    0 \xrightarrow[]{} H_1(\Cone{\bar\beta}) \xrightarrow[]{} H_0(\Cone{\alpha_{\geq 1}}[1]) \xrightarrow[]{} H_0(\GG) \xrightarrow[]{} H_0(\Cone{\bar\beta}) \xrightarrow[]{} 0,
    \]
    which, by Lemma \ref{middleconeres} and the fact that $\GG$ resolves $Q/I(G)$, becomes
    \[
    0 \xrightarrow[]{} H_1(\Cone{\bar\beta}) \xrightarrow[]{} \dfrac{\la zx \mid x \in \mathcal{C}\ra+zI(G)}{zI(G)} \xrightarrow[]{} Q/I(G) \xrightarrow[]{} H_0(\Cone{\bar\beta}) \xrightarrow[]{} 0.
    \]
    
    We have
    \begin{align*}
    \dfrac{Q/I(G)}{\left(\la zx \mid x \in \mathcal{C}\ra+zI(G)\right)/\,zI(G)}
    &=\dfrac{Q/I(G)}{\left(\la zx \mid x \in \mathcal{C}\ra+I(G)\right)/I(G)}\\ &\cong \dfrac{Q}{\la zx \mid x \in \mathcal{C} \ra+I(G)}\\
    &=Q/I(G^\mathcal{C}).
    \end{align*}

    Furthermore,
    \begin{align*}
        H_0(\Cone{\bar\beta})&=\coker \partial_1^{\Cone{\bar\beta}}\\
        &=\dfrac{Q}{\im \partial_1^\mathbb{G} + \im \partial_1^\mathbb{F}}\\
        &=Q/I(G^\mathcal{C}).
    \end{align*}
    It follows that $H_1(\Cone{\bar\beta})=0$.  
    \end{proof}

    The case of the following corollary in which $I(G)$ is minimally resolved by a dg algebra follows quickly from the construction in Theorem \ref{suspensionres}.  See \cite{karavien} for similar results, obtained primarily via analysis of independence complexes, rather than of resolutions, and without reference to dg algebras.  

\begin{cor}\label{splitting}
    With $G$, $\mathcal{C}$, and $G^\mathcal{C}$ as before, we have
    \begin{itemize}
        \item[\textup{a)}] $\beta_{i,j}(I(G^\mathcal{C}))=\beta_{i,j}(I(G))+\beta_{i,j}(\la zx \mid x \in \mathcal{C}\ra)+\beta_{i-1,j-1}(I(G))$ 
        \item[\textup{b)}] $\reg I(G^\mathcal{C})=\reg I(G)$
        \item[\textup{c)}] $\pdim I(G^\mathcal{C})=\max\{|\mathcal{C}|-1,\pdim I(G)+1\}$
    \end{itemize}
    
    Furthermore, if $|\mathcal{C}| \leq 2\sqrt{n}-1$, where $n$ is the number of vertices of $G$, then $\pdim I(G^\mathcal{C})=\pdim I(G)+1$.  
\end{cor}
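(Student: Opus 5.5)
The plan is to read all three statements off the explicit resolution built in Theorem \ref{suspensionres}. For (a) we only need the underlying graded free module of that resolution, not any dg structure. The difficulty is that Theorem \ref{suspensionres} is stated under the hypothesis that $D(G,\mathcal{C})$ has no oriented cycles.

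Step 1 is to see that this hypothesis can be arranged. We choose the cover map $\fX$ so that $D(G,\mathcal{C})$ is acyclic. Fix a total order on $\mathcal{C}$. For an edge with both endpoints in $\mathcal{C}$, let $\fX$ pick the larger endpoint; for any other edge, $\fX$ is forced. Then every directed edge $(a,b)$ in $D(G,\mathcal{C})$ either has $a\notin\mathcal{C}$, or has $a<b$ in $\mathcal{C}$. A vertex outside $\mathcal{C}$ is never the head of an edge, so it cannot lie on a cycle. Any cycle therefore stays inside $\mathcal{C}$, where the order strictly increases along each edge, which is impossible. With this choice $\Cone{\bar\beta}$ is the minimal free resolution of $Q/I(G^\mathcal{C})$.

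Step 2 computes the graded ranks. The mapping cone gives
\[
C_i \cong \GG_i \oplus M_{i-1},
\]
and since $M=\Cone{\alpha_{\geq 1}}[1]$,
\[
M_{i-1}\cong \FF_i\oplus \GG_{i-1}.
\]
Here $\GG_{i-1}$ sits in $M$ with the same internal degrees, and no twist by $z$ enters the module structure on the generators. On the other hand, $\bar\beta$ multiplies that summand by $z$, so inside $C$ those generators carry an extra degree $1$. I expect this degree bookkeeping to be the main obstacle. I would check it on generators: an element $(0,c)$ with $c\in\GG_{i-1}$ of internal degree $j-1$ maps under $\bar\beta$ to $-zc$, so it must have internal degree $j$. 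Passing from $Q/I$ to the ideals (so that $C_{i+1}$ corresponds to $\beta_i$ of the ideal), this gives
\[
\beta_{i,j}(I(G^\mathcal{C}))=\beta_{i,j}(I(G))+\beta_{i,j}(\la zx\mid x\in\mathcal{C}\ra)+\beta_{i-1,j-1}(I(G)).
\]
I would also confirm the shift by minimality: every generator of $I(G^{\mathcal{C}})$ has degree $2$, and the $\GG_0$-summand of $M$ yields generators $zx$ of degree $2$.

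Step 3 derives (b) and (c) from (a). The three families have $j-i$ values bounded by $\reg I(G)$, by $\reg\la zx\mid x\in\mathcal{C}\ra$, and again by $\reg I(G)$, since $(j-1)-(i-1)=j-i$. The ideal $\la zx\mid x\in\mathcal{C}\ra=z\la x\mid x\in\mathcal{C}\ra$ is resolved by a shifted Koszul complex, so its regularity is $2\le \reg I(G)$; this needs $G$ to have an edge, and the edgeless case is trivial. That gives (b). The same ideal has projective dimension $|\mathcal{C}|-1$, and the third summand shifts $\pdim I(G)$ up by one, which gives (c).

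For the final claim, it suffices to show $|\mathcal{C}|-1\le \pdim I(G)+1$. I would get this from a lower bound on $\pdim I(G)$ in terms of $n$, for instance a known bound of the form $\pdim R/I(G)\ge$ (size of a minimal vertex cover) or a big-height bound. Combined with the inequality $|\mathcal{C}|\le 2\sqrt n-1$, and using $(\sqrt n-1)^2\ge0$, this should give $|\mathcal{C}|-2\le\pdim I(G)$. Pinning down exactly which lower bound yields the $2\sqrt n$ threshold is the second obstacle. I would first try the crude bound $\pdim I(G)\ge \mathrm{bight}\,I(G)-1$ together with an estimate of the big height in terms of $n$.
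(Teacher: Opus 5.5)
Your route to (a) differs from the paper's, and as written it has a gap. Theorem \ref{suspensionres} depends on the standing assumption of Section \ref{suspensionsection} that $\GG$ is a dg algebra. That assumption is what lets one write $\GG\cong\mathbb{T}/\mathbb{J}$ with basis $g_W$ and define $\tau(g_W)=\frac{y_W}{d_W}e_{W'}$ in Lemma \ref{tau}. The corollary is meant for every graph; the paper uses the cone only in the dg case. So your Step 1 removes the wrong hypothesis.

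The fix is to take \emph{any} graded comparison map $\tau:\GG\to\KK$ lifting $Q/I(G)\onto Q/\la x\mid x\in\cC\ra$. Then $\alpha=\mu^z\tau$ still lifts multiplication by $z$ from $I(G)$ to $\la zx\mid x\in\cC\ra$, so Lemma \ref{middleconeres} goes through. The map $\bar\beta$ is still a chain map: for $(b,c)\in M_1$, both $\bar\beta\partial^M(b,c)$ and $\partial^{\GG}\bar\beta(b,c)$ equal $-z\partial^{\GG}(c)$, because $\alpha_0=z$. The exactness argument is unchanged. Minimality is automatic, since $\GG_i$ is generated in degrees $\ge i+1$ and $\KK_i$ in degree $i$, so $\tau_{\ge1}\otimes\Bbbk=0$ for every graded lift. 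This makes your Step 1 superfluous.

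The paper does not use the cone here at all. It notes that $I(G^\cC)=I(G)+\la zx\mid x\in\cC\ra$ is a Betti splitting, because the generators divisible by $z$ generate an ideal with a linear resolution. Combined with $\la zx\mid x\in\cC\ra\cap I(G)=zI(G)$, this yields (a)--(c) for all $G$ at once. Your repaired route costs more work but produces an explicit minimal resolution.

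There are two slips in your Step 2. First, since $\mu^z(e_W)=f_{zW}$, the map $\alpha$ raises internal degree by one. So the $\GG$-summand of $M$ is already $\GG(-1)$; it does not sit in $M$ with its original degrees, or $\partial^M$ would not be homogeneous. Second, $M_0=\FF_1$ has no $\GG_0$-summand, because the cone is taken of $\alpha_{\ge1}$. Thus $M_{i-1}\cong\FF_i\oplus\GG_{i-1}$ holds only for $i\ge2$, and the generators $zx$ come from $\FF_1$. With these corrections, and the convention $\beta_{-1,*}=0$, your formula (a) and your derivations of (b) and (c) from it are correct.

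The final claim is not proved: everything rests on $\pdim I(G)\ge 2\sqrt n-3$, which you leave open. The paper cites H\`a--Hibi for $\pdim Q/I(G)\ge 2\sqrt n-2$, which gives $|\cC|-1\le 2\sqrt n-2\le\pdim I(G)+1$. Your big-height route can reach this, but only with a genuine combinatorial theorem. You need that every graph on $n$ vertices with no isolated vertices has a minimal vertex cover of size at least $2\sqrt n-2$ (equivalently, a maximal independent set of size at most $n+2-2\sqrt n$). This bound is attained by $K_t$ with $t-1$ pendant leaves at each vertex ($n=t^2$). So it must be used in sharp form, and it does not follow from $(\sqrt n-1)^2\ge0$ alone.

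Note also that ``no isolated vertices'' is essential and is missing from the statement. Take three disjoint edges plus ten isolated vertices, with $\cC$ the six non-isolated vertices. Then $n=16$ and $|\cC|=6\le7$, yet (c) gives $\pdim I(G^\cC)=5\ne3=\pdim I(G)+1$.
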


It is worth noting that $\beta_{i,j}(\la zx \mid x \in \mathcal{C}\ra)=\displaystyle{|\mathcal{C}| \choose i+1}$ when $j=i+2$ and is zero otherwise, and this comes from the fact the the Taylor resolution on $\la zx \mid x \in \mathcal{C}\ra$ is minimal. 

\begin{proof}[Proof of Corollary \ref{splitting}]
    When $Q/I(G)$ is minimally resolved by a dg algebra, one can examine $\Cone{\bar\beta}$ from Theorem \ref{suspensionres}.  In any case, one can use the fact that
    \[
    I(G^\mathcal{C})=I(G)+\la zx \mid x \in \mathcal{C} \ra
    \]
    is a Betti splitting (see \cite{FHV}) to get
    \begin{itemize}
        \item[\textup{i)}] $\beta_{i,j}(I(G^\mathcal{C}))=\beta_{i,j}(I(G))+\beta_{i,j}(\la zx \mid x \in \mathcal{C}\ra)+\beta_{i-1,j}(\la zx \mid x \in \mathcal{C}\ra \cap I(G) \ra)$
        \item[\textup{ii)}] $\reg I(G^\mathcal{C})=\max\{\reg I(G), \reg \la zx \mid x \in \mathcal{C}\ra, \reg \la zx \mid x \in \mathcal{C}\ra \cap I(G) \ra)-1\}$
        \item[\textup{iii)}] $\pdim I(G^\mathcal{C})=\max\{\pdim I(G), \pdim \la zx \mid x \in \mathcal{C}\ra, \pdim \la zx \mid x \in \mathcal{C}\ra \cap I(G) \ra)+1\}$
    \end{itemize}
    
    Note that $\la zx \mid x \in \mathcal{C}\ra \cap I(G) \ra=zI(G)$, and so $\beta_{i-1,j}(\la zx \mid x \in \mathcal{C}\ra \cap I(G) \ra)=\beta_{i-1,j-1}(I(G))$, yielding part a).
    
    We have that $\reg \la zx \mid x \in \mathcal{C}\ra=2$, the minimum possible value for the regularity of an edge ideal.  Furthermore, $\la zx \mid x \in \mathcal{C}\ra \cap I(G) \ra=zI(G)$ has regularity $1+\reg I(G)$.  Part b) follows.    

    Finally, part c) follows from the fact that $\la zx \mid x \in \mathcal{C}\ra \cap I(G) \ra=zI(G)$ has the same projective dimension as $I(G)$ and that $Q/\la zx \mid x \in \mathcal{C} \ra$ is minimally resolved by the Taylor resolution.  

    H\'a and Hibi show that $\pdim Q/I(G) \geq 2 \sqrt n -2$, and so $\pdim I(G) \geq 2\sqrt n-3$ \cite{HH}.  If $|\mathcal{C}| \leq 2 \sqrt n -1$, then $|\mathcal{C}|-1 \leq \pdim I(G)+1$, and the ``furthermore" statement follows from part c).    
\end{proof}

It is worth noting that even if $|\mathcal{C}| > 2\sqrt n-1$, we may still have that $\pdim I(G^\mathcal{C})=\pdim I(G)+1$.  For example, take $G=C_4$ and $\mathcal{C}=V(G)$.  Then, $|\mathcal{C}|=4>2\sqrt{4}-1=3$, but $\pdim I(G^\mathcal{C})=3=2+1=\pdim I(G)+1$.  

\begin{rmk}
    In the proof of Lemma \ref{middleconeres}, we see the importance of $\tau$ producing coefficients.  We note that the above construction (Theorem \ref{suspensionres}) of the minimal free resolution of $I(G^\mathcal{C})$ still works if $\tau$ is not a dg morphism as long as the lift of $Q/I(G) \onto Q/\la x \mid x \in \mathcal{C} \ra$ to $\mathbb{G} \to \mathbb{K}$ produces coefficients in nonzero degrees (though the result that the resolution is a dg algebra does not follow).  
\end{rmk}

The next example shows that even if $\mathbb{G}$ admits the structure of a dg algebra, there is no guarantee that $\tau$ will be a dg morphism.  

\begin{ex}
\label{ex:C_5nogood}
    In this example, we show that there is no dg morphism $\tau^{C_5}$ from the minimal free resolution $\mathbb{G}^{C_5}$ of $Q/I(C_5)$ to the Koszul complex on any vertex cover of $C_5 = \{\{x,y\}, \{y,z\}, \{z,u\}, \{u,v\}, \{x,v\}\}$. We must consider the following cases determined by number of vertices in the vertex cover $\mathcal{C}$: the case where the vertex cover is minimal (containing three vertices), the case where the vertex cover contains four vertices, and the final case where the vertex cover is the entire vertex set.

    Let $\mathbb{T}^{C_5}$ be the Taylor resolution of $Q/I(C_5)$, and let $\KK^\mathcal{C}$ be the Koszul complex on $\mathcal{C}$. Since $Q/I(C_5)$ is minimally resolved by a dg algebra (see, for example, \cite[Theorem 4.1]{BE}), we know by Theorem \ref{structurethm} that $\mathbb{G}^{C_5} \cong \mathbb{T}^{C_5}/\mathbb{J}$ for some dg ideal $\mathbb{J}$ of $\mathbb{T}^{C_5}$. By Lemma \ref{Lem:tau-dgcondition},  to prove $\tau^{C_5}$ is not a dg morphism, it is enough to check that for any dg morphism $\varphi: \mathbb{T}^{C_5} \to \KK^\mathcal{C}$ (see Lemma \ref{phi}), the dg ideal $\mathbb{J}$ is not contained in the kernel of $\varphi$.
    
    We begin with the minimal vertex covers. By symmetry, we need only consider one: take $\mathcal{C} = \{x,z,v\}$.  There are only two options for the map $\varphi$: either $\varphi=\varphi_{1}$ sending $xv$ to $xe_v$ or $\varphi=\varphi_2$ sending $xv$ to $ve_x$.  \looseness -1

    Using discrete Morse theory, a dg ideal $\mathbb{J}$ such that $\mathbb{G}^{C_5} \cong \mathbb{T}^{C_5}/\mathbb{J}$ may be computed explicitly (see \cite[Example 2.31]{GMP}). In doing so, we notice that any such dg ideal $\mathbb{J}$ must contain both $g_{\{xy, yz, xv\}}$ and $ g_{\{zu,uv,xv\}}$ (there are other basis elements where this is also true, but these are of particular interest). Now, $\varphi_1\left(g_{\{xy, yz, xv\}}\right) = ye_{\{x,z,v\}}$ which is nonzero in $\KK^\mathcal{C}$. Similarly, $\varphi_2\left(g_{\{zu,uv,xv\}}\right) = ue_{\{x,z,u\}}$ is nonzero in $\KK^\mathcal{C}$. Thus, $\mathbb{J}$ is not contained in the kernel of either map, meaning there is no dg morphism $\tau^{C_5}$ from $\mathbb{G}^{C_5}$ to the Koszul complex on any minimal vertex cover of $C_5$.

Next we consider vertex covers containing four vertices. Similar to the previous case, symmetry allows us to consider one vertex cover $\mathcal{C} = \{x,y,z,u\}$. Again by symmetry, there are only two maps (up to isomorphism) $\varphi = \varphi_1$ and $\varphi = \varphi_2$: 
\vspace{-3mm}
\begin{equation}
  \begin{minipage}[t]{0.45\textwidth}
    \begin{align*}
    \varphi_1: \mathbb{T}^{C_5} &\to \KK^\mathcal{C} \\
    g_{xy} &\mapsto ye_x\\
     g_{yz} &\mapsto ze_y\\
      g_{zu} &\mapsto ue_z\\
     g_{uv} &\mapsto ve_u\\
      g_{xv} &\mapsto ve_x
\end{align*}
  \end{minipage}%
  \hfill
  \begin{minipage}[t]{0.45\textwidth}
    \begin{align*}
   \varphi_2: \mathbb{T}^{C_5} &\to \KK^\mathcal{C}\\
    g_{xy} &\mapsto xe_y\\
     g_{yz} &\mapsto ze_y\\
      g_{zu} &\mapsto ue_z\\
     g_{uv} &\mapsto ve_u\\
      g_{xv} &\mapsto ve_x
\end{align*}
  \end{minipage}
\end{equation}
\vspace{1mm}

Recall that any dg ideal $\mathbb{J}$ must contain both $g_{\{xy, yz, xv\}}$ and $ g_{\{zu,uv,xv\}}$.  It is not hard to see that $\varphi_1(g_{\{zu,uv,xv\}}) =ve_{\{x,z,u\}}$, which is not zero in $\KK^{\mathcal{C}}$. Similarly, $\varphi_2(g_{\{zu,uv,xv\}}) = ve_{\{x,z,u\}}$. Thus, our dg ideal $\mathbb{J}$ is not contained in the kernel of either $\varphi$, so there is no dg morphsim $\tau^{C_5}$ from $\mathbb{G}^{C_5}$ to the Koszul complex on any vertex cover of $C_5$ containing four vertices.

    Lastly, we must consider when our vertex cover is the entire vertex set, i.e. $\mathcal{C} = \{x,y,z,u,v\}$. Up to isomorphism, there is only one map $\varphi$ to consider:
    \begin{align*}
    \varphi: \mathbb{T}^{C_5} &\to \KK^\mathcal{C}\\
    g_{xy} &\mapsto ye_x\\
     g_{yz} &\mapsto ze_y\\
      g_{zu} &\mapsto ue_z\\
     g_{uv} &\mapsto ve_u\\
      g_{xv} &\mapsto xe_v.
\end{align*}
Once again, $\varphi(g_{\{zu,uv,xv\}}) = uxe_{\{z,u,v\}}$ is not zero in $\KK^{\mathcal{C}}$.  Alternatively, note that $D(C_5,\mathcal{C})$ is an oriented cycle (see Lemma \ref{mintau}). Thus, there is no dg morphism $\tau^{C_5}$ from $\mathbb{G}^{C_5}$ to the Koszul complex on any vertex cover of $C_5$.
\end{ex}

With the revelation that not all graphs, even dg graphs (those with edge ideals minimally resolved by dg algebras), may be used to build other dg graphs via this method of suspension, there comes the question of what graphs can be used. In other words, for which graphs and vertex covers is $\tau$ a dg morphism? The next proposition shows that the property of $\tau$ being a dg morphism is hereditary, meaning, in particular, that $\tau$ cannot be a dg morphism for any graph containing $C_5$ as an induced subgraph.

\begin{prop}
\label{prop:Hereditarydgtau}
    Let $H$ be an induced dg subgraph of a dg graph $G$. If there exists a dg morphism $\tau^G$ from the minimal free resolution of $Q/I(G)$ to the Koszul complex on any vertex cover $\mathcal{D}$ of $G$, then there exists a dg morphism $\tau^H$ from the minimal free resolution of $Q/I(H)$ to the Koszul complex on any vertex cover $\mathcal{C} \subseteq \mathcal{D}$ of $H$. 
\end{prop}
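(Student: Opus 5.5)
We will derive $\tau^H$ by restricting everything from $G$ to $H$ along the inclusion of the induced subgraph. Four ingredients are needed, and we assemble them in order.

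\emph{First}, the resolutions. Since $H$ is induced, the squarefree monomial ideal $I(H)$ is the restriction of $I(G)$ to the variables $V(H)$. Any multigraded free resolution of $Q/I(G)$ restricts to the subcomplex spanned by the basis elements whose multidegree is supported on $V(H)$. This subcomplex is a multigraded resolution of $Q/I(H)$, and it is minimal if the original is. Concretely, with $\mathbb{G}^G\cong \mathbb{T}^G/\mathbb{J}^G$ as in Theorem \ref{structurethm}, the Taylor resolution $\mathbb{T}^H$ is the dg subalgebra of $\mathbb{T}^G$ spanned by the $g_W$ with $W\subseteq E(H)$. Setting $\mathbb{J}^H:=\mathbb{J}^G\cap \mathbb{T}^H$, we get that $\mathbb{T}^H/\mathbb{J}^H$ is the restriction of $\mathbb{G}^G$, so it minimally resolves $Q/I(H)$. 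It is a dg algebra because the restriction of a multigraded dg algebra to multidegrees supported on $V(H)$ is closed under products and the differential. Since $\mathbb{J}^G$ is a dg ideal of $\mathbb{T}^G$, its intersection with the dg subalgebra $\mathbb{T}^H$ is a dg ideal of $\mathbb{T}^H$. This identification is the step where we must carefully use that $\mathbb{J}^G$ is a multigraded dg ideal; we use Katth\"an's structure theorem for both graphs so that the minimal resolution of $Q/I(H)$ is presented as $\mathbb{T}^H/\mathbb{J}^H$.

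\emph{Second}, the cover maps. By Lemma \ref{Lem:tau-dgcondition}, the hypothesis gives a cover map $\fX^G$ of $(G,\mathcal{D})$ whose map $\varphi^G:\mathbb{T}^G\to\mathbb{K}^{\mathcal{D}}$ satisfies $\mathbb{J}^G\subseteq\ker\varphi^G$. Given the vertex cover $\mathcal{C}\subseteq\mathcal{D}$ of $H$, we define $\fX^H(w):=\fX^G(w)$ for $w\in E(H)$. The key check is that this lands in $\mathcal{C}$. Here is the obstacle: we only know $\fX^G(w)\in\mathcal{D}\cap w$. If both endpoints of $w$ lie in $\mathcal{D}$ but $\fX^G(w)\notin\mathcal{C}$, the restriction fails. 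We would first argue that $\mathcal{C}$ may be replaced by $\mathcal{D}\cap V(H)$, which is a vertex cover of $H$ containing $\mathcal{C}$. The Koszul complex $\mathbb{K}^{\mathcal{C}}$ is obtained from $\mathbb{K}^{\mathcal{D}\cap V(H)}$ by a dg algebra map: it sends $e_x\mapsto e_x$ for $x\in\mathcal{C}$, and for $x\notin\mathcal{C}$ it sends $e_x$ to the appropriate element obtained from the unique edge choice. This requires care. If this is not feasible, we would instead read the statement as allowing $\mathcal{C}$ to be chosen compatibly, namely $\mathcal{C}\supseteq \fX^G(E(H))$, and handle the general $\mathcal{C}$ by choosing $\fX^H(w)$ as the endpoint of $w$ lying in $\mathcal{C}$ when $\fX^G(w)\notin\mathcal{C}$.

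\emph{Third}, the restriction of $\varphi$. With $\fX^H=\fX^G|_{E(H)}$, the quantities $x_W, y_W, d_W$ for $W\subseteq E(H)$ are the same computed in $G$ or in $H$. Hence $\varphi^H$ equals the restriction of $\varphi^G$ to $\mathbb{T}^H$, followed by the inclusion $\mathbb{K}^{\fX(E(H))}\subseteq \mathbb{K}^{\mathcal{D}}$, which is a dg subalgebra. It follows that $\mathbb{J}^H=\mathbb{J}^G\cap\mathbb{T}^H\subseteq\ker\varphi^G\cap\mathbb{T}^H=\ker\varphi^H$. The image lies in $\mathbb{K}^{\mathcal{C}}$ once $\fX^H(E(H))\subseteq\mathcal{C}$.

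\emph{Finally}, Lemma \ref{Lem:tau-dgcondition} applied to $H$ gives that $\tau^H$ is a dg morphism.

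We expect the second step, reconciling $\fX^G$ with an arbitrary $\mathcal{C}\subseteq\mathcal{D}$, to be the main difficulty. The remaining steps are formal consequences of multigraded restriction.
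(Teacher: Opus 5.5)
Your argument has a genuine gap exactly where you suspect it, and neither proposed repair closes it. A vertex cover of $H$ lies in $V(H)$, and $\fX^G(E(H))\subseteq\mathcal{D}\cap V(H)$. So your restriction argument proves the statement exactly when $\fX^G(E(H))\subseteq\mathcal{C}$, for instance when $\mathcal{C}=\mathcal{D}\cap V(H)$. For $\mathcal{C}\subsetneq\mathcal{D}\cap V(H)$ nothing has been shown.

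\textbf{First repair.} It cannot exist. A chain map $\gamma\colon\mathbb{K}^{\mathcal{D}\cap V(H)}\to\mathbb{K}^{\mathcal{C}}$ that is the identity on $Q$ in degree $0$ would need $\partial\gamma(e_x)=x$. But $\partial(\mathbb{K}^{\mathcal{C}}_1)=\la y\mid y\in\mathcal{C}\ra$ contains no variable $x\notin\mathcal{C}$.

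\textbf{Second repair.} It abandons the only mechanism you have for $\mathbb{J}^H\subseteq\ker\varphi^H$, namely $\varphi^H=\varphi^G|_{\mathbb{T}^H}$, and the local rule you suggest really fails. Take $H=G$ the path with edges $av,vc,cd$, with $\mathcal{D}=\{a,v,c,d\}$, $\fX^G(av)=\fX^G(vc)=v$ and $\fX^G(cd)=d$. For $W=\{av,vc,cd\}$, the fact that $\mathbb{G}_3=0$ forces $\mathbb{J}=Qg_W\oplus Q\,\partial g_W$. Since $\varphi^G(g_W)=0$, $\tau^G$ is a dg morphism. Now take the vertex cover $\mathcal{C}=\{a,c,d\}$. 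Your rule gives $\fX^H=(a,c,d)$ on $(av,vc,cd)$, which is injective on $W$. Hence $\varphi^H(g_W)=v\,e_{\{a,c,d\}}\neq 0$, even though $g_W\in\mathbb{J}^H$. Here the non-local choice $\fX^H(cd)=c$ does work. So the general case needs a genuinely new ingredient: a global re-choice of the cover map, together with a proof that it kills $\mathbb{J}^H$. Reinterpreting the statement does not supply this.

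\textbf{Comparison with the paper.} For $\mathcal{C}=\mathcal{D}\cap V(H)$ (or any $\mathcal{C}\supseteq\fX^G(E(H))$), your argument is sound and takes a different route from the paper's. The paper prunes. It uses surjective dg maps $\alpha\colon\mathbb{T}^G\to\mathbb{T}^H$, $\beta\colon\mathbb{G}^G\to\mathbb{G}^H$ and $\gamma\colon\mathbb{K}^{\mathcal{D}}\to\mathbb{K}^{\mathcal{C}}$, and defines $\varphi^H$ by $\varphi^H\alpha=\gamma\varphi^G$. It then gets the chain-map property and $\tau^H\beta=\gamma\tau^G$ by right-cancelling the surjections $\alpha$ and $\pi^G$.

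You work dually, with the dg subalgebra $\mathbb{T}^H\subseteq\mathbb{T}^G$ and $\mathbb{J}^H=\mathbb{J}^G\cap\mathbb{T}^H$. Multigraded surjectivity of $\pi$ identifies $\mathbb{T}^H/\mathbb{J}^H$ with $\pi(\mathbb{T}^H)$, the restriction of $\mathbb{G}^G$ to multidegrees supported on $V(H)$. The containment $\mathbb{J}^H\subseteq\ker\varphi^H$ is then immediate. This route gains three things:
\begin{itemize}
\item it avoids the well-definedness check that the paper's lift silently needs, namely $\gamma\varphi^G(\ker\alpha)=0$;
\item it avoids changing rings;
\item it produces the dg structure on $\mathbb{G}^H$ directly. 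You must use this particular presentation, not an arbitrary one from Katth\"an's theorem.
\end{itemize}

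Pruning $\mathbb{K}^{\mathcal{D}}$ along $V(H)$ lands in $\mathbb{K}^{\mathcal{D}\cap V(H)}$, and the degree-one obstruction above applies equally to the paper's $\gamma$. So the paper's argument also only establishes the case $\mathcal{C}=\mathcal{D}\cap V(H)$. That case is all the subsequent remark on $C_5$ needs, so the honest fix is to state and prove the proposition in that form.
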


\begin{proof}
By assumption, we know there exist dg morphisms $\varphi^G: \mathbb{T}^G \to \KK^\mathcal{D}$ and $\tau^G:\mathbb{G}^G \to \KK^\mathcal{D} $such that $\tau^G\circ\pi^G = \varphi^G$. By the pruning process detailed in \cite{boocher}, we may prune the resolutions $\mathbb{T}^G, \mathbb{G}^G,\KK^G$ to $\mathbb{T}^H, \mathbb{G}^H,\KK^H$, respectively by the maps $\alpha, \beta,$ and $\gamma$. This process preserves dg-ness \cite{GMP}.

Since $\alpha$ is a surjection, we may define the map \[\varphi^H: \mathbb{T}^H \to \KK^\mathcal{C}\] by lifting $Q/I(H) \onto Q/\la x \mid x \in \mathcal{C}\ra$ so that $\varphi^H \circ\alpha = \gamma \circ\varphi^G$. Similarly, using the projection map $\pi^H: \mathbb{T}^H \to \mathbb{G}^H$, we may define the map \[ \tau^H : \mathbb{G}^H \to \KK^\mathcal{C}\] so that $\tau^H \circ\pi^H = \varphi^H$. To show $\tau^H$ is a dg morphism, it suffices to show $\varphi^H$ is a dg morphism and the following diagram commutes (Lemma \ref{Lem:tau-dgcondition}).

\begin{figure}[H]
\begin{tikzcd}
                                        & \mathbb{T}^G \arrow[ddd,"\pi^G"] \arrow[rrrddd,"\varphi^G"] \arrow[ld,"\alpha"] &  &              &                         \\
\mathbb{T}^H \arrow[ddd,"\pi^H"] \arrow[rrrddd, crossing over, "\varphi^H"] &                                                    &  &              &                         \\
                                        &                                                    &  &              &                         \\
                                        & \mathbb{G}^G \arrow[ld,"\beta"] \arrow[rrr,"\tau^G"]                &  &              & \mathbb{K}^\mathcal{D} \arrow[ld,"\gamma"] \\
\mathbb{G}^H \arrow[rrr,"\tau^H"]                &                                                    &  & \mathbb{K}^\mathcal{C} &                        
\end{tikzcd}
\caption{Pruning $\tau^G$ to $\tau^H$}
\label{fig:cheesewedge}
\end{figure}
The fact that $\varphi^H$ is an algebra map follows from the fact that $\varphi^G$ is an algebra map and also from the fact that the pruning maps are dg morphisms:
\begin{align*}
    \varphi^H(\overline{g_1g_2}) = \overline{\varphi^G(g_1g_2)} = \overline{\varphi^G(g_1)\varphi^G(g_2)} = \overline{\varphi^G(g_1)}\ \overline{\varphi^G(g_2)} = \varphi^H(\overline{g_1})\varphi^H(\overline{g_2}).
\end{align*}

To show $\varphi^H$ is a chain map, we utilize the right cancellation property for surjections. Indeed,
\begin{align*}
\varphi^H \circ \partial^{\mathbb{T}^H}\circ \alpha &= \varphi^H \circ \alpha \circ \partial^{\mathbb{T}^G}\\ &= \gamma \circ \varphi^G \circ \partial^{\mathbb{T}^G}\\
&=\gamma \circ \partial^{\KK^G} \circ \varphi^G \\
&= \partial^{\KK^H}\circ \gamma \circ \varphi^G\\
&= \partial^{\KK^H}\circ \varphi^H \circ \alpha.
\end{align*}
Thus, $\varphi^H \circ \partial^{\mathbb{T}^H} = \partial^{\KK^H} \circ\varphi^H $, which completes the proof that $\varphi^H$ is a dg morphism.

To show the rest of the diagram in Figure \ref{fig:cheesewedge} commutes, we begin by referring to \cite[Theorem 5.7, Step 3]{GMP} which shows $\alpha \circ \pi^H = \pi^G \circ \beta$ by the nature of pruning. Finally, consider
\[
    \tau^H \circ \beta \circ \pi^G = \tau^H \circ \pi^H \circ \alpha
    = \varphi^H \circ \alpha 
    = \gamma \circ \varphi^G  = \gamma \circ \tau^G \circ \pi^G.
\]
Since $\pi^G$ is a surjection, we right cancel to get $\tau^H \circ \beta = \gamma \circ \tau^G$. 
\end{proof}

\begin{remark}
    Example \ref{ex:C_5nogood} shows there is no dg morphism $\tau^{C_5}$ from the minimal free resolution $\mathbb{G}^{c_5}$ of $Q/I(C_5)$ to the Koszul complex $\KK^\mathcal{C}$ on any vertex cover $\mathcal{C}$ of $C_5$. By Proposition \ref{prop:Hereditarydgtau}, any graph $G$ that has $C_5$ as an induced subgraph cannot have a dg morphism $\tau^G$ from the minimal free resolution $\mathbb{G}^G$ of $Q/I(G)$ to the  Koszul complex $\KK^\mathcal{D}$ on any vertex cover $\mathcal{D}$ of $G$.
\end{remark}

\subsection{Iterative construction of DG graphs}\label{iterativeconstructionsection}

We now apply the construction in Section \ref{suspensionsection} to produce infinitely many graphs whose edge ideals are minimally resolved by dg algebras.  In particular, threshold graphs fall into this category.  

\begin{thm}\label{iterate}
    Let $\{G^i\}_{i\geq 0}$ be a family of finite simple graphs.  For each $i$, let $\mathcal{C}^i$ be a vertex cover of $G^i$.  Suppose
\begin{enumerate}
    \item $V(G^0)$ is nonempty 
    \item $D(G^0,\mathcal{C}^0)$ has no oriented cycles
    \item $V(G^i) \cup \{z_{i+1}\} \subseteq V(G^{i + 1})$
    \item $\mathcal{C}^i \subseteq \mathcal{C}^{i+1}$
    \item $E(G^{i+1}) = E(G_i) \cup \{z_{n+1}x : x \in \mathcal{C}^i \}$
\end{enumerate}
Let $\KK^i$ be the Koszul complex on the vertex cover $\mathcal{C}^i$.  If the minimal free resolution $\GG^0$ of $Q/I(G^0)$ is a dg algebra and the map $\tau^{0}:\GG^0 \to \KK^0$ lifting the surjection $Q/I(G^0) \onto Q/\la x \mid x \in \mathcal{C}^0 \ra$ is a dg morphism, then the minimal free resolution $\GG^i$ of $Q/I(G^i)$ is a dg algebra for all $i \geq 0$.  
\end{thm}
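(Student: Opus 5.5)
We argue by induction on $i$, proving a stronger statement: for every $i\ge 0$, (a) $\GG^i$ is a dg algebra, (b) $D(G^i,\mathcal{C}^i)$ has no oriented cycles, and (c) the map $\tau^i:\GG^i\to\KK^i$ of Lemma \ref{tau} (with respect to suitable cover maps $\fX^i$ and compatible total orders) is a dg morphism. The base case is exactly the hypothesis. For the inductive step, $G^{i+1}=(G^i)^{\mathcal{C}^i}$ is the suspension of $z_{i+1}$ over $\mathcal{C}^i$. We read hypothesis (3) and (5) as saying that the only new vertex is $z_{i+1}$ and that the index $z_{n+1}$ in (5) means $z_{i+1}$. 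By (b) and Theorem \ref{suspensionres}, $\Cone{\bar\beta}$ is the minimal free resolution of $Q/I(G^{i+1})$. By (c), it is a dg algebra, which gives (a) for $i+1$.

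For (b) at stage $i+1$, we extend the cover map by setting $\fX^{i+1}(z_{i+1}x)=x$ for $x\in\mathcal{C}^i$, which is allowed since $x\in\mathcal{C}^i\subseteq\mathcal{C}^{i+1}$. The new arcs of $D(G^{i+1},\mathcal{C}^{i+1})$ then all point away from $z_{i+1}$. Since $z_{i+1}$ has no incoming arcs, it lies on no oriented cycle, and every other arc is old. Hence no oriented cycle exists. We extend the total order on edges by placing each new edge $z_{i+1}x$ according to the position of $x$ in $<_1$. This keeps the order compatible with $\fX^{i+1}$, as required by Lemma \ref{phi}.

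The main obstacle is (c) at stage $i+1$. By Lemma \ref{Lem:tau-dgcondition}, we must show that $\mathbb{J}^{i+1}=\ker(\mathbb{T}^{i+1}\onto\GG^{i+1})$ lies in $\ker\varphi^{i+1}$, where Theorem \ref{structurethm} identifies $\GG^{i+1}=\Cone{\bar\beta}$ with a quotient of the Taylor resolution. We plan to describe $\Cone{\bar\beta}$ explicitly on Taylor symbols. A Taylor basis element $g_{W\cup W_z}$ of $\mathbb{T}^{i+1}$, with $W\subseteq E(G^i)$ and $W_z$ a set of new edges, corresponds to one of three things: a basis element of $\GG^i$ if $W_z=\emptyset$; an element of $\FF$, namely $f_{W_z}$, if $W=\emptyset$; or a product term coming from $\GG^i_{\ge1}\otimes\FF$, namely $g_W f_{W_z}$, in the shifted cone. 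The kernel $\mathbb{J}^{i+1}$ is then generated as a dg ideal by $\mathbb{J}^i$ together with the elements $g_Wf_{W_z}$ that die in the cone. These are essentially the products in which $\alpha$ collapses, i.e.\ those with $\tau^i(g_W)=0$ or with $\fX(W)\cap\fX(W_z)\ne\emptyset$.

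Then $\varphi^{i+1}$ kills $\mathbb{J}^i$ by the inductive hypothesis. Since $\varphi^{i+1}$ is multiplicative (Lemma \ref{phi}), it also kills any product $g_Wf_{W_z}$ whose image involves $\tau^i(g_W)=0$. Finally, it kills products with overlapping $\fX$-images, because of exterior-algebra vanishing in $\KK^{i+1}$. The step we expect to be most delicate is identifying $\mathbb{J}^{i+1}$ precisely. As a fallback, we would construct $\tau^{i+1}$ directly on $\Cone{\bar\beta}$ as $(a,m)\mapsto \tau^i(a)+\mu^{z}$-type terms, using the twisted multiplication of Theorem \ref{thm: dgmodule M}. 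We would then check multiplicativity on generators directly, using that $\tau^i$ is a dg morphism and that $\mu$ lands in the first component.
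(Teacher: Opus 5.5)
Your parts (a) and (b) are fine and agree with the paper: orienting every new edge away from $z_{i+1}$ is exactly how the paper propagates minimality. The gap is in your main route to (c). Your dictionary between Taylor symbols and the cone is wrong, so your description of $\mathbb{J}^{i+1}$ fails in both directions.

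For $k\ge 2$ one has $\Cone{\bar\beta}_k=\GG^i_k\oplus\FF_k\oplus\GG^i_{k-1}$. So the mixed part is a single copy of $\GG^i$ shifted by one, not $\GG^i_{\ge 1}\otimes\FF$. It is realized precisely by products of the kind you place in the kernel. Write $z=z_{i+1}$ and let $w=xy\in E(G^i)$ with $\fX(w)=x$. Then $\alpha(g_w)=y f_{zx}$, and the action of Lemma \ref{newmodstructure} gives $g_w\cdot f_{zx}=(-\tfrac{y}{z}f_{zx}f_{zx},\,-x g_w)=(0,-x g_w)$. Hence $(g_w,0)(0,f_{zx})=x\,(0,(0,g_w))\neq 0$. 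Under your own identification of the degree-one symbols with $(g_w,0)$ and $(0,f_{zx})$, the Taylor product $g_w f_{zx}=\pm x\,g_{\{w,zx\}}$ therefore does not lie in $\mathbb{J}^{i+1}$, even though $\fX(\{w\})\cap\fX(\{zx\})\neq\emptyset$.

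Conversely, suppose $x,y\in\mathcal{C}^i$, as happens for threshold graphs with $\mathcal{C}$ the full vertex set. Then $g_{\{w,zx,zy\}}\in\mathbb{J}^{i+1}$, because the cone has no basis element in homological degree $3$ whose multidegree divides $zxy$. But none of your generators has multidegree dividing $zxy$: $\mathbb{J}^i$ vanishes in multidegrees dividing $xy$, and each product $g_W f_{W_z}$ with $W,W_z\neq\emptyset$ has multidegree of total degree at least $4$. So this element is not in the dg ideal you describe. Theorem \ref{structurethm} only asserts that some dg surjection $\mathbb{T}^{i+1}\onto\Cone{\bar\beta}$ exists, so you have no usable description of $\mathbb{J}^{i+1}$, and Lemma \ref{Lem:tau-dgcondition} cannot be applied as you plan.

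Your fallback is the route the paper actually takes, and the proof lives there. What is missing is the map that makes it work. The paper defines $\iota^i:\FF^i\to\KK^i$ by $1\mapsto 1$ and $f_{zV}\mapsto z\,e_V$, where $zV=\{zx : x\in V\}$ for $V\subseteq\mathcal{C}^i$. Note the direction $\FF^i\to\KK^i$, opposite to $\mu^z$. The paper checks that $\iota^i$ is a dg algebra map with $\iota^i\alpha^i=z\tau^i$. It then sets $\tau^{i+1}(a,(b,c))=\tau^i(a)+\iota^i(b)$, and $\tau^{i+1}(a,m)=\tau^i(a)+\iota^i(m)$ for $m\in M_0$. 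This sends the shifted copy of $\GG^i$, where the products $g_w f_{zx}$ above land, to $0$.

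The chain-map check uses $\iota^i\alpha^i(c)=z\tau^i(c)$ to cancel the term $-z\tau^i(c)$ coming from $\bar\beta$. Multiplicativity must be checked on all pairs of homogeneous elements, since neither $\GG^i$ nor the cone need be generated in degree one. In that check, the twisted-action terms become $\iota^i(\tfrac{1}{z}b\,\alpha^i(f))=\iota^i(b)\tau^i(f)$, and the $\mu$-term contributes $\iota^i(b)\iota^i(d)$. The positive-degree coefficients of $\tau^{i+1}$ are those of $\tau^i$ or $z$, so minimality also propagates. Finally, hypothesis (3) allows $G^{i+1}$ to have extra isolated vertices; these are harmless, but your reading excludes them.
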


\begin{proof} 
We begin by fixing the following notation for resolutions. For $n \geq 0$, let
\begin{enumerate}
    \item $\GG^i$ be the minimal free resolution of $Q/I(G^i)$
    \item $\F^{i} = \mathbb{T}(z_{i+1} \mathcal{C}^i)$ be the Taylor resolution on the edges $z_{i+1} \mathcal{C}^i:= \{z_{i+1}x : x\in \mathcal{C}^i \}$.
\end{enumerate}

If $\GG^i$ is a dg algebra and\[\tau^i: \GG^i \to \KK^i\]
as defined in Lemma \ref{tau} is a dg morphism, an application of Lemma \ref{newmodstructure} and Theorem \ref{suspensionres} shows that $\text{Cone}(\overline{\beta}) \cong \GG^{i+1}$ is a dg algebra. Thus, it suffices to show that one may construct a dg morphism $\tau^{i+1}: \GG^{i+1} \to \KK^{i+1}$ from $\tau^i$. First, consider the map
\begin{align*}
    \iota^{i}: \F^{i} &\longrightarrow \KK^{i} \\
        1 &\longmapsto 1 \\
        f_{z_{i+1}x_{j_1},\ldots,z_{i+1}x_{j_k}} &\longmapsto z_{i+1}e_{x_{j_1},\ldots,{x_{j_k}}}
\end{align*}

It is easy to show $\iota^i$ is an algebra map. In the case that $x_{j_s}\neq x_{\ell_t}$ for all $s,t$, the image of the product
\[
f_{z_{i+1}x_{j_1},\ldots,z_{i+1}x_{j_k}} \cdot f_{z_{i+1}x_{\ell_1},\ldots,z_{i+1}x_{\ell_m}}
\]
under $\iota^i$ is
\[
(-1)^\sigma z_{i+1}^2e_{x_{j_1},\ldots,x_{j_k},x_{\ell_1},\ldots,x_{\ell_m}},
\]
and
\begin{align*}
\iota^i(f_{z_{i+1}x_{j_1},\ldots,z_{i+1}x_{j_k}}) \cdot \iota^i(f_{z_{i+1}x_{\ell_1},\ldots,z_{i+1}x_{\ell_m}})&=z_{i+1}^2e_{x_{j_1},\ldots,x_{j_k}}\cdot e_{x_{\ell_1},\ldots,x_{\ell_m}}\\
&=(-1)^\sigma z_{i+1}^2e_{x_{j_1},\ldots,x_{j_k},x_{\ell_1},\ldots,x_{\ell_m}},
\end{align*}
and when there exist $s,t$ with $x_{j_s}=x_{\ell_t}$, 
\[
\iota^i(f_{z_{i+1}x_{j_1},\ldots,z_{i+1}x_{j_k}}\cdot f_{z_{i+1}x_{\ell_1},\ldots,z_{i+1}x_{\ell_m}})=0=\iota^i(f_{z_{i+1}x_{j_1},\ldots,z_{i+1}x_{j_k}}) \cdot \iota^i(f_{z_{i+1}x_{\ell_1},\ldots,z_{i+1}x_{\ell_m}}).
\]

Above, $\sigma=\sigma(V,W)$ with $V=\{z_{i+1}x_{j_1},\ldots,z_{i+1}x_{j_k}\}$ and $W=\{z_{i+1}x_{\ell_1},\ldots,z_{i+1}x_{\ell_m}\}$ (by earlier choice of the total order $<$, the $z_{i+1}$ may be omitted everywhere in $V$ and $W$ without the sign changing).  

To see $\iota^i$ is a chain map, we let  $V = \{z_{i+1}x_{j_1},\ldots,z_{i+1}x_{j_k}\}$ and $V' = \{x_{j_1},\ldots,x_{j_k}\}$ and consider 
\[
\partial^{\KK^i}\iota^i(f_{V}) = \partial^{\KK^i} \left( z_{i+1}e_{V'}\right) = z_{i+1}\sum_{t =1}^{k}(-1)^{t-1}x_{j_t}e_{V'\smallsetminus\{x_{j_t}\}}.
\]
On the other hand
\begin{align*}
\iota^i \partial^{\FF^i}(f_{V}) &= \iota^i\left( \sum_{t=1}^{k}\dfrac{z_{i+1}\lcm(V')}{z_{i+1}\lcm(V'\smallsetminus\{x_{j_t}\})}f_{V\smallsetminus\{x_{j_t}\}}\right)\\
&= \iota^i \left( \sum_{t=1}^{k} x_{j_t}f_{V\smallsetminus\{x_{j_t}\}}\right)\\
= &
z_{i+1}\sum_{t =1}^{k}(-1)^{t-1}x_{j_t}e_{V'\smallsetminus\{x_{j_t}\}},
\end{align*}
showing $\iota^i$ is a chain map and thus a dg morphism.

Now define the new graph $\tilde{G}^i$ as the suspension of $z_{i+1}$ over the vertex cover $\mathcal{C}^i$ of $G^i$, and let $\tilde{\GG}^i$ be the minimal free resolution of $Q/I(\tilde{G}^i)$. Define 
    \begin{align*}
    \tilde\tau^{i}: \tilde{\GG}^{i} &\longrightarrow \KK^{i} \\
        (a,m) & \mapsto \begin{cases} \tau^i(a) + \iota^i(b), & m = (b,c) \in M_{\geq 1}^{\natural}\\
        \tau^i(a)+\iota^i(m), & m\in M_0=B_1=\mathbb{F}^i_1
            \end{cases}
\end{align*}
for $a\in A_{\geq 1}^{\natural}=\left(\GG^i\right)_{\geq 1}^{\natural}$.  Here, $M$ is $\Cone{\alpha^i_{\geq 1}}[1]$, where $\alpha^i:\mathbb{G}^i \to \FF^i$ is the composition of $\tau^i$ and $\mu^z$ (see Lemma \ref{mulem}).
Let $G^{i+1}$ be the graph obtained from $\tilde{G}^i$ by introducing any number of isolated vertices. Since isolated vertices do not contribute to the edge ideal of the graph and adding an isolated vertex corresponds to tensoring to a larger ring (which preserves dg structure), we have $\tilde{\GG}^{i} \cong \GG^{i+1}$ and can define
   $ \tau^{i+1}: \GG^{i+1} \to \KK^{i+1} $ by setting $\tau^{i+1} = \tilde\tau^i$, where the images of both $\tau^i$ and $\iota^{i}$ are understood to be their images under the inclusion $\KK^i \subseteq \KK^{i+1}$.

    Under the assumption that $\tau^{i}$ is a dg morphism, we show that $\tau^{i+1}$ is a dg morphism.   
We first note that the following square commutes, since $I(G^{i+1}) \subseteq \langle x \mid x\in \mathcal{C}^{i+1}\rangle $:
 \[
\begin{tikzcd}
\mathbb{G}^{i+1}_0 = \mathbb{F}^i_{0}\oplus \mathbb{G}^i_{0} \arrow[r, two heads] \arrow[d, "\iota^i_{0}=\text{id}"] & Q/I(G^{i+1}) \arrow[d, two heads] \\
\mathbb{K}^i_0 \arrow[r, two heads]                        & Q/\langle x \mid x \in \mathcal{C}^{i+1} \rangle        
\end{tikzcd}
    \]

    Since both $\tau^{i}$ and $\iota^{i}$ are both chain maps, paired with the fact $\iota^i \alpha = z \tau^i$, it is easy to see that $\tau^{i+1}$ is also a chain map. Let $a\in A_{\geq1}^{\natural}$ and $m = (b,c) \in M_{\geq 1}^{\natural}$, then
    \begin{align*}
        \tau^{i+1}\partial^{\text{Cone}(\overline{\beta})}(a,m) &= \tau^{i+1}\left(\partial^{\GG}(a)+\overline{\beta}(m), -\partial^M(m)\right) \\
        &= (\tau^{i}+\iota^i)\left(\partial^{\GG}(a)-zc, \left(\partial^{\FF}(b)+\alpha(c), -\partial^{\GG}(c)\right) \right)\\
        &= \tau^i\left( \partial^{\GG}(a)\right) - z\tau^i(c) + \iota^i\left(\partial^{\FF}(b)\right) + \iota^i(\alpha(c))\\
        &=\tau^i\left( \partial^{\GG}(a)\right) - z\tau^i(c) + \iota^i\left(\partial^{\FF}(b)\right) + z\tau^i(c)\\
        &=\tau^i\left( \partial^{\GG}(a)\right)+\iota^i\left(\partial^{\FF}(b)\right)\\
        & = \partial^{\KK^i}\tau^{i}(a) + \partial^{\KK^i}\iota^{i}(b)\\
        & = \partial^{\KK^i}\tau^{i+1}(a,m).
    \end{align*}

When $m \in M_0$, one has
\begin{align*}
    \tau^{i+1}\partial^{\text{Cone}(\overline{\beta})}(a,m) &= \tau^{i+1}\left(\partial^{\GG}(a)+\overline{\beta}(m), -\partial^M(m)\right) \\
    &= \left(\tau^i + \iota^i \right)\left(\partial^{\GG}(a)+\partial^{\FF}(m), 0\right)\\
    &= \tau^i(\partial^{\GG}(a))+\tau^i(\partial^{\FF}(m))\\
    &= \tau^i(\partial^{\GG}(a))+\iota^i(\partial^{\FF}(m))\\
    & = \partial^{\KK^i}\tau^{i}(a) + \partial^{\KK^i}\iota^{i}(m)\\
    &=\partial^{\KK^i}\tau^{i+1}(a,m).
\end{align*}

 To prove that $\tau^{i+1}$ is an algebra map, we consider the following cases:

\noindent CASE 1: $m = (b,c), n=(d,e) \in M_{\geq 1}^{\natural} $ and $a,f \in A_{\geq 1}^{\natural}$
\begin{align*}
\tau^{i+1}((a,m)(f,n)) &= \tau^{i+1}\left( af, mf + (-1)^{|a|}an + (-1)^{|m|}\mu(m \otimes n)\right)\\
&= \tau^{i+1}\left(af, \left( \frac{1}{z}b\alpha(f),cf\right) + (-1)^{|a|}\left((-1)^{|a|}\frac{1}{z}\alpha(a)d,ae \right) + (bd,0)\right)\\
&= \tau^{i}(af) + \iota^i\left(\frac{1}{z}b\alpha(f) +\frac{1}{z}\alpha(a)d + bd \right)\\
&= \tau^{i}(af) + \iota^i\left(\frac{1}{z}b\alpha(f)\right)+\iota^i\left(\frac{1}{z}\alpha(a)d\right)+\iota^i(bd)\\
& = \tau^{i}(af) + \frac{1}{z}\iota^i(b) \iota^i(\alpha(f)) +\frac{1}{z}\iota^i(\alpha(a))\iota^i(d)+\iota^i(bd)\\
& = \tau^{i}(a)\tau^{i}(f)+\iota^i(b)\tau^{i}(f) + \tau^{i}(a)\iota^i(d)+\iota^i(b)\iota^i(d)\\
&= (\tau^{i}(a)+\iota^{i}(b))(\tau^{i}(f)+\iota^{i}(d))\\
    &= \tau^{i+1}(a,m)\tau^{i+1}(f,n)
\end{align*}

\noindent CASE 2: $m \in M_0$, $n=(d,e) \in M_{\geq 1}^{\natural}$, and $a,f \in A_{\geq 1}^{\natural}$
\begin{align*}
\tau^{i+1}((a,m)(f,n)) &= \tau^{i+1}\left( af, mf + (-1)^{|a|}an + (-1)^{|m|}\mu(m \otimes n)\right)\\
&= \tau^{i+1}\left(af, \left( \frac{1}{z}m\alpha(f),-\frac{1}{z}f\partial^B(m)\right) + (-1)^{|a|}\left((-1)^{|a|}\frac{1}{z}\alpha(a)d,ae \right) + (md,0)\right)\\
&= \tau^{i}(af) + \iota^i\left(\frac{1}{z}m\alpha(f) +\frac{1}{z}\alpha(a)d + md \right)\\
&= \tau^{i}(af) + \iota^i\left(\frac{1}{z}m\alpha(f)\right)+\iota^i\left(\frac{1}{z}\alpha(a)d\right)+\iota^i(md)\\
& = \tau^{i}(af) + \frac{1}{z}\iota^i(m) \iota^i(\alpha(f)) +\frac{1}{z}\iota^i(\alpha(a))\iota^i(d)+\iota^i(md)\\
& = \tau^{i}(a)\tau^{i}(f)+\iota^i(m)\tau^{i}(f) + \tau^{i}(a)\iota^i(d)+\iota^i(m)\iota^i(d)\\
&= (\tau^{i}(a)+\iota^{i}(m))(\tau^{i}(f)+\iota^{i}(d))\\
    &= \tau^{i+1}(a,m)\tau^{i+1}(f,n)
\end{align*}

\noindent CASE 3:  $m=(b,c) \in M_{\geq 1}^{\natural}$, $n \in M_0$, and $a,f \in A_{\geq 1}^{\natural}$
\begin{align*}
\tau^{i+1}((a,m)(f,n)) &= \tau^{i+1}\left( af, mf + (-1)^{|a|}an + (-1)^{|m|}\mu(m \otimes n)\right)\\
&= \tau^{i+1}\left(af, \left( \frac{1}{z}b\alpha(f),cf\right) + (-1)^{|a|}\left((-1)^{|a|}\frac{1}{z}\alpha(a)n,-\frac{1}{z}a \partial^B(n) \right) + (bn,0)\right)\\
&= \tau^{i}(af) + \iota^i\left(\frac{1}{z}b\alpha(f) +\frac{1}{z}\alpha(a)n + bn \right)\\
&= \tau^{i}(af) + \iota^i\left(\frac{1}{z}b\alpha(f)\right)+\iota^i\left(\frac{1}{z}\alpha(a)n\right)+\iota^i(bn)\\
& = \tau^{i}(af) + \frac{1}{z}\iota^i(b) \iota^i(\alpha(f)) +\frac{1}{z}\iota^i(\alpha(a))\iota^i(n)+\iota^i(bn)\\
& = \tau^{i}(a)\tau^{i}(f)+\iota^i(b)\tau^{i}(f) + \tau^{i}(a)\iota^i(n)+\iota^i(b)\iota^i(n)\\
&= (\tau^{i}(a)+\iota^{i}(b))(\tau^{i}(f)+\iota^{i}(n))\\
    &= \tau^{i+1}(a,m)\tau^{i+1}(f,n)
\end{align*}

CASE 4: $m, n \in M_0$ and $a,f \in A_{\geq 1}^{\natural}$
\begin{align*}
\tau^{i+1}((a,m)(f,n)) &= \tau^{i+1}\left( af, mf + (-1)^{|a|}an + (-1)^{|m|}\mu(m \otimes n)\right)\\
&= \tau^{i+1}\left(af, \left( \frac{1}{z}m\alpha(f),-\frac{1}{z}f\partial^B(m)\right) + \left(\frac{1}{z}\alpha(a)n,-(-1)^{|a|}\frac{1}{z}a \partial^B(n) \right) + (mn,0)\right)\\
&= \tau^{i}(af) + \iota^i\left(\frac{1}{z}m\alpha(f) +\frac{1}{z}\alpha(a)n + mn \right)\\
&= \tau^{i}(af) + \iota^i\left(\frac{1}{z}m\alpha(f)\right)+\iota^i\left(\frac{1}{z}\alpha(a)n\right)+\iota^i(mn)\\
& = \tau^{i}(af) + \frac{1}{z}\iota^i(m) \iota^i(\alpha(f)) +\frac{1}{z}\iota^i(\alpha(a))\iota^i(n)+\iota^i(mn)\\
& = \tau^{i}(a)\tau^{i}(f)+\iota^i(m)\tau^{i}(f) + \tau^{i}(a)\iota^i(n)+\iota^i(m)\iota^i(n)\\
&= (\tau^{i}(a)+\iota^{i}(m))(\tau^{i}(f)+\iota^{i}(n))\\
    &= \tau^{i+1}(a,m)\tau^{i+1}(f,n)
\end{align*}
    Thus, $\tau^{i+1}$ is a dg morphism.  Minimality follows from the facts that $D(G^0,\mathcal{C}^0)$ has no oriented cycles (see Lemma \ref{mintau}) and that if $\tau^i$ always produces coefficients, then so does $\tau^{i+1}$: the edges involving the new vertex $z_{i+1}$ are always oriented away from $z_{i+1}$, and so no new directed cycle could be created in the suspension.
\end{proof}

As a final point, we note that the above iterative construction allows us to conclude that threshold graphs have edge ideals minimally resolved by dg algebras.  A \textbf{threshold graph} is a graph obtained from the empty graph by performing a sequence made up of the following two operations:
\begin{itemize}
    \item[1)] introduce an isolated vertex to the current graph $G$, obtaining $G_*$,
    \item[2)] suspend a new vertex over the entire vertex set of the current graph $G$, obtaining $G^*$.
\end{itemize}

\begin{ex}
    The threshold graph $\phantom{}_{*\,*\,\,\,*}^{\,\,\,\,\,\,\,*\,\,\,*}$ is given in Figure \ref{thresholdex}, in which the vertices are labeled in the order in which they were introduced.
    \begin{figure}[h]
        \centering
\begin{tikzpicture}[scale=1.25,
    every node/.style={
        circle,
        draw,
        fill=white,
        inner sep=0.8pt,
        font=\scriptsize
    }]

\node (v1) at (0,0) {$1$};
\node (v2) at (1,0) {$2$};
\node (v3) at (0,-1) {$3$};
\node (v4) at (1,-1) {$4$};
\node (v5) at (0.5,-1.5) {$5$};

\draw (v1) -- (v3);
\draw (v2) -- (v3);
\draw (v3) -- (v5);
\draw (v4) -- (v5);
\draw (v2) -- (v5);
\draw (v1) -- (v5);

\end{tikzpicture}
        \caption{A threshold graph}
        \label{thresholdex}
    \end{figure}
\end{ex}

\begin{cor}\label{threshold}
    If $G$ is a threshold graph, then the minimal free resolution of $Q/I(G)$ admits the structure of a dg algebra.
\end{cor}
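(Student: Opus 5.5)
We will derive the statement from Theorem \ref{iterate}, after reducing the threshold-graph operations to suspension steps over a vertex cover. Every threshold graph $G$ is built from the empty graph by a word in the operations $*$ (add an isolated vertex) and $\,^*$ (suspend a new vertex over all current vertices). Adding isolated vertices only tensors the resolution up to a larger polynomial ring, which preserves the dg algebra structure. So we may assume the word is nonempty and that its first suspension is preceded by at least one isolated-vertex step; otherwise $G$ has no edges and $Q/I(G)=Q$ is trivially resolved by a dg algebra. The edges of $G$ then come only from suspensions. Let $G^0$ be the graph on the vertices introduced before the first suspension, which is edgeless with nonempty vertex set, and let $\mathcal{C}^0=V(G^0)$.

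We check the base hypotheses of Theorem \ref{iterate}. Since $E(G^0)=\emptyset$, the graph $D(G^0,\mathcal{C}^0)$ has no edges and hence no oriented cycles. The resolution $\GG^0$ of $Q/I(G^0)=Q/0$ is just $Q$ in degree $0$, which is trivially a dg algebra. The map $\tau^0\colon \GG^0\to\KK^0$ is $1\mapsto 1$, which is an algebra map and a chain map lifting $Q\onto Q/\la x\mid x\in\mathcal{C}^0\ra$, so it is a dg morphism. One should note that $\mathcal{C}^0$ is a vertex cover vacuously. The cone constructions of Section \ref{suspensionsection} still make sense here: when $\GG^0_{\geq 1}=0$, the complex $M$ is just the shifted Taylor complex $\FF$, and the first step produces the star graph whose ideal is resolved by the Taylor complex $\FF$ together with $\GG^0=Q$.

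For the inductive steps, let $G^{i+1}$ be obtained from $G^i$ by suspending $z_{i+1}$ over $\mathcal{C}^i:=V(G^i)$ and then adding the isolated vertices that appear in the word before the next suspension. We set $\mathcal{C}^{i+1}=V(G^{i+1})$, which is a vertex cover containing $\mathcal{C}^i$. Hypotheses (3)--(5) of Theorem \ref{iterate} then hold by construction. That theorem's proof explicitly allows adding arbitrarily many isolated vertices at each stage, which absorbs the $*$-operations between suspensions. If the word ends with isolated vertices, these are handled by the tensoring remark above. Hence $G$ equals some $G^i$, or is such a graph plus isolated vertices, and its minimal resolution is a dg algebra.

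The main obstacle is ensuring that the suspension at each stage is over the full vertex set, including isolated vertices added earlier. This is consistent with the iteration because $\mathcal{C}^i=V(G^i)$ contains all of them. A second point to confirm is the degenerate base case $E(G^0)=\emptyset$, where Lemma \ref{tau} and Lemma \ref{middleconeres} are applied with $\GG^0_{\geq1}=0$; there I would check directly that the cone is the Taylor resolution of $\la z_1x\mid x\in\mathcal{C}^0\ra$. Minimality throughout comes from Lemma \ref{mintau}, since every edge $z_{i+1}x$ is oriented away from $z_{i+1}$ and so no oriented cycle can ever form.
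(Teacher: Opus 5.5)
Your proposal is correct and is essentially the paper's proof: start Theorem \ref{iterate} from the edgeless graph $G^0$ with $\mathcal{C}^0=V(G^0)$, where $\tau^0$ is trivially a dg morphism (the paper puts it as $\tau$ being zero in positive degrees), and take $\mathcal{C}^i=V(G^i)$ at every later stage. The one slip is your claim that $G$ is edgeless if the word begins with a suspension. It need not be, but a suspension over the empty graph just introduces an isolated vertex, so you can relabel it as a $*$-step and your argument goes through unchanged.
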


\begin{proof}
    By Theorem \ref{iterate}, it suffices to show that if $G$ is a collection of isolated vertices and $\mathcal{C}$ is the entire vertex set of $G$, then $\tau^G$ is a dg morphism.  This is trivially true, since $\tau^G$ is the zero map in positive degrees.  
\end{proof}

\printbibliography

\end{document}